\newtheorem{que}{Question}
\newenvironment{conjecture}[1]{\par\bigskip\noindent{\bf Conjecture }{{\bf (}#1{\bf )}}\begin{it}}{\end{it}\bigskip}
\newcommand{\tto}{\Rightarrow}
\newcommand{\dok}{\vdash^*}%{\stackrel{*}{\vdash}}
\title{Incompleteness in the finite domain}
\author{Pavel Pudl\'ak
\thanks{The author is supported by the ERC Advanced Grant
  339691 (FEALORA) and the institute grant RVO: 67985840. Part of this article was written while the author was visiting Simons Insitute in Berkeley, California.}}
\begin{document}
\maketitle

\begin{abstract}
Motivated by the problem of finding finite versions of classical incompleteness theorems, we present some conjectures that go beyond ${\bf NP\neq
  co NP}$. These conjectures formally connect computational complexity with the difficulty of proving some sentences, which means that high computational complexity of a problem associated with a sentence implies that the sentence is not provable in a weak theory, or requires a long proof. Another reason for putting forward these
conjectures is that some results in proof complexity seem to be special cases of such general statements and we want to formalize and fully understand these statements. In this paper we review some conjectures that we have presented earlier~\cite{KP,godel100,kniha,herbr}, introduce new conjectures, systematize them and prove new connections between them and some other statements studied before. 

\end{abstract}

\section{Introduction}

G\"odel's incompleteness theorem is undoubtedly one of the most
important theorems in logic. 
%This surprising result changed the
%way people were viewing the foundations of mathematics. 
It speaks about absolute provability, i.e., about
proofs without any restriction on their length. The question whether
there is a ``finite'' or ``feasible'' version of the incompleteness
theorem, where the complexity of proofs is bounded, has certainly intrigued many people, but very little has been 
published about it. With the advent of computers and theories
developed for them, in particular complexity theory, the question
about a finite version of the incompleteness theorem became even more
interesting. The concept of polynomial time computations turned out to
be the most important concept in complexity theory. The distinction between functions decidable in polynomial time and those computable only in exponential time plays a similar role as the distinction between computable and non-computable in computability theory. 
The successful use
of polynomial bounds suggested that one should also study which
theorems have polynomial length proofs. A natural version of
a finite incompleteness theorem was formulated by Harvey Friedman in~1979. Let
$Con_T(\bar n)$ be a natural formalization of the statement
\emph{``there is no derivation of contradiction of length $n$ from the
  axioms of $T$''}. Friedman proved a lower bound of the form $n^\epsilon$ for some $\epsilon>0$ and asked whether such sentences have proofs in
$T$ of polynomial length~\cite{friedman79}.%
\footnote{Here $\bar n$ denotes a \emph{binary numeral}, a term of length $O(\log n)$ that represents $n$.}
It turned out that the answer to his question is
yes~\cite{pudlak86}, but it is still possible, and seems very plausible, that for natural variations of this question
there are no polynomial length proofs. Namely, this should be true if
we ask about the lengths of proofs of $Con_T(\bar n)$ in a theory $S$ sufficiently
\emph{weaker} than~$T$. However, proving such a claim must be
extremely difficult, because it implies ${\bf P\neq NP}$ (and even more
than that).
% We have to face the fact that present-day mathematics lacks methods to prove ${\bf P\neq NP}$, and hence also to prove any conjecture that implies it, but there are a lot of other things that we can do. 

Our motivation for studying such problems is the fundamental question: \emph{what is the connection
  between logical strength of theories and computational complexity?}
which is basically what the field of \emph{proof complexity} is
about. Here we refer to proof complexity in a broader sense that also includes the study of first order theories called bounded arithmetic. Since there is a close connection between propositional proof systems and first order theories, we view these two concepts as nonuniform and uniform versions of the same concept.

%{We advocate the notion that bounded arithmetic is a part of
%proof complexity.} %Eng OK

To give an example of a connection between theories and computational complexity, let us consider Buss's Witnessing Theorem~\cite{buss86}. This theorem states that one can
construct polynomial time algorithms from proofs of certain sentences
in the theory $S^1_2$ (see Theorem~\ref{t-witnessing} below). This is an instance of a general phenomenon: if a theory is weak, the provably total functions have small computational complexity. 
Such theorems have been proven
for a number of other theories and complexity classes. Another
connection is the Feasible Interpolation Theorem of
Kraj\'{\i}\v{c}ek~\cite{krajicek97}. According to this theorem, one
can construct circuits from proofs of certain tautologies in various
proof systems, in particular, in resolution; the circuits separate two sets of Boolean vectors defined by the tautology.
%(Such theorems have beenproven also for other proof systems.)  
A high level form of these
results is that if something is provable in a weak formal system,
i.e., the logical strength of the system is bounded, we can give
bounds on some computational problems associated with the systems. If
we state it contrapositively it suggests that increasing strength of
logical formal systems is correlated with increasing complexity of the
associated computational tasks. Thus a more specific question is:
\emph{find general principles of which these results are special
  instances.}

Connection between
proofs and computations have been extensively studied in constructive mathematics in the context of intutionistic logic. There are also results that show interesting
connections of proofs in the intutionistic calculus with computational complexity. For example, Buss and
Mints~\cite{BussMints} proved that given an intuitionistic proof of a
disjunction $\phi\vee\psi$ in propositional logic (say, in the sequent
calculus), one can find \emph{in polynomial time} a proof of either
$\phi$ or $\psi$. A more general theorem, a version of a realizability theorem for intuitionistic propositional calculus, was proved in~\cite{buss-p}. 
However, the problems we are going to consider in this paper have not been studied in the context of intuitionistic logic and also in this paper we will only use classical logic.

%% . The reason is that in what we study here, the underlying
%% logic is not important. What is essential are proof systems and axioms
%% used. Considering intuitionistic logic would only be a restriction on
%% the systems we could use, while we are interested in formal
%% systems that are as general as possible.

The general principles that we study are connected with notoriously
open and probably very difficult problems in computational complexity
theory, so we cannot prove or disprove them with the currently
available means. They can only be stated as hypotheses or conjectures
without any formal supporting evidence. 
There are, essentially, two
reasons for stating some sentences as conjectures.
First, we believe that some basic theorems of proof theory should also hold true with suitable bounds on the lengths of proofs. The prime example is
the Second Incompleteness Theorem discussed above.
Second, some results in proof complexity and bounded arithmetic seem to follow a general pattern.
For example, as we noted above, polynomial time computations are
associated with the theory $S^1_2$ by a witnessing theorem. If we take
$S^2_2$, which we believe is a stronger theory, then the corresponding
function class is ${\bf P^{NP}}$,%
\footnote{This was also proved by Buss in~\cite{buss86}.} 
which we believe is a larger class
than {\bf P}.% 
\footnote{We are not able to prove it formally, because a 
formal proof would give us ${\bf P^{NP}\neq P}$, which is equivalent
to ${\bf NP\neq P}$.}  
The form of this result suggests that $S^2_2$ requires
more complex functions.

Alternatively, we can view the proposed conjectures as axioms. In fact, ${\bf NP\neq P}$ has been treated as an axiom ``to prove'' hardness of various problems. In many cases ${\bf NP\neq P}$ does not suffice and therefore a number of stronger hypotheses have been proposed. For example, the \emph{Exponential Time Hypothesis} of  Impagliazzo and Paturi~\cite{IP} and its variants are used to determine the time complexity of concrete polynomial time computable problems. In the theory of approximate algorithm several conjectures have been proposed in order to show nonapproximability of certain problems.

Although we have to treat the most interesting statements only as hypotheses, there are some interesting problems that we can study and
solve with the currently available means. These are problems about
relationships among various conjectures.  In particular, we would like
to know whether there is one general principle that would cover all
instances, or there is an infinite hierarchy. If there is a hierarchy, is it
linear, or does it branch? If it branches, is there a natural
classification of conjectures? We will address some questions of this
kind in this paper. Furthermore, one can study relativizations of
these conjectures. Several results about relativizations have been proven,
but much more is needed. We will mention some more concrete problems at the and of the paper.

We are primarily interested in these questions, because we want to
understand the essence of fundamental problems. 
%% enables us to view open problems in complexity theory from a different
%% perspective. 
However, there is also a practical aspect of this
research. The general conjectures suggest what specific problems in
proof complexity we should study. Then we can ``test'' the
conjectures on weak formal systems for which we do have means to prove
results connecting them with computational complexity. In fact the main Conjectures~{\sf CON}  and~{\sf TFNP} represent what researchers in proof complexity believe is likely to be true. %% Therefore it may be useful to collect these hypotheses and find out how they are related. 

 All conjectures that we consider in this paper state something about
 unprovability, although they often have a natural equivalent
 version stated in purely complexity-theoretical terms. The ``finite
 domain'' in the title refers to the fact that the lengths of
 computations and lengths of proofs of instances of the problems that
 we consider are at most exponential, hence there is a \emph{finite}
 bound on them. Perhaps, a more precise term would be ``exponential
 domain''. In previous presentations of this topic, in particular
 in~\cite{godel100,kniha}, we used the term \emph{``feasible incompleteness'',} which
 should be understood as \emph{``being incomplete with respect to feasible
 proofs''.} In~\cite{godel100,kniha} we also stated \emph{the feasible
   incompleteness thesis}, which is an informal statement saying that
 unprovability of a sentence in a weak formal system may be caused by high computational
 complexity of a computational problem naturally associated with the sentence.

\medskip
This paper is partly a survey, but a large part consists of new results, or results that have not been published with full proofs. Specifically, the main  conjectures were already presented in~\cite{kniha}, but connections between them were only sketched there. 
 
Here is how the paper is organized. After two introductory sections, in Section~\ref{sec3}, we recall a conjecture about finite consistencies and introduce a new conjecture about finite reflection principles. In Section~\ref{sec4} we present another important conjecture about total polynomial search problems. We discuss equivalent and stronger statements based on propositional proof systems and disjoint {\bf NP} and {\bf coNP} pairs of sets in Section~\ref{sec5}. We introduce a classification of conjectures in Section~\ref{sec6} and  show that uniform conjectures can be stated as statements about unprovability, which suggests a way towards general conjectures. Section~\ref{sec8} is about the role of reductions in the statements of conjectures. We conclude the paper with some
open problems.

%% we review some conjectures that we presented
%% earlier~\cite{KP,godel100,kniha,herbr} and introduce a new
%% Conjecture~\ref{c3} that naturally fits into our scheme. Some of the
%% conjectures are statements that had been studied before in
%% computational complexity. We compare our conjectures with some other
%% statements that had been studied before in computational complexity.
%% In the second half, we introduce a classification of the the
%% conjectures in order to fully understand what the statements have in
%% common and how they possibly can be generalized. Then we show that
%% most conjectures can be reformulated in terms of unprovability,
%% which may help find a general principle on which they are based. We
%% also briefly look at the role of concepts used in the statements, in
%% particular, the role of reductions. We conclude the paper with some
%% open problems.

\section{Preliminaries}\label{sec2}

\subsection{Theories}

In this paper we will use the word \emph{``theory''} for a set of axioms that is decidable in polynomial time (i.e., for each formula we can decide in polynomial time in the length of the formula whether or not it is an axiom). This implies that given a sentence $\phi$ and a string of symbols $d$, it is possible also to decide in polynomial time if $d$ is a proof of $\phi$.
Furthermore, we will only consider \emph{consistent arithmetical theories} that use a fixed finite set of function and relation symbols representing functions and relation on the natural numbers. We use fragments of arithmetic (as those theories are called), because one can easily refer to standard formalizations of basic syntactical concepts. Being able to formalize syntactical concepts, such as first order formulas and proofs, is the essential property of the theories that we need. 

Furthermore, we need that theories be \emph{sufficiently strong}, because we need the formalizations of basic properties of syntactical concepts and computations to to be provable in the theories we will use. As usual, we will ensure that a theory is sufficiently strong by assuming that it contains a particular fixed basis theory. We will use Buss's theory $S^1_2$ for this purpose. Theory $S^1_2$ is one of the fragments
of Bounded Arithmetic $S_2$ defined by Buss~\cite{buss86} (see also \cite{HP,krajicek95}). Formally, it is not a fragment of $PA$
(Peano Arithmetic), because it is formalized in a slightly richer
language, but it is interpretable in it. 

\bdf 
We denote by $\cal T$ the class of all {consistent} arithmetical theories that extend Buss's theory $S^1_2$ by a set of axioms that is decidable in polynomial time.
\edf

For lack of a good name, we will only use the symbol $\cal T$ to
denote this class of theories.

\subsection{Bounded arithmetic}

We will briefly describe $S^1_2$. It is very convenient to use $S^1_2$, but it should be noted that essentially all results and conjectures do not depend on the particular choice of the base theory. (This also concerns the formalization of the class ${\bf P}$; the particular formalization that we use is also not essential.) So the reader can safely skip this subsection if they are not interested in bounded arithmetic.

$S^1_2$ is a basic fragment of $S_2$ and it has similar relation to $S_2$ as $I\Sigma_1$ (Peano Arithmetic with induction restricted to $\Sigma_1$ formulas) to Peano Arithmetic. In $S_2$ (and so in $S^1_2$) the standard language of arithmetic is enriched by the symbols $\lfloor x/2\rfloor,|x|,x\# y$. The intended interpretation of $\lfloor x/2\rfloor$ is clear; this symbol is used in induction axioms. $|x|$ is the length of the binary representation of $x$ if $x>0$ and $|0|=0$. The interpretation of $x\# y$ is $2^{|x|.|y|}$. Note that we do not have exponentiation in $S_2$, so $\#$ has to be a primitive symbol. Also note that the length of the binary representation of  $2^{|m|.|n|}$ is roughly the product of the lengths of $m$ and $n$. One can easily show that if $t(x_1\dts x_k)$ is a term in the language of $S_2$, then the length of $t(n_1\dts n_k)$ is bounded by $p(n_1\dts n_k)$ for some polynomial, and, vice versa, if $f(x_1\dts x_k)$ is a function that increases the lengths of input numbers at most polynomially, then there exists a term $t$ such that $f(x_1\dts x_k)\leq  t(x_1\dts x_k)$. 

The theory $S_2$ is axiomatized by a finite set of axioms BASIC that fix the intended interpretation of symbols and induction axioms
\bel{PIND}
\alpha(\bar 0)\wedge \forall x(\alpha(\lfloor x/2\rfloor)\to \alpha(x))\to\forall x.\alpha (x),
\ee
for all bounded formulas $\alpha$. %% (One can state these axioms without $\lfloor\dots\rfloor$ by replacing the middle formula with 
%% $\forall x(\alpha(x)\to(\alpha(\bar 2x)\wedge\alpha(\bar 2x+1)))$.
$S^i_2$ is $S_2$ with the axiom schema restricted to $\Sigma^b_i$ formulas; we will define the classes $\Sigma^b_i$ below.

\subsection{Formulas and complexity classes}\label{ssec-2.3}

By a bounded formula, we mean a formula in which quantified variables are bounded by terms in the language of $S_2$. As we noted above, $x\leq t(y_1\dts y_k)$ implies that the \emph{length} of $x$ is polynomially bounded by the lengths of $y_1\dts y_k$. Sometimes we will also need to bound \emph{the number itself} by a polynomial in the lengths of some other numbers. For example, we may need to bound the number of steps of an algorithm that has as an input the binary representation of a number $x$. In such cases we use \emph{sharp bounds} which are bounds of the form $x\leq |s(y_1\dts y_k)|$. Since the outer function symbol in the term is $|\dots|$,  $x$ is polynomially bounded by the \emph{lengths} of $y_1\dts y_k$. \emph{Sharply bounded quantifiers} are bounded quantifiers with sharp bounds.

The hierarchy of bounded formulas $\Sigma_n^b,\Pi^b_n$, $n=1,2,\dots,$ is defined by counting alternations of bounded quantifiers while \emph{ignoring sharply bounded quantifiers}. In particular, prenex formulas that use bounded existential quantifiers and arbitrary sharply bounded quantifiers are $\Sigma^b_1$. So in $\Sigma^b_1$ (and similarly in higher classes) bounded existential quantifiers may alternate with sharply bounded universal quantifiers. This complication can be avoided by slightly extending $S^1_2$ with more function symbols and axioms. If we do this, then we can move all sharply bounded universal quantifiers after the bounded existential ones. The $\Sigma^b_n$ formulas where all sharply bounded quantifiers are after all bounded quantifiers are called {\it strict-}$\Sigma^b_n$, or  $\hat{\Sigma}^b_n$ formulas. $\hat{\Pi}^b_n$ formulas are defined similarly.

In order to simplify formulas, we will sometimes use quantifiers with a superscript $\forall^p,\exists^p$ to indicate that the lengths of the quantified variables are polynomially bounded in the formula that follows. For example, $\forall x\exists^p y.\phi(x,y)$ means that $\phi(x,y)$ is equivalent to the formula $|y|\leq p(|x|)\wedge\phi(x,y)$ for some polynomial $p(x)$ (or, equivalently, to  $y\leq t(x)\wedge\phi(x,y)$ for some $S_2$ term $t$).

The subsets of $\N$ that are in {\bf NP} are precisely those that are definable by $\Sigma^b_1$ formulas. Similarly, other classes from the hierarchy of formulas $\Sigma_n^b,\Pi^b_n$ define corresponding complexity classes $\Sigma_n^p,\Pi^p_n$ from the \emph{Polynomial Hierarchy}. 

For {\bf P}, there is no simple definition of a class of
formulas. Formulas from the class $\Sigma^b_0(=\Pi^b_0)$ have only
sharply bounded quantifiers. These bounds imply that they define sets
and relations computable in polynomial time, but we cannot define all
sets in {\bf P} by such formulas. The standard approach is to extend
the language by function symbols for every polynomial time algorithm
as it is in Cook's theory $PV$~\cite{cook75}.%
\footnote{The relation of $PV$ to $S^1_2$ is similar to
the relation of Primitive Recursive Arithmetic to Peano Arithmetic.} 
This requires also adding infinitely many axioms specifying the intended interpretation of each function symbol.  In this paper we will use a different approach, one that does not need an infinite number of function symbols and axioms. To this end we will use \emph{Buss's Witnessing Theorem.}

\begin{theorem}[\cite{buss86}]\label{t-witnessing}
Let $\phi(x,y)\in\Sigma^b_1$ and suppose that $S^1_2\vdash\forall x\exists y.\phi(x,y)$. Then there exists a polynomial time computable function $f$ such that $\N\models\forall x.\phi(x,f(x))$. Moreover, $f$ is definable by a $\Sigma^b_1$ formula.
\end{theorem}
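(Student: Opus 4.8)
The plan is to prove this by a proof-theoretic analysis of the given $S^1_2$-proof, combining \emph{free-cut elimination} with an explicit \emph{witness extraction}. First I would formalize $S^1_2$ in a sequent calculus, replacing the PIND schema~\eqref{PIND} by a PIND inference rule and taking the BASIC axioms together with the defining axioms as initial sequents. Since every $\Sigma^b_1$ formula is equivalent over $S^1_2$ to a $\hat\Sigma^b_1$ formula (after possibly conservatively adding function symbols so that the sharply bounded universal quantifiers can be pushed past the bounded existential ones), I may assume the endsequent is $\to\exists y\,\phi(x,y)$ with $\phi\in\hat\Sigma^b_1$. Applying free-cut elimination then yields a proof in which every cut formula is a subformula of an initial sequent or of the endsequent, so that all formulas occurring in the proof are, up to sharply bounded quantifiers, at most $\hat\Sigma^b_1$. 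This control on formula complexity is exactly what makes a uniform witnessing argument possible.

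Next I would define, for each sequent $A_1,\dts,A_k\to B_1,\dts,B_l$ of $\hat\Sigma^b_1$ formulas appearing in the proof, what it means for a function to \emph{witness} it: given values realizing the leading bounded existential quantifiers of the true antecedent formulas $A_i$, the function outputs, in polynomial time, a realizer for the leading existential quantifier of some succedent formula $B_j$ that is thereby made true. The main lemma states that every sequent in the free-cut-free proof is witnessed by a polynomial time computable function, and I would prove it by induction on the structure of the proof.

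The inductive step is the technical heart. The logical rules are routine: an $\exists$-right introduction supplies the witnessing term value directly, an $\exists$-left receives its witness as an input, and the propositional and structural rules only rearrange or combine already-available witnesses. The cut rule is handled by \emph{composition}: the witnessing function of the left premise produces a realizer for the cut formula, which is fed into the witnessing function of the right premise, and the composition of polynomial time functions is again polynomial time. The crucial and most delicate case is the PIND rule, where witnessing the conclusion $\alpha(t)$ requires \emph{iterating} the witnessing function of the induction step $\alpha(\lfloor x/2\rfloor)\to\alpha(x)$; because induction in $S^1_2$ proceeds by halving, only $|t|=O(\log t)$ iterations are needed, that is, polynomially many in the length of the input, so the iterated function stays polynomial time.

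The main obstacle will be to verify that the lengths of the intermediate witnesses remain polynomially bounded throughout this iteration — otherwise the repeated composition could blow up super-polynomially. Here I would use the bounds built into the $\hat\Sigma^b_1$ formulas together with the subformula property secured by free-cut elimination, which confine each realizer to a length polynomial in the length of the original input $x$. Finally, applying the main lemma to the endsequent $\to\exists y\,\phi(x,y)$ yields a polynomial time $f$ with $\N\models\forall x\,\phi(x,f(x))$; the ``moreover'' clause follows since the graph of any polynomial time function is definable by a $\Sigma^b_1$ formula.
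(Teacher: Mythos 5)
The paper does not actually prove this theorem: it is imported from Buss~\cite{buss86} as a black box, so there is no in-paper argument to compare yours against. Judged on its own, your proposal is a faithful outline of Buss's original proof: sequent-calculus formulation of $S^1_2$ with PIND as a rule, free-cut elimination to confine formula complexity, a witnessing lemma for sequents of (strict) $\Sigma^b_1$ formulas proved by induction on the proof, composition at cuts, and the decisive point that PIND by halving needs only $|t|$ iterations whose intermediate witnesses are length-bounded by the bounding terms of the $\hat\Sigma^b_1$ formulas --- a fixed polynomial in the input, not in the previous witness --- so the iteration stays polynomial time. Two points deserve tightening. First, in a free-cut-free proof the surviving cut formulas are descendants not only of the endsequent and initial sequents but also of the \emph{induction formulas} of PIND inferences; since PIND in $S^1_2$ is restricted to $\Sigma^b_1$ formulas this still yields the complexity bound you need, but your phrasing omits this case, and it is precisely why the argument would break for stronger induction. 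Relatedly, in the PIND case the premise's witnessing function may exit by witnessing a side formula of the succedent rather than $\alpha$, terminating the iteration early; this has to be built into the iterated function.

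The more substantive shortfall is the ``moreover'' clause. As the paper glosses it immediately after the theorem, definability means the \emph{provable} version: there is a $\Sigma^b_1$ formula $\psi$ with $S^1_2\vdash\forall x\exists! y.\psi(x,y)$ and $S^1_2\vdash\forall x\forall y(\psi(x,y)\to\phi(x,y))$. Your closing remark that the graph of any polynomial time function is $\Sigma^b_1$-definable gives only definability in $\N$. Provable totality and uniqueness do hold for a natural $\Sigma^b_1$ graph of any polynomial time machine (that is the converse half of Buss's characterization, using $\Sigma^b_1$-PIND to prove the existence of computations), but the clause $S^1_2\vdash\psi\to\phi$ requires an extra observation: the witnessing lemma must itself be verified inside $S^1_2$. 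Since the given proof is a fixed finite object, one shows by external induction on its sequents that $S^1_2$ proves each local witnessing statement, the PIND case again invoking $\Sigma^b_1$-PIND internally. With these repairs your argument is complete and coincides with the standard one.
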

The definability of $f$ means that there exists a $\Sigma^b_1$ formula $\psi(x,y)$ such that 
$$
S^1_2\vdash\forall x\exists! y.\psi(x,y)\wedge\forall x\forall y(\psi(x,y)\to\phi(x,y)).
$$

A formula $\sigma(x)$
is $\Delta^b_1$ provably in a theory $T$ if $\sigma(x)\in\Sigma^b_1$
and, for some $\pi(x)\in\Pi^b_1$, $T$~proves the
sentence $\forall x.\sigma(x)\equiv\pi(x).$  By Buss's
Witnessing Theorem, the provability of the equivalence in $S^1_2$
ensures that $\sigma(x)$ defines a set in~{\bf P}.  We should stress
that it is essential that the proof is in $S^1_2$. The equivalence
$\N\models\forall x.\sigma(x)\equiv\pi(x)$ in general only ensures that
$\sigma(x)$ defines a set in ${\bf NP\cap coNP}$ which is believed to
be larger than {\bf P}.
%% Thus when we will speak about a \emph{formalization of a
%%   polynomial predicate or relation,} we will mean a formula $\sigma(x)$ that comes from a triple  satisfying the
%% condition above.
%% (Again, this particular representation of predicates in {\bf P} is not essential.) 

Thus we will formalize polynomial decidable sets and relations by formulas that are  $\Delta^b_1$ provably in $S^1_2$. In the rest of this paper \emph{$\Delta^b_1$ will always mean: $\Delta^b_1$ provably in $S^1_2$.} Polynomial time computable functions will be formalized by $\Sigma^b_1$ formulas $\psi(x,y)$ such that $S^1_2\vdash\forall x\exists! y.\psi(x,y)$. One can show that polynomial time computable functions are provably in $S^1_2$ closed under composition and a form of recursion called \emph{limited recursion on notation}~\cite{cook75,buss86,krajicek95}. Because of these properties, one can formalize syntax in a natural way in  $S^1_2$.

%% We will also need to represent binary relations computable in polynomial time. The definition of the corresponding $\Delta^b_1$ formulas is the same, except that one uses two variables instead of one. Polynomial time computable functions will be formalized by $\Delta^b_1$ formulas defining the graphs of these functions.

%% \emph{$\Delta^b_1$ will always mean provably in $S^1_2$.} 

%% ; essentially, any natural syntactical definition would do.

% too many essentials!

\subsection{Proofs}

We will assume that proofs are formalized in a standard Hilbert-style
proof system for first order logic.  We will view the proofs as
strings of formulas such that each formula is either an axiom (logical
or an axiom of the theory in question) or is derived from previous
formulas by an application of a deduction rule. The particular choice
of the system makes little difference, because various calculi for first order logic \emph{polynomially simulate} each other, which means that there are polynomial time computable transformations of proofs from one calculus to the other. However, it is important that the graph of the proof is a general directed acyclic graph, not just a tree, because transforming a general proof into the tree form may increase the length exponentially. 

We could also use sequent calculi, but only the calculi with the cut rule present. Cut-elimination may increase the length more than any elementary function. 

We will assume that formulas and proofs are encoded by binary strings. The \emph{length of a proof} is the length of the string representing the proof. For the \emph{G\"odel number of a formula, or a proof}, we add 1 at the beginning of the string and take the number that it represents in binary notation.

A proof in a theory $T$ will simply be called a \emph{$T$-proof}.

\subsection{Notation}
%{Binary numerals and Turing machines.} 

A \emph{binary numeral} is a suitably
chosen closed term $\bar n$ whose value is $n$ and whose length is
$O(\log n)$. For example, we can represent a number with binary representation $a_1a_2\dots a_k$ by the term
\[
(\dots((a_1\cdot \bar 2)+a_2)\cdot \bar 2\dots\quad \dots+a_{k-1})\bar 2+a_k,
\]
where $\bar 2$ is $SS(0)$.

Our computation model is the standard Turing
machine, where the inputs are words in the alphabet $\Sigma:=\{0,1\}$. When
computing with numbers, we assume the binary representation. We will
use numbers instead of binary strings when we formalize computations.
For $n\in\N$, we denote by $|n|$ the length of the binary
representation of $n$ (the same symbol as is in $S_2$).

We will denote by $Pr_T(x,y)$ a natural formalization of the relation \emph{``$x$ is a $T$-proof of $y$''}. We will assume that basic properties of this relation are provable in $S^1_2$. Furthermore, we will assume that the following fact is true.

\begin{fact}\label{f-1}
If $m$ is a G\"odel number of a $T$-proof of a sentence with a G\"odel number $n$, then there exists an $S^1_2$-proof of $Pr_T(\bar m,\bar n)$ whose length is bounded by a polynomial in $|m|$ and $|n|$.
\end{fact}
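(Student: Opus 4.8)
The plan is to obtain Fact~\ref{f-1} as a special case of the \emph{feasible completeness} of $S^1_2$, the polynomially bounded counterpart of the usual provable $\Sigma_1$-completeness. By the conventions of Section~\ref{sec2}, $Pr_T(x,y)$ is a $\Delta^b_1$ relation: deciding whether $x$ codes a $T$-proof of the sentence coded by $y$ means parsing $x$ into a list of formulas, applying the poly-time axiom test to each line, checking the rule applications, and confirming that the last line is $y$, all in time polynomial in $|x|,|y|$. Fact~\ref{f-1} asserts that the \emph{truth} of such a statement about a concrete pair of numbers is certified by a short $S^1_2$-proof, and this is really a property of $\Delta^b_1$ predicates in general.

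The engine I would use is a lemma about sharply bounded formulas: for each $\Sigma^b_0$ formula $\phi(\vec x)$ there is a polynomial $q_\phi$ so that, whenever $\N\models\phi(\vec m)$, there is an $S^1_2$-proof of $\phi(\overline{\vec m})$ of length at most $q_\phi(|\vec m|)$, and symmetrically a proof of $\neg\phi(\overline{\vec m})$ when $\phi(\vec m)$ is false. This is proved by induction on the (fixed, constant-size) formula $\phi$. The base case is feasible term evaluation: from the BASIC axioms and short ``computation proofs'' for $+,\cdot,\lfloor x/2\rfloor,|x|,\#$ one gets that $S^1_2$ proves $t(\overline{\vec m})=\overline{t(\vec m)}$ by a proof polynomial in $|\vec m|$, so that any atomic formula on numerals is feasibly decided. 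Boolean connectives add only constant overhead. The decisive step is a sharply bounded quantifier $\forall x\le|s|\,\psi$: because the bound $|s(\overline{\vec m})|$ has \emph{value} only polynomial in $|\vec m|$, the quantifier ranges over polynomially many numerals $\bar 0,\bar 1,\dots$; one proves each instance by the induction hypothesis, proves feasibly that $x\le|s(\overline{\vec m})|$ exhausts exactly these numerals, and conjoins. It is crucial here that the bound is \emph{sharp}: an ordinary bounded quantifier $\forall x\le t$ could range over exponentially many values and would break the length bound.

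To cover $Pr_T$, whose poly-time axiom test is not literally sharply bounded, I would pass through its $\Delta^b_1$ definition. Let $M$ be a poly-time machine deciding $Pr_T$, so that $S^1_2$ proves $Pr_T(x,y)$ equivalent to the statement that $M$ accepts the input $\langle x,y\rangle$. For the specific true instance, the run of $M$ on $\langle m,n\rangle$ is a concrete accepting computation with polynomially many configurations, each coded by a numeral of polynomial length. Each single transition is a sharply bounded local condition, so by the lemma it has a short $S^1_2$-proof; chaining the polynomially many steps yields a short $S^1_2$-proof that $M$ accepts $\langle\bar m,\bar n\rangle$, and the provable equivalence then gives an $S^1_2$-proof of $Pr_T(\bar m,\bar n)$.

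Instantiating with $\vec m=(m,n)$ and using $\N\models Pr_T(m,n)$ delivers the desired $S^1_2$-proof of $Pr_T(\bar m,\bar n)$ of length polynomial in $|m|,|n|$. I expect the main obstacle to be purely quantitative bookkeeping: one must check that expanding each sharply bounded quantifier into its polynomially many instances, evaluating the intervening terms, and chaining the polynomially many transition steps of $M$ never compounds into a super-polynomial blow-up. That it does not rests on two facts --- the quantifier depth of the fixed formula is a constant, and the computation trace has polynomial length --- so that only finitely many polynomial factors ever multiply together.
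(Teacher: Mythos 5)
The paper offers no proof of Fact~\ref{f-1} for you to match: it is explicitly \emph{assumed} there (``we will assume that the following fact is true'') as a requirement on the natural formalization of $Pr_T$, the underlying result being the standard feasible numeralwise $\Sigma^b_1$-completeness of $S^1_2$ known from the literature (e.g., H\'ajek--Pudl\'ak, or Pudl\'ak's papers on finitistic consistency statements, where such bounds are worked out in detail). Your reconstruction is essentially that standard argument, and it is correct: short proofs of $t(\overline{\vec m})=\overline{t(\vec m)}$ by feasible term evaluation, short decisions of atomic formulas on numerals, and the expansion of \emph{sharply} bounded quantifiers into polynomially many numeral instances together with a polynomial-size numeral-exhaustion proof of $x\leq\bar k\to\bigvee_{i\leq k}x=\bar i$ --- and you correctly isolate sharpness as the point where the length bound would otherwise break. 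Two small remarks. First, only the positive half of your $\Sigma^b_0$ lemma is needed here, since Fact~\ref{f-1} asserts provability of a true instance, not feasible decidability. Second, the detour through a polynomial-time machine $M$ deciding $Pr_T$ is sound but avoidable: the natural formalization of $Pr_T(x,y)$ is itself (equivalent over $S^1_2$ to) a $\Sigma^b_1$ formula whose existential quantifier ranges over the parsed proof sequence, so for the concrete true instance one can witness it directly by (essentially) $m$ itself and apply your $\Sigma^b_0$ lemma to the sharply bounded matrix; the machine-trace version proves the more general statement that every true $\Delta^b_1$ (indeed $\Sigma^b_1$) instance on numerals has polynomial-size $S^1_2$-proofs, which is the form usually recorded in the literature. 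Your closing point about the quantitative bookkeeping is also right: since the formula is fixed and the trace has polynomial length, only constantly many polynomial factors compound.
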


The main numerical parameter will be denoted by $n$. When we say \emph{``polynomial length''} without mentioning the argument of the polynomial, we will always mean \emph{``length bounded by $p(n)$ for some polynomial $p$''}.

\section{The basic paradigm -- finite consistency}\label{sec3}

\subsection{Finite consistency}

Let $T\in\cal T$. We will denote by $Con_T(x)$ a formula expressing (in
a natural way) the fact that there is no $T$-proof of contradiction of
length $x$. In particular, we will need $Con_T(\bar n)$, for $n\in\N$. The question mentioned in the introduction is:

\begin{que}
What is the length of the shortest $T$-proof of $Con_T(\bar n)$?
\end{que}

Using the analogy with G\"odel's incompleteness theorem, it is
natural to conjecture that the proof must be long, specifically, not
polynomial in $n$. Friedman also proved a lower bound $n^\epsilon$ for some
$\epsilon>0$.%
\footnote{Note that the length of the sentence  $Con_T(\bar n)$ is $O(\log n)$.}
 This lower bound was improved to $\Omega(n/\log^2 n)$
for a proof system with Rosser's \emph{C-rule} 
\[
\frac{\exists x.\phi(x)}{\phi(c)}
\]
where $c$ is a new constant~\cite{pudlak87}. This rule enables one to
refer to an element satisfying $\phi$ without having to mention
$\phi$. The same asymptotic bound is probably true for some other
systems where this rule can be simulated.  In particular, in natural
deduction systems, we can start with an assumption $\phi(y)$ and argue
about $y$ without having to repeat the assumption in each proof line.

The idea of the proofs of these lower bounds is to adapt the original
proof of G\"odel to the finite setting. Thus instead of the original
diagonal formula, one uses a formula $\delta(\bar n)$ with intended
meaning {\it ``I do not have a $T$-proof of length $\leq n$''}. One
can easily prove that $\delta_T(\bar n)$ is true and any proof of it
must be longer than $n$. Then one proves that $\delta_T(\bar n)$ can
be derived from $Con_T(\bar n)$ by a short proof. This is essentially
the same as in the proof of G\"odel's theorem, except that one has to
prove good upper bounds on the lengths of proofs of certain
true sentences. The shorter proofs one is able to find, the larger the
lower bound is.

In \cite{pudlak86} a polynomial \emph{upper bound} was proved for finitely axiomatized sequential
theories. In \cite{pudlak87} the bound was improved to a linear upper bound for finitely axiomatized sequential theories and proofs using the C-rule.  
Sequential theories are, roughly speaking, theories in which one can code any finite sequence of elements of the universe (see~\cite{HP} for the definition). Already very weak fragments of arithmetic and set theory are known to be sequential. 
This bound is based on partial truth definitions. In the
standard proofs of the consistency of a theory $T$ (without any bound on
the lengths of proofs), one uses a truth definition for all
formulas. Since in proofs of bounded length only formulas of bounded
complexity can occur, it suffices to use a partial truth definition
that defines truth only for sentences of limited complexity. The fact
that partial truth definitions exist is well-known. However, to obtain such bounds one has to carefully estimate the length of the formulas and the lengths
of proofs of particular statements. A polynomial upper bound can also be proved for theories axiomatized by a schema of a particular form, which includes Peano Arithmetic and Zermelo-Fraenkel Set Theory~\cite{pudlak86}.

To sum up the discussion above we state the bounds explicitly, but for the sake of simplicity we will only use theories from $\cal T$.

\bt[\cite{pudlak86,pudlak87}]\label{t-bounds}
For every theory $T\in\cal T$, there exist $\epsilon>0$ such that 
the length of the shortest $T$-proof of $Con_T(\bar n)$ is at least $n^\epsilon$. Moreover, if $T$ is sequential and finitely axiomatized, then there are $T$-proofs of $Con_T(\bar n)$ lengths of polynomial in $n$.
\et

In spite of the polynomial upper bound, we still believe that the incompleteness phenomenon of G\"odel's theorem should manifest itself also in the finite domain---manifest not only by the $n^\epsilon$ lower bound. We conjecture that if $T$ is sufficiently \emph{stronger} than a theory
$S$, then $S$-proofs of $Con_T(\bar n)$ cannot be polynomially bounded. Concerning G\"odel's Theorem, it should be noted at this point that the fact that $T$ does not prove its own consistency is not important---having a proof of consistency in a theory that we do not a priory believe is consistent would be useless. What is important is the consequence of G\"odel's Theorem that there is no theory that could prove the consistency of all other consistent theories. The paradigm for our conjecture is this corollary, not the theorem itself.

Since it is not clear how much stronger $T$ must be, we proposed the following conjecture in~\cite{pudlak86}:

\begin{conjecture}{\sf CON$^N$}\label{CONN}
  For every $S\in\cal T$, there exists $T\in\cal T$ such that the lengths of 
  $S$-proofs of $Con_T(\bar n)$ cannot be bounded by a polynomial in $n$.%
\footnote{The superscript $N$ stands for ``nonuniform'', whose meaning will be explained in Section~\ref{sec6}.}
\end{conjecture}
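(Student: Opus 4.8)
The plan is to begin from the self-referential construction behind Theorem~\ref{t-bounds} and to ask exactly what extra ingredient is needed to upgrade its lower bound once the theory proving the consistency statement is genuinely weaker than the theory whose consistency is asserted. Fixing $S\in\cal T$, I would, for a candidate stronger $T\supseteq S$, consider the self-referential sentences $\delta_T(\bar n)$ with meaning \emph{``I have no $T$-proof of length $\le n$''}. Since every $S$-proof is in particular a $T$-proof, the truth of $\delta_T(\bar n)$ forces every $S$-proof of it to have length exceeding $n$; combining this with the derivation of $\delta_T(\bar n)$ from $Con_T(\bar n)$, which is short already in $S^1_2$ and hence in $S$, reproduces for the weaker theory $S$ a lower bound of the $n^\epsilon$ type. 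The immediate difficulty is that this is a \emph{polynomial} lower bound, whereas {\sf CON$^N$} demands that the lengths not be bounded by \emph{any} polynomial. The diagonal argument by itself never yields more than a near-linear bound, so no choice of $T$ can push it past a fixed polynomial; some genuinely quantitative, non-self-referential input is indispensable.

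To supply that input one would have to invoke an honest proof-complexity lower bound showing that the specific sentences encoding $Con_T(\bar n)$ resist short $S$-proofs. The only general tools available for this are the feasible interpolation theorem and witnessing theorems of the kind recalled in Theorem~\ref{t-witnessing}. These, however, yield super-polynomial lower bounds only for \emph{weak} systems---resolution, bounded-depth Frege, and low fragments such as $S^1_2$---and say essentially nothing about the strong theories $S$ that make the conjecture substantive. Equivalently, passing to the associated propositional proof system, a polynomial upper bound on $S$-proofs of $Con_T(\bar n)$ for all $T$ corresponds to that proof system admitting short proofs of the relevant family of tautologies, and our lower-bound methods simply do not reach that far up the hierarchy of systems.

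This is where the main obstacle bites, and it is why {\sf CON$^N$} is offered as a conjecture rather than a theorem. As already noted in the introduction, any proof that such consistency statements require super-polynomial length would in particular separate the relevant complexity classes; concretely it would yield $\textbf{NP}\neq\textbf{coNP}$, and hence $\textbf{P}\neq\textbf{NP}$, since the existence of a polynomially bounded proof system is equivalent to $\textbf{NP}=\textbf{coNP}$. No unconditional proof is therefore within reach of present methods. The realistic program---and the one the remainder of the paper pursues---is instead to verify {\sf CON$^N$} for restricted pairs $(S,T)$ in which $S$ corresponds to a weak propositional system with known lower bounds, and to establish the implications relating {\sf CON$^N$} to the other conjectures introduced below, thereby mapping its logical location even in the absence of an outright proof.
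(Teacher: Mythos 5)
You were asked to prove what the paper itself states only as a conjecture ({\sf CON$^N$}) with no proof offered, and you correctly recognized this: your discussion --- the diagonal sentence $\delta_T(\bar n)$ yielding only an $n^\epsilon$-type lower bound as in Theorem~\ref{t-bounds}, the polynomial upper bound for sequential finitely axiomatized theories blocking any superpolynomial bound from self-reference alone, and the observation that an unconditional proof would give ${\bf NP}\neq{\bf coNP}$ and hence ${\bf P}\neq{\bf NP}$ --- matches the paper's own motivating remarks essentially point for point. Nothing is missing relative to the paper, which likewise provides only this heuristic justification and pursues the conjecture's relations to other hypotheses rather than a proof.
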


% Note that we do not have any examples of two theories $S,T\in\cal T$
% such that $S$-proofs of $Con_T(n)$ cannot be polynomially bounded.
% We do not have any lower bounds on how much stronger $T$ must be. So
% the following stronger%

Of course, we would also like to know how much stronger $T$ must be
than $S$ so that there are no polynomial length $S$-proofs of
$Con_T(n)$. It has been conjectured that it suffices that $T$ proves
the consistency of $S$, i.e., the following seems to be true:
\begin{conjecture}{\sf CON$^{N+}$}
 for every $S,T\in\cal T$, if $T$ proves $Con_S$, then
  $S$-proofs of $Con_T(\bar n)$ cannot be bounded by a polynomial in $n$.
\end{conjecture}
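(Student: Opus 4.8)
Since {\sf CON}$^{N+}$ is stated as a conjecture, I do not expect to give a complete proof; indeed {\sf CON}$^{N+}$ implies {\sf CON}$^N$ (given $S$, the theory $T=S^1_2+Con_S$ lies in $\cal T$, is consistent because $S^1_2$ is $\Sigma_1$-sound, and proves $Con_S$), and {\sf CON}$^N$ in turn implies ${\bf P}\neq{\bf NP}$, so any genuine proof is out of current reach. What I can describe is the natural line of attack and the precise point at which it stalls.

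The plan is to finitize the incompleteness phenomenon that, for the theories one has in mind, accompanies the consistency-strength ordering: when $T\vdash Con_S$ the theory $T$ sits strictly above $S$, and along this ordering $S$ does not prove $Con_T$. The conjecture asks for the feasible refinement of this unprovability --- not merely that no $S$-proof of $Con_T(\bar n)$ exists, but that every such proof is superpolynomially long in $n$. I would try to run the self-referential construction behind the lower bound of Theorem~\ref{t-bounds}. There one builds a true formula $\delta(\bar n)$ asserting ``I have no short proof of myself'' and shows that a short proof of $Con_T(\bar n)$ would yield a short proof of $\delta(\bar n)$, contradicting its own assertion. The task is to carry this out so that the hypothesis $T\vdash Con_S$ supplies the extra leverage that turns the mere $n^\epsilon$ bound of Theorem~\ref{t-bounds} into a superpolynomial one.

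The role of $T\vdash Con_S$ is concrete: there is a single fixed $T$-proof $D$ of $Con_S$, and by Fact~\ref{f-1} this proof is recognized by short $S^1_2$-proofs, so $T$ has polynomial-length proofs of $Con_S(\bar n)$ for every $n$ (append $D$ to a short derivation of $Con_S(\bar n)$ from $Con_S$). Thus $T$ \emph{feasibly} certifies the finite consistency of $S$. The heuristic is then that a short $S$-proof of $Con_T(\bar n)$, for $n$ large enough to enclose $D$ together with a derivation of $\bot$ from $\neg Con_S$, would let $S$ feasibly ``reflect'' the proof $D$ and thereby feasibly certify its own consistency, which the incompleteness phenomenon should forbid.

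The hard part is exactly this reflection step. Converting ``$S$ has a short proof of $Con_T(\bar n)$'' into ``$S$ has a short proof witnessing $Con_S(\bar n)$'' requires a \emph{polynomial} bound on a reflection (or cut-elimination) argument, and no such feasible reflection is available --- the excerpt itself warns that cut-elimination can blow the length up beyond any elementary function. This is not a gap that cleverer bookkeeping will close: by the implication {\sf CON}$^{N+}$ $\Rightarrow$ {\sf CON}$^N$ $\Rightarrow$ ${\bf P}\neq{\bf NP}$ recorded above, the obstacle is the same complexity-theoretic barrier that keeps ${\bf NP}\neq{\bf coNP}$ out of reach. The realistic goal, pursued in the later sections, is therefore not to prove {\sf CON}$^{N+}$ outright but to locate it among the other conjectures --- deriving it from, or reducing it to, the proof-complexity and {\bf TFNP}-type hypotheses introduced below.
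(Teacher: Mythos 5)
You are right that {\sf CON}$^{N+}$ is stated as a conjecture and that no proof is expected; the paper itself offers none. The verifiable parts of your discussion are sound: the derivation of {\sf CON}$^N$ from {\sf CON}$^{N+}$ via $T=S^1_2+Con_S$ is correct (and your consistency argument is exactly right, since inconsistency of $S^1_2+Con_S$ would make $S^1_2$ prove the false $\Sigma_1$ sentence $\neg Con_S$), and such a proof is indeed out of reach, implying ${\bf P}\neq{\bf NP}$ and in fact ${\bf NP}\neq{\bf coNP}$, which is how the paper classifies the nonuniform conjectures. A minor simplification: when $T\vdash Con_S$, the $T$-proofs of $Con_S(\bar n)$ have \emph{logarithmic} length by direct substitution into $\forall x.Con_S(x)$; no appeal to Fact~\ref{f-1} is needed.

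There is, however, a genuine flaw in your heuristic. Your endpoint --- that a short $S$-proof of $Con_T(\bar n)$ would let $S$ ``feasibly certify its own consistency, which the incompleteness phenomenon should forbid'' --- is contradicted by the paper's own Theorem~\ref{t-bounds}: for finitely axiomatized sequential $T$, the sentences $Con_T(\bar n)$ \emph{do} have polynomial-length $T$-proofs, so feasible self-certification of bounded consistency is not forbidden at all. The paper is explicit that the paradigm for the conjecture is not G\"odel's Second Theorem but its corollary that no single theory proves the consistency of all consistent theories; a finitized Gödel II in your naive form is simply false, which is precisely why the statement remains a conjecture rather than a theorem obtainable by sharpening the $\delta(\bar n)$ argument. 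What the paper supplies in place of a proof is bracketing evidence for the threshold $T\vdash Con_S$: on one side, a composition lemma (if $R$ feasibly proves $Con_S(\bar n)$ and $S$ feasibly proves $Con_T(\bar n)$, then $R$ feasibly proves $Con_T(\bar n)$) yielding the proposition that failure of the conjecture at the first step $T+Con_T$ for every $T$ would propagate feasible consistency proofs up the entire tower $T^{Con}_k$; on the other side, Hrube\v{s}'s Rosser-style theorem showing the hypothesis cannot be weakened to mere logical strength, since there is a true $\Pi_1$ sentence $\phi$ unprovable even in $T\supseteq S$ for which $S$ nonetheless has polynomial-length proofs of $Con_{S+\phi}(\bar n)$. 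Your proposal positions the conjecture correctly in the hierarchy but does not reconstruct this evidence, and its motivating mechanism rests on the one form of incompleteness the paper shows does not finitize.
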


The following observation suggests that the assumption about provability of the consistency may be the right choice.

\begin{proposition}
Let $T_0^{Con}:=T$ and $T_{k+1}^{Con}:={T_k^{Con}+Con_{T_k^{Con}}}$ for $k\in\N$. Suppose that for every $T\in\cal T$, $T$ proves  $Con_{T+Con_T}(\bar n)$ by proofs of polynomial length. Then every $T\in\cal T$ proves $Con_{T_k^{Con}}(\bar n)$ by proofs of polynomial length for every $k\in\N$.
\end{proposition}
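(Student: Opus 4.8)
The plan is to prove the conclusion by induction on $k$, writing $S_k:=T_k^{Con}$, so that $S_0=T$ and $S_{k+1}=S_k+Con_{S_k}$. First I would record a preliminary point that the statement tacitly needs: the hypothesis already forces every $S_k$ to be consistent, hence to lie in $\cal T$, so that the hypothesis can legitimately be applied at each level. This is itself a short induction on $k$. Clearly $S_0=T\in\cal T$. If $S_k\in\cal T$, apply the hypothesis to $A=S_k$; were $S_{k+1}=S_k+Con_{S_k}$ inconsistent, with a refutation of length $N_0$, then $Con_{S_{k+1}}(\overline{N_0})$ would be false, yet the hypothesis supplies an $S_k$-proof of it, while provable $\Sigma$-completeness supplies a short $S^1_2$-proof, hence an $S_k$-proof, of its negation; thus $S_k$ would be inconsistent, a contradiction. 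So $S_{k+1}$ is consistent, and since it extends $S^1_2$ by finitely many axioms with a polynomial-time decidable axiom set, $S_{k+1}\in\cal T$.

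The base case $k=0$ asks for short $T$-proofs of $Con_T(\bar n)$. This follows from the hypothesis applied to $A=T$, which yields short $T$-proofs of $Con_{T+Con_T}(\bar n)=Con_{S_1}(\bar n)$, together with the elementary implication $Con_{S_1}(\bar n)\to Con_T(\bar n)$: since $S_1\supseteq T$, every $T$-refutation is an $S_1$-refutation, and $S^1_2$ proves this inclusion of the proof predicates by a proof of length polynomial in $n$.

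For the inductive step, assume $T$ proves $Con_{S_k}(\bar n)$ by proofs of length polynomial in $n$. Applying the hypothesis to $A=S_k$ gives, for each $n$, an $S_k$-proof $\pi_n$ of $Con_{S_{k+1}}(\bar n)$ whose length, and hence whose Gödel number $g$, is polynomial in $n$. The heart of the argument is a transfer lemma asserting that
$$T\ \text{proves}\ Con_{S_k}(\overline{r(n)})\to Con_{S_{k+1}}(\bar n)\ \text{by a proof of length polynomial in}\ n,$$
for a suitable polynomial $r$. To prove it I reason inside $T$: assume $\neg Con_{S_{k+1}}(\bar n)$ and fix an $S_{k+1}$-refutation $y$ of length $\le n$. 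Since $y$ is a concrete object, provable $\Sigma$-completeness gives a short $S^1_2$-proof, a fortiori a short $S_k$-proof, of $\neg Con_{S_{k+1}}(\bar n)$. Using Fact~\ref{f-1} with the theory $S_k$, $T$ also has a short proof of $Pr_{S_k}(\bar g,\bar h)$, where $h$ is the Gödel number of $Con_{S_{k+1}}(\bar n)$, i.e.\ a short proof that $\pi_n$ is an $S_k$-proof of $Con_{S_{k+1}}(\bar n)$. Concatenating these two $S_k$-proofs produces an $S_k$-refutation whose length is bounded by a polynomial $r(n)$, so $\neg Con_{S_k}(\overline{r(n)})$; all of this is formalizable by a $T$-proof of length polynomial in $n$, which is the lemma. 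I then combine the lemma with the induction hypothesis: as $r$ is a polynomial, the induction hypothesis at argument $r(n)$ gives a $T$-proof of $Con_{S_k}(\overline{r(n)})$ of length polynomial in $r(n)$, hence polynomial in $n$, and one application of modus ponens yields the desired short $T$-proof of $Con_{S_{k+1}}(\bar n)$. The polynomial degrades as $k$ grows, but since $k$ is fixed this is exactly what the statement permits.

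The step I expect to be the main obstacle is the transfer lemma, that is, converting the given $S_k$-provability of $Con_{S_{k+1}}(\bar n)$ into $T$-provability. One cannot simply discharge the extra axioms $Con_{S_0},\dots,Con_{S_{k-1}}$ of $S_k$ by the deduction theorem, because $T$ has no proof of these full, unbounded consistency statements; any attempt to bound the values at which they are instantiated in $\pi_n$ fails, since short proofs in the language of $S_2$ may mention terms of enormous value. The device that circumvents this is to treat $\pi_n$ as a black box and to use that $\neg Con_{S_{k+1}}(\bar n)$ is a $\Sigma_1$ statement which, when true, has a short proof: bounded consistency of $S_k$ at the polynomial length $r(n)$ then suffices to ``trust'' $\pi_n$, for $S_k$ cannot prove both $Con_{S_{k+1}}(\bar n)$ and its negation within length $r(n)$ without being inconsistent up to that length. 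Making the length bookkeeping precise — that the construction of the $S_k$-refutation from $y$ and $\pi_n$, and its verification in $T$, all remain polynomial — is the only genuinely technical part, and it rests on Fact~\ref{f-1} and on the basic properties of $Pr_{S_k}$ provable in $S^1_2$ with short proofs.
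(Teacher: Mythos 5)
Your proof is correct and takes essentially the same approach as the paper: the paper reduces the proposition to a transitivity lemma (if $R$ proves $Con_S(\bar n)$ and $S$ proves $Con_T(\bar n)$ by polynomial-length proofs, then $R$ proves $Con_T(\bar n)$ by polynomial-length proofs), proved by exactly your argument — inside the outer theory, a hypothetical short refutation of the inner theory yields, via provable $\Sigma$-completeness and the verifiability of the given short proof (Fact~\ref{f-1}), a polynomial-length refutation in the middle theory, contradicting its bounded consistency. Your ``transfer lemma'' is this lemma restated as an implication discharged by modus ponens against the induction hypothesis, and your preliminary check that each $T_k^{Con}$ remains consistent, hence in $\cal T$, merely makes explicit a point the paper leaves implicit.
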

In plain words, if there are polynomial length $T$-proofs of  $Con_{T+Con_T}(\bar n)$ for every $T\in\cal T$, then there are polynomial length $T$-proofs of bounded consistencies of theories essentially stronger than $T+Con_T$. The proposition is an immediate corollary of the following lemma.

\bl
Suppose that $R$ proves  $Con_s(\bar n)$ by proofs of polynomial length and $S$ proves  $Con_T(\bar n)$ by proofs of polynomial length. Then $R$ proves  $Con_T(\bar n)$ by proofs of polynomial length. (Polynomial bounds are in terms of $n$.)
\el
\prfs
Instead of polynomially bounded proofs of bounded consistency statements, suppose that $R\vdash Con_S$ and $S\vdash Con_T$. Arguing in $R$, suppose that $\neg Con_T$. By the $\Sigma$-completeness of $S$, we have $S\vdash\neg Con_T$, hence $S$ is inconsistent, contrary to the assumption $R\vdash Con_S$.%
\footnote{A high-level proof is: $Con_S$ is the $\Pi_1$-reflection principle for $S$ and $Con_T$ is a $\Pi_1$ sentence. Hence $R\vdash Con_T$.}
To prove the lemma just restate the proof with bounded consistency statements and polynomial bounds on the proofs.
\qed

It is well-known~\cite{ehrenfeucht-mycielski} that if $T$ is stronger (proves more
sentences) than $S$, then some sentences provable in both theories
have much shorter proofs in $T$ compared to the proofs in~$S$. This may suggest that it would
suffice to make $T$ just a little stronger than $S$ in order to ensure
that $S$-proofs of $Con_T(\bar n)$ do not have polynomial length 
proofs. However, recently Pavel Hrube\v{s} proved, using a Rosser-type
selfreferential sentence, that in general this is not the case [personal communication].
His result is even stronger than the mere refutation of such a strengthening of Conjecture~{\sf CON$^{N+}$}.

\begin{theorem}[P. Hrube\v{s}, unpublished]
Let $S\in\cal T$ be a sequential finitely axiomatized theory %, or theory axiomatized by a schema. 
and let $S\sub T\in\cal T$. Then  there exists a true $\Pi_1$ sentence $\phi$  such that $\phi$ is not provable in $T$, yet the lengths of  $S$-proofs of $Con_{S+\phi}(\bar n)$ can be bounded by a polynomial in~$n$.
\end{theorem}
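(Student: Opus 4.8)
The plan is to produce $\phi$ by the diagonal lemma as a Rosser-type sentence and to exploit that, $S$ being sequential and finitely axiomatized, Theorem~\ref{t-bounds} gives $S$-proofs of $Con_S(\bar k)$ of length polynomial in $k$. Write $Pr_S(x,\psi)$ for ``$x$ is an $S$-proof of $\psi$'' (similarly for $T$); by Fact~\ref{f-1} true instances $Pr_S(\bar m,\psi)$ have $S$-proofs of length polynomial in $|m|$, and false bounded statements are refuted by short proofs already in $S^1_2$. The backbone of the construction is the $S$-Rosser sentence
\[
\phi\ \leftrightarrow\ \forall x\big(Pr_S(x,\phi)\to\exists y<x\;Pr_S(y,\neg\phi)\big),
\]
a $\Pi_1$ sentence. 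By itself this backbone already delivers two of the three requirements; to secure the third, unprovability in the given $T$, it will have to be augmented, and that augmentation is where the difficulty lies.

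First I would record the easy properties of the backbone. The usual Rosser argument, needing only $Con_S$, gives $S\not\vdash\phi$ and $S\not\vdash\neg\phi$: a least proof of $\phi$ would force, via the sentence, a smaller proof of $\neg\phi$ and hence an inconsistency, and symmetrically for $\neg\phi$. Consequently there is no $S$-proof of $\phi$, the universal implication holds vacuously, and $\phi$ is a \emph{true} $\Pi_1$ sentence; moreover $S\not\vdash\neg\phi$ shows that $S+\phi$ is consistent, so $Con_{S+\phi}(\bar n)$ is genuinely true.

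The heart of the matter is the polynomial upper bound. Arguing inside $S$, assume a refutation of $S+\phi$ of length $\le n$; it converts, by an $S$-proof of length polynomial in $n$, into an $S$-proof of $\neg\phi$ with G\"odel number $m$, $|m|\le p_1(n)$, so that $S$ proves $Pr_S(\bar m,\neg\phi)$. Using the polynomial-length proof of $Con_S(\overline{p_2(n)})$ from Theorem~\ref{t-bounds} (with $p_2$ chosen large enough), $S$ proves $\neg Pr_S(x,\phi)$ for every $x$ of length $\le p_2(n)$: a proof of $\phi$ of such length, combined with the proof $\bar m$ of $\neg\phi$, would be a short $S$-refutation of $S$, contrary to $Con_S(\overline{p_2(n)})$. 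Now $S$ manufactures a proof of $\phi$ of length polynomial in $n$: for $x$ of length $\le p_2(n)$ the antecedent $Pr_S(x,\phi)$ is false, and for the remaining $x$ (necessarily $x>\bar m$) the value $y=\bar m$ witnesses $\exists y<x\,Pr_S(y,\neg\phi)$; hence $S$ proves the universal implication, i.e.\ $\phi$. This short $S$-proof of $\phi$ contradicts the just-proved nonexistence of short proofs of $\phi$. Therefore the assumed refutation cannot exist, and $S\vdash Con_{S+\phi}(\bar n)$ by proofs of length polynomial in $n$.

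The step I expect to be the main obstacle is making $\phi$ unprovable in $T$ without destroying this upper bound. The natural device is to conjoin to the backbone a guard such as $\forall x\,(Pr_T(x,\phi)\to\exists y\le x\,Pr_S(y,\,0\!=\!1))$, which kills any $T$-proof of $\phi$: a least such proof $\bar p$ would yield $T\vdash\exists y\le\bar p\,Pr_S(y,\,0\!=\!1)$, a false bounded statement, whence $T$ would be inconsistent. The tension is that the upper bound above works \emph{because} the comparison is internal to $S$, so that small proofs of $\phi$ and of $\neg\phi$ collapse into a short refutation of $S$; but to \emph{prove} the guard inside $S$ one would have to establish that $\phi$ has no short $T$-proof, a bounded unprovability statement for the possibly much stronger theory $T$ that $S$ has no means to prove, and a naive use of the guard reduces the bounded consistency of $S+\phi$ to the unavailable $Con_T$ rather than to $Con_S$. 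The crux of Hrube\v{s}'s argument is thus to arrange the self-reference so that the $T$-guard blocks every $T$-proof of $\phi$ yet is never invoked along short refutations of $S+\phi$, keeping the reduction of $Con_{S+\phi}(\bar n)$ to $Con_S(\overline{p(n)})$ intact. The remaining points are routine: carrying out the Rosser manipulation with explicit polynomial bounds on all proof lengths, and checking that the needed instances of $\Sigma_1$-completeness (Fact~\ref{f-1}) and of bounded least-number reasoning are available with short proofs in $S^1_2$.
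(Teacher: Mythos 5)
Your analysis of the $S$-Rosser backbone is sound, and your closing paragraph correctly diagnoses the crux — but the proposal does not resolve it, and as written it only proves a strictly weaker statement: the backbone gives $S\not\vdash\phi$, not $T\not\vdash\phi$ (indeed, if $T\vdash Con_S$ then $T$ may well prove your backbone sentence, whose truth follows from the consistency of $S$). The actual resolution is not to conjoin a guard to the $S$-Rosser sentence and arrange that it is ``never invoked''; Hrube\v{s} discards the $Pr_S$-antecedent entirely and takes the asymmetric fixpoint
\[
S\vdash \phi\ \equiv\ \forall x\bigl(Pr_T(x,\bar\phi)\to\exists y\,(|y|\leq 2^{|x|}\wedge Pr_S(y,\overline{\neg\phi}))\bigr),
\]
with $T$ in the antecedent, $S$ in the consequent, and the witness bound $2^{|x|}$ in place of your $y<x$. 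The idea you are missing is that $S$ never needs $Con_T$ or any general bounded-unprovability statement about $T$: what the upper-bound argument requires is only that $S$ prove, by proofs of length polynomial in $n$, that every $T$-proof $x$ of $\phi$ has $2^{|x|}\geq\bar n$ — and because of the exponential scaling there are at most $n-1$ candidate values of $x$ (those with $|x|<\log n$), each of which $S$ refutes by \emph{direct computation}, using only the true external fact $T\not\vdash\phi$ (this is Lemma~\ref{l-3.5} of the paper; note the asymmetry with your $Con_S$-based elimination of small $S$-proofs, which has no analogue for $T$). For the remaining $x$ one has $2^{|x|}\geq n$, which dominates the length of the $S$-proof of $\neg\phi$ assumed under $\neg Con_{S+\phi}(\bar n)$, so the consequent is witnessed exactly as in your $y<x$ comparison; from there your chain — formalized $\Sigma_1$-completeness via Fact~\ref{f-1}, a short derivation of the provability of $\phi$, collapse to $\neg Con_S(\bar m)$, and discharge by the polynomial upper bound of Theorem~\ref{t-bounds} — goes through essentially verbatim.

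Two further points. First, with this fixpoint the unprovability $T\not\vdash\phi$ comes for free by the usual Rosser argument run on the $T$ side: a $T$-proof $p$ of $\phi$ would make $T$ prove the bounded statement $\exists y\,(|y|\leq 2^{|p|}\wedge Pr_S(y,\overline{\neg\phi}))$, which is false since $S\not\vdash\neg\phi$ and hence refutable in $T$ by checking the finitely many candidates, contradicting the consistency of $T$; truth of $\phi$ in the standard model is then the vacuous truth of the implication. Second, a reparable but genuine inaccuracy in your upper-bound paragraph: arguing inside $S$ under $\neg Con_{S+\phi}(\bar n)$ there is no concrete G\"odel number $\bar m$ of a proof of $\neg\phi$; one must carry the bounded existential $\exists y\,(Pr_S(y,\overline{\neg\phi})\wedge |y|\leq \bar n)$ through the whole derivation (the paper's $Prl_S$), which is also why the length-comparison in the Rosser consequent has to be phrased against the bound $n$ rather than against a specific witness.
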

% In particular, if $S=T$, we get $\phi\in\Pi_1$ unprovable in $S$ with polynomially bounded $S$-proofs of $Con_{S+\phi}(\bar n)$.

\bprf
By the fixpoint theorem, one can construct a sentence $\phi$ such that
\bel{fixpoint}
S\vdash
\phi\ \equiv\ \forall x(Pr_T(x,\bar{\phi})\to
  \exists y(|y|\leq 2^{|x|}\wedge Pr_S(y,\neg\bar{\phi}))),
\ee
where $Pr_T(u,v)$ (and $Pr_S(u,v)$) are natural formalizations of the relation $u$ is a $T$-proof (respectively $S$-proof) of $v$. 
This is the well-known Rosser sentence, except that we use the bound $|y|\leq 2^{|x|}$ instead of the usual one $y<x$ and two proof relations instead of one. One can easily prove
\[
T\not\vdash\phi,\ S\not\vdash\neg\phi,\mbox{ and }\N\models\phi,
\]
in the same way as it is done for the standard Rosser sentence~(cf.~\cite{HP}). The idea of the proof of the theorem is to adapt the proof of $S\not\vdash\neg\phi$ so that it gives a polynomial length $S$-proof of $Con_S(\overline{p(n)})\to \neg\exists x(Pr_{S}(x,\neg\bar\phi)\wedge |x|\leq\bar n)$ for some polynomial $p$. Note that the consequent of the implication is essentially $Con_{S+\phi}(\bar n)$. Hence we get the statement of the theorem from the upper bound of  Theorem~\ref{t-bounds}.

In the rest of this proof we will use the symbol 
$\dok$ to denote provability by a proof of length polynomial in $n$.

\begin{lemma}\label{l-3.5}
$S\dok \neg\exists x(Pr_T(x,\bar{\phi})\wedge 2^{|x|}<\bar n)$, or equivalently,
$S\dok \forall x(Pr_T(x,\bar{\phi})\to 2^{|x|}\geq\bar n)$.
\end{lemma}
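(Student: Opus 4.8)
The plan is to prove the lemma by a bounded, formalized search that exploits the fact—established in the standard way for the Rosser sentence—that $T\not\vdash\phi$. The crucial observation is that the bound $2^{|x|}<\bar n$ forces the G\"odel number $x$ itself to satisfy $x<\bar n$ (indeed $|x|<\log_2 n$), so the statement really ranges only over the $O(n)$ candidate proofs whose code lies below $n$. Thus, although a $\Pi^b_1$ statement has no reason to admit a short proof in general, here the search space is \emph{polynomially bounded} in $n$, which is exactly the budget that $\dok$ permits (proofs of length polynomial in $n$). This is also why the fixpoint~(\ref{fixpoint}) is set up with the unusual bound $|y|\leq 2^{|x|}$ rather than $y<x$: it keeps the relevant quantities in the small-number regime where the range stays of size $\mathrm{poly}(n)$.

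Concretely, I would proceed in two steps. First, recall $T\not\vdash\phi$, so for every numeral $x$ with $2^{|x|}<\bar n$ the sentence $\neg Pr_T(\bar x,\bar\phi)$ is true. Since $Pr_T$ is $\Delta^b_1$ (checking a putative proof is a polynomial-time computation), each such false instance is refuted by a polynomial-time run of the proof-checker on an input of length $|x|+|\phi|=O(\log n)$; by the ``negative'' companion of Fact~\ref{f-1}—provable $\Sigma$-completeness applied to the rejecting computation—$S^1_2$ proves $\neg Pr_T(\bar x,\bar\phi)$ by a proof of length $\mathrm{poly}(\log n)$. Second, I would aggregate these $O(n)$ short refutations into the single bounded-universal statement. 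For this one invokes the standard fact that $S^1_2$ proves $\forall x\,(2^{|x|}<\bar n\to\bigvee_{2^{|x_0|}<n}x=\bar x_0)$ by a proof of length $\mathrm{poly}(n)$, and then case-splits: under each disjunct $x=\bar x_0$ the formula $\neg Pr_T(x,\bar\phi)$ reduces to the already-proved $\neg Pr_T(\bar x_0,\bar\phi)$. Summing the $O(n)$ instance proofs (each of length $\mathrm{poly}(\log n)$) together with the $\mathrm{poly}(n)$ case-analysis yields an $S$-proof of $\forall x(Pr_T(x,\bar\phi)\to 2^{|x|}\geq\bar n)$ of length polynomial in $n$, which is precisely the equivalent form of the lemma.

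I expect the main obstacle to be the aggregation step rather than the individual refutations. One must check that assembling the numeral case-split and substituting into each instance genuinely costs only $\mathrm{poly}(n)$ and hides no super-polynomial blow-up, and that the negative provable $\Sigma$-completeness (short proofs of the true instances of $\neg Pr_T$) is available with the stated length bound. A tempting alternative is to argue directly from the fixpoint~(\ref{fixpoint}) in Rosser style—assuming a small $T$-proof of $\phi$ and deriving a contradiction inside $S$—but this route does not obviously dispense with the input $T\not\vdash\phi$, and since it must still rule out short $S$-refutations of $\phi$ it seems to require exactly the bounded bookkeeping that the search argument makes explicit. I would therefore keep the formalized-search proof as the main line.
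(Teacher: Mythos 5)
Your proposal is correct and follows essentially the same route as the paper: the paper's proof likewise observes that there are at most $n-1$ candidates $x$ with $2^{|x|}<n$, lets $S$ verify for each of them (using the externally known fact $T\not\vdash\phi$) that it is not a $T$-proof of $\phi$ by a short proof, and sums these into a proof of total length polynomial in $n$. Your write-up merely makes explicit the aggregation step (the numeral case-split and the provable $\Sigma$-completeness bounds) that the paper leaves implicit.
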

\prfs
The number of numbers $x$ such that $2^{|x|}<n$ is at most $n-1$. Thus $S$ can verify that each of them is not a $T$-proof of $\phi$ using a proof whose total length is polynomial in $n$. We are using the fact (not provable in $S$) that $T\not\vdash\phi$.
\qed
Now we continue with the proof of the theorem. 
We will abbreviate the formula $\exists y(Pr_S(y,x)\wedge |y|\leq z)$ by $Prl_S(z,x)$. 
%%
%% Lemma~\ref{l-3.5} gives a concrete $S$-proof of $\neg\exists x(Pr_T(x,\overline{\phi})\wedge 2^{|x|}<\bar n)$ of polynomial size; let $K$ be its G\"odel number. $S$ can verify that $K$ is a G\"odel number of this sentence, so we have
%% \[
%% S\dok Pr_S(\bar K,\lceil\forall x(Pr_T(x,\overline{\phi})\to 2^{|x|}\geq\bar n)\rceil)\wedge |\bar K|\leq\bar m_1,
%% \]
%% where $m_1$ is polynomially bounded by $n$. Hence we also have
%% \bel{e-y}
%% S\dok \exists u(Pr_S(u,\lceil\forall x(Pr_T(x,\overline{\phi})\to 2^{|x|}\geq\bar n)\rceil)\wedge |u|\leq\bar m_1).
%% \ee
%%
We have
%% \bel{e-x}
%% S\dok \exists y(Pr_S(y,\overline{\neg\phi})\wedge |y|\leq\bar n)\to
%% \exists z(Pr_S(z,\lceil\exists y(Pr_S(y,\overline{\neg\phi})\wedge |y|\leq\bar n)\rceil)\wedge |z|\leq\bar m_2),
%% \ee
\bel{e-x}
S\dok Prl_S(\bar n,\overline{\neg\phi})\to
Prl_S(\bar m_2,\lceil(Prl_S(\bar n,\overline{\neg\phi}))\rceil),
\ee
where $m_2$ is polynomially bounded by $n$. This follows by the principle: given a proof of length $\leq n$, $S$ can prove that such a proof exists by a proof of length polynomial in $n$ (see Fact~\ref{f-1}).
Observe that according to~(\ref{fixpoint})
%% \[
%% S\dok (\forall x(Pr_T(x,\bar{\phi})\to 2^{|x|}\geq\bar n)\wedge 
%% \exists y(Pr_S(y,\overline{\neg\phi})\wedge |y|\leq\bar n))\to
%% \phi.
%% \]
\[
S\dok (\forall x(Pr_T(x,\bar{\phi})\to 2^{|x|}\geq\bar n)\wedge 
Prl_S(\bar n,\overline{\neg\phi}))\to
\phi.
\]
(This is in fact provable by a logarithmic length proof.) From Lemma~\ref{l-3.5}, we get
%% \[
%% S\dok \exists y(Pr_S(y,\overline{\neg\phi})\wedge |y|\leq\bar n)\to
%% \phi.
%% \]
\[
S\dok Prl_S(\bar n,\overline{\neg\phi})\to
\phi.
\]
Again, by formalizing this proof,
%% \[
%% S\dok \exists z(Pr_S(z,\lceil\exists y(Pr_S(y,\overline{\neg\phi})\wedge |y|\leq\bar n)\rceil)\wedge |z|\leq\bar m_2)\to
%% \exists v(Pr_S(v,\overline{\phi})\wedge |v|\leq \bar m_3),
%% \]
\[
S\dok Prl_S(\bar m_2,\lceil Prl_S(\bar n,\overline{\neg\phi})\rceil)\to
Prl_S(\bar m_3,\overline{\phi}),
\]
for some $m_3$ polynomially bounded by $n$. By (\ref{e-x}), this reduces to 
%% \[
%% S\dok \exists y(Pr_S(y,\overline{\neg\phi})\wedge |y|\leq\bar n)\to
%% \exists v(Pr_S(v,\overline{\phi})\wedge |v|\leq \bar m_3).
%% \]
\[
S\dok Prl_S(\bar n,\overline{\neg\phi})\to
Prl_S(\bar m_3,\overline{\phi}).
\]
Since the antecedent says that $\neg\phi$ is provable by a proof of length $\leq n$, we get
%% \[
%% S\dok \exists y(Pr_S(y,\overline{\neg\phi})\wedge |y|\leq\bar n)\to
%% \neg Con_S(\bar m),
%% \]
\[
S\dok Prl_S(\bar n,\overline{\neg\phi})\to
\neg Con_S(\bar m),
\]
for some $m$ polynomially bounded by $n$. Since $Con_S(\bar m)$ has an $S$-proof of length polynomial in $m$, we get a polynomial length $S$-proof of the negation of the antecedent, hence also of $Con_{S+\phi}(\bar n)$.
\eprf

Finally, we observe that if $T$ is not finitely axiomatized, then it is possible that it does not prove the sentences $Con_T(\bar n)$ by proofs of polynomial (in $n$) lengths. I am indebted to Fedor Pakhomov for suggesting this problem.

\bpr
Suppose {\sf CON}$^N$ is true. Then for every $S\in\cal T$, there exists $S'$, $S\sub S'\in\cal T$ such that the lengths of $S'$-proofs of $Con_{S'}(\bar n)$ cannot be bounded by a polynomial in $n$.
\epr
\bprf
Let $S\in\cal T$ be given. Let $T\in\cal T$ be such that the lengths of 
$S$-proofs of $Con_T(\bar n)$ cannot be bounded by a polynomial in $n$. Such a $T$ exists if we assume {\sf CON}$^N$. Define
\[
S' := S \cup \{ \neg Pr_T(\bar N,\bot)\ |\ N\in\N \}.
\]
Suppose that $S'$ proves $Con_{S'}(\bar n)$ by a proof of length $p(n)$ for some polynomial $p$. Such a proof can only use a polynomial number of the axioms of the form
$\neg Pr_T(\bar N,\bot)$ with $|N|\leq p(n)$ (while there are exponentially many such axioms). But $S$ can prove each of these axioms by a proof of polynomial size. Hence
\[
S\vdash Con_{S'}(\bar n)
\]
by a polynomial size proof. Furthermore,  for a suitable polynomial $q$,
\[
S\vdash Con_{S'}(\overline{q(n)}) \to Con_T(\bar n)
\]
by a polynomial (in fact, even polylogarithmic) length proof. To prove this claim, one only needs to formalize the following argument in $S$:
\begin{quote}
``Suppose there exists a proof $P$ of contradiction in $T$ of length $\leq n$. Then it is possible to prove in $S$ that $Pr_T(\bar P,\bot)$ holds true by a proof of length polynomial in $|P|$, i.e., also polynomial in $n$. Hence we would get a proof of contradiction in $S'$ of length $q(n)$.''
\end{quote}
Thus we would get polynomial length $S$-proofs of $Con_T(\bar n)$ contrary to our assumption.
\eprf

\subsection{A finite reflection principle}

Recall that the sentences expressing consistency of a theory $T$ are
special cases of \emph{reflection
  principles} (see~\cite{smorynski77}). There are many versions of
reflection principles. Here we will focus on the uniform
$\Sigma_1$-reflection principles.

%% Let a universal $\Sigma_1$ formula $\sigma_1(x)$ be fixed. 
%% (This means that for every $\Sigma_1$ sentence $\phi$ there exists $n\in\N$ such that $\phi\equiv\sigma_1(\bar{n})$ is provable in a fixed base theory.)
The
\emph{uniform $\Sigma_1$-reflection principle for $T$,} $\Sigma_1 RFN_{T}$, is the
following schema for all $\Sigma_1$ sentences $\sigma(x)$ with one free variable $x$
\[
\forall x\forall u(Pr_T(u,\lceil\sigma(\bar{x})\rceil)\to \sigma(x)),
\]
where $\lceil\sigma(\bar x)\rceil$ denotes the function that assigns the G\"odel number of the formula  $\sigma(\bar x)$ to a given $x$, and
$Pr_T(u,v)$ says that $u$ is a proof of $v$ in
$T$. The principle is true if $T$ is $\Sigma_1$-sound, i.e., $T$ does not prove a false $\Sigma_1$ sentence. This schema can be axiomatized by a \emph{single sentence} using a partial truth definition for $\Sigma_1$ formulas (a universal $\Sigma_1$ formula). Therefore the principle is called {\it ``uniform''} and abbreviated with capital letters.
%% $\Sigma_1 RFN_{T}$ is a
%% $\Pi_2$ sentence and every $\Pi_2$ sentence provable in $T$ can be
%% derived from an instance of the principle in a weak base theory
%% (see~\cite{smorynski77}).

In order to get a meaningful finite version of  $\Sigma_1 RFN_{T}$ we
have to make a couple of modifications. We start by defining a finite
$\Sigma_1^b$ reflection principle for one formula.

\begin{definition}
Let $T$ be a theory, let $\alpha(x)$ be a $\Sigma^b_1$ formula and
let $n\in\N$. Then
{$\Sigma_1^bRfn_T^\alpha(\bar n)$} 
will denote the sentence:
\[
\forall u,x, |u|\leq \bar{n},|x|\leq \bar{n} 
\left(Pr_T(u,\lceil\alpha(\bar{x})\rceil)
\to \alpha(x)\right).
\]
\end{definition}

Having defined the reflection principle for one formula, we can study
the schema, i.e., the set of sentences 
$\Sigma_1^bRfn_T^\alpha(\bar n)$ for all $\Sigma^b_1$ formulas, 
but it is more interesting to have 
a single sentence for every $n$ from which all instances are derivable by short proofs. 
%% In the finite case that we are
%% looking at this means that for every $n$ we have a single sentence
%% that replaces the special cases (in the sense of Lemma~\ref{l-3.2}
%% below).
To this end we need a universal $\Sigma^b_1$ formula. One can
construct a $\Sigma^b_1$ formula $\mu_1$ such that for every $\Sigma^b_1$ formula
$\alpha(x)$ there exist a natural number $e$ and a polynomial $p$
such that 
\bel{e-univ} 
|z|\geq p(|x|)\to(\alpha(x)\ \equiv\ \mu_1(\bar e,x,z)) 
\ee 
is provable in $S^1_2$ (see~\cite{HP}, page 336). The sentences that we are going to define are essentially $\Sigma_1^bRfn_T^{\mu_1}(\bar n)$.

\begin{definition}
The \emph{finite uniform $\Sigma^b_1$ principle} is the
sequence of sentences $\Sigma_1^bRFN_T(\bar n)$, $n\in\N$, defined by
\[
\forall e,u,x,z(|e|,|u|,|x|,|z|\leq \bar{n}\wedge 
Pr_T(u,\lceil\mu_1(\bar{e},\bar{x},\bar{z})\rceil) 
\to \mu_1(e,x,z)).
\]
\end{definition}

%% The proof that  $\Sigma_1^bRFN_T(\bar n)$ is true for all $n$ and $T\in\cal T$ is the same as for $\Sigma_1^bRfn_T^\alpha(\bar n)$, the only difference being that $\mu_1$ has more parameters than $\alpha$.

\bl\label{l-3.2}
For every $\Sigma^b_1$ formula $\alpha(x)$, there exist polynomials $q$
and $r$
such that  $S^1_2$-proofs of the sentences
\[
\Sigma_1^bRFN_T(\overline{q(n)})\to\Sigma_1^bRfn_T^\alpha(\bar n)
\]
can be constructed in time $r(|n|)$.  
\el 
\bprf 
Let $e\in\N$ and $p$ be such that (\ref{e-univ}) is provable in
$S^1_2$. Let $n\in\N$ be such that $n\geq|e|$ and let $m=p(n)$. The
following argument can be done in $S^1_2$.
\begin{quote}
Suppose $|u|,|x|\leq n$ and $Pr_T(u,\lceil\alpha(\bar{x})\rceil)$. Then we also have
$|u|,|x|\leq m$ and, since (\ref{e-univ}) is provable in $T$, we have 
$Pr_T(u',\lceil\mu_1(\bar{e},\bar{x},\overline{2^m})\rceil)$ for
some $u'$. The proof $u'$ is constructed from $u$ using the proof of
(\ref{e-univ}) in $T$, which adds only a constant to the length and a small
part in which this sentence is instantiated for the numerals $\bar{x}$
and $\overline{2^m}$. This makes the proof $u'$ at most polynomially
longer than $m$. Let $m'$ be this polynomial bound. Applying 
$\Sigma_1^bRFN_T(\overline{m'})$, we get 
$\mu_1(\bar{e},\bar{x},\overline{2^m})$. Then
using (\ref{e-univ}) in $S^1_2$, we finally get 
$\alpha(\bar{x})$.
\end{quote}
Now we only need to observe that the above $S^1_2$ proof was
explicitly constructed and the number of steps and the length of the
formulas involved are of length polynomial in $|n|$.
\eprf

% The following is a basic fact about this principle.

\begin{corollary}\label{l-rfn}
Let $S,T\in\cal T$. Suppose that 
\ben
\item $T\vdash \forall x.\phi(x)$, where $\phi\in\Sigma^b_1$, and 
\item $S$-proofs of the sentences $\Sigma_1^bRFN_T(\bar n)$ can be
constructed in polynomial time in $n$. 
\een
Then $S$-proofs of the sentences 
$\phi(\bar m)$ 
can be constructed in time $r(|m|)$ for some polynomial~$r$.
\end{corollary}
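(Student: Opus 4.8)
The plan is to feed, for each $m$, a short $T$-proof of $\phi(\bar m)$ into the single-formula reflection principle $\Sigma_1^bRfn_T^\phi(\bar n)$ and read off $\phi(\bar m)$ by modus ponens, where the parameter $n$ is chosen polynomially in $|m|$. First I would apply Lemma~\ref{l-3.2} with $\alpha:=\phi$ to obtain polynomials $q$ and $r_0$ such that $S^1_2$-proofs of
\[
\Sigma_1^bRFN_T(\overline{q(n)})\to\Sigma_1^bRfn_T^\phi(\bar n)
\]
are constructible in time $r_0(|n|)$; since $S^1_2\sub S$, these are also $S$-proofs. Combining this with hypothesis~(2), which supplies $S$-proofs of $\Sigma_1^bRFN_T(\overline{q(n)})$ constructible in time polynomial in $q(n)$ (just instantiate the parameter at $q(n)$), a single modus ponens yields $S$-proofs of $\Sigma_1^bRfn_T^\phi(\bar n)$ whose construction time is bounded by a polynomial in $n$.

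Next I would produce the instantiated antecedent. By hypothesis~(1) fix once and for all a $T$-proof $d$ of $\forall x.\phi(x)$. For a given $m$, one universal instantiation turns $d$ into a $T$-proof $P_m$ of $\phi(\bar m)$; because $\bar m$ has length $O(|m|)$ and $\phi$ is fixed, both the length of $P_m$ and the length $|g_m|$ of its G\"odel number $g_m$ are polynomial in $|m|$, and $P_m$ is constructible in time polynomial in $|m|$. Let $h_m$ be the G\"odel number of $\phi(\bar m)$, so $|h_m|$ is polynomial in $|m|$ as well. Invoking Fact~\ref{f-1} in its effective form, I obtain a constructible $S^1_2$-proof of $Pr_T(\overline{g_m},\overline{h_m})$ of length polynomial in $|m|$. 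A short $S^1_2$-proof evaluates the term $\lceil\phi(\bar m)\rceil$ to $\overline{h_m}$, and short $S^1_2$-proofs verify the bounds $|\overline{g_m}|\leq\bar n$ and $|\bar m|\leq\bar n$ once $n$ is taken to be a polynomial in $|m|$ dominating both $|g_m|$ and $|m|$. Together these give a constructible $S$-proof, of length polynomial in $|m|$, of the fully instantiated antecedent
\[
|\overline{g_m}|\leq\bar n\ \wedge\ |\bar m|\leq\bar n\ \wedge\ Pr_T(\overline{g_m},\lceil\phi(\bar m)\rceil).
\]

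Finally I would assemble the two pieces. Fixing $n$ as above (so that $|n|=O(\log|m|)$ and $\Sigma_1^bRfn_T^\phi(\bar n)$ is itself short), I bounded-instantiate the universally quantified reflection instance at $u:=\overline{g_m}$ and $x:=\bar m$, discharge the two length conditions and the $Pr_T$-atom with the subproofs just described, and apply modus ponens to derive $\phi(\bar m)$. Since $n$ is polynomial in $|m|$, the reflection instance is constructible in time polynomial in $n$, hence polynomial in $|m|$, while the antecedent and the concluding propositional steps are constructible in time polynomial in $|m|$ directly; concatenating them yields an $S$-proof of $\phi(\bar m)$ constructible in time $r(|m|)$ for a suitable polynomial $r$.

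I expect the main obstacle to be the parameter bookkeeping of the second step. One must verify that the G\"odel number of the instantiated $T$-proof $P_m$ has length polynomial in $|m|$ — so that the witness $u=\overline{g_m}$ genuinely meets the bound $|u|\leq\bar n$ with $n$ polynomial in $|m|$ — and that Fact~\ref{f-1} can be used \emph{constructively}, turning $P_m$ into an explicitly built short $S^1_2$-proof of the matching $Pr_T$-atom, where the match to the universal-formula term $\lceil\phi(\bar x)\rceil$ of the reflection principle is mediated by the term-evaluation proof $\lceil\phi(\bar m)\rceil=\overline{h_m}$. This is precisely the point at which hypothesis~(1) is converted into the input demanded by the reflection machinery of hypothesis~(2).
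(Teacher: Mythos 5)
Your proposal is correct and follows essentially the same route as the paper's proof: combine Lemma~\ref{l-3.2} (with $\alpha:=\phi$) with hypothesis~(2) to obtain constructible $S$-proofs of the single-formula reflection instance, then use the short $T$-proof of $\phi(\bar m)$ obtained by instantiating $\forall x.\phi(x)$, converted via Fact~\ref{f-1} into a short $S$-provable $Pr_T$-statement, and conclude by instantiation and modus ponens with the parameter $n$ chosen polynomially in $|m|$. You merely make explicit the parameter bookkeeping and the constructive use of Fact~\ref{f-1} that the paper's terser proof leaves implicit.
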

% Then the following holds true.
% \ben
% \item If the sentences $\Sigma_1^bRFN_T(\bar n)$ have $S$-proofs of
%   polynomial (in $n$) length, then the sentences $\exists y.\phi(\bar
%   m,y)$ have $S$-proofs of length $q(|m|)$ for some polynomial $q$.
\begin{proof}
  Since $\forall x.\phi(x)$ is provable
  in $T$, the sentences $\phi(\bar m)$ have
  $T$-proofs of length bounded by $q(|m|)$ for some polynomial
  $q$. This is provable in $S^1_2$, so also in $S$. According to the
  assumption about $S$ and by Lemma~\ref{l-3.2}, one can construct in
  polynomial time proofs of $\Sigma^b_1Rfn^\phi_T(\overline{q(|m|)}$ in
  polynomial time in $|m|$. Thus we get $S$-proofs of $\phi(\bar m)$ in polynomial time.
\end{proof}

% DOES THIS PRINCIPLE HAVE SHORT PROOFS IN $T$? I THINK IT DOES, USING
% THE SAME PROOF AS FOR $Con_T(\bar n)$.

Using $\Sigma_1^bRFN_T(\bar n)$, we can state a conjecture similar to our
conjecture about 
$Con_T(\bar n)$. 

\begin{conjecture}{\sf RFN$^N_1$}\label{RFNN}
  For every $S\in\cal T$, there exists $T\in\cal T$ such that the
  lengths of 
  $S$-proofs of $\Sigma_1^bRFN_T(\bar n)$ cannot be bounded by $p(n)$ for any polynomial $p$.
\end{conjecture}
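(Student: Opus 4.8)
Since {\sf RFN}$^N_1$ is stated as a conjecture (and, like {\sf CON}$^N$, it provably entails complexity separations we cannot currently establish), I would not aim at an unconditional proof. The natural target is to \emph{derive} it from the consistency conjecture {\sf CON}$^N$ already in hand, exploiting the fact that a finite reflection principle is a \emph{strengthening} of a finite consistency statement. So the plan is to prove the implication {\sf CON}$^N \Rightarrow$ {\sf RFN}$^N_1$, which shows the new conjecture is no stronger than the old one and, ideally, to push on to an equivalence.

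The heart of the argument is a short-proof reduction of finite consistency to finite reflection. Let $\bot$ be a fixed false $\Sigma^b_1$ sentence, say $0=1$, regarded as a formula $\alpha(x)$ with a dummy free variable $x$. Feeding $\alpha=\bot$ into Lemma~\ref{l-3.2}, I obtain polynomials $q,r$ and $S^1_2$-proofs, constructible in time $r(|n|)$, of the sentences
\[
\Sigma_1^bRFN_T(\overline{q(n)})\to\Sigma_1^bRfn_T^{\bot}(\bar n).
\]
The instance $\Sigma_1^bRfn_T^{\bot}(\bar n)$ asserts precisely that no $u$ with $|u|\leq n$ is a $T$-proof of $\bot$. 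Since $\bot$ is refutable in $S^1_2$ by a proof of constant length, and since a $T$-derivation of contradiction of length $\leq n$ yields a $T$-proof of $0=1$ only polynomially longer (and conversely), the sentences $\Sigma_1^bRfn_T^{\bot}(\bar m)$ and $Con_T(\bar n)$ are inter-derivable by polynomial-length $S^1_2$-proofs once $m$ is a suitable polynomial in $n$. Composing, I get polynomial-length $S^1_2$-proofs (hence $S$-proofs) of $\Sigma_1^bRFN_T(\overline{q(n)})\to Con_T(\bar n)$ for a polynomial $q$.

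With this in place the derivation is routine. Given $S\in\cal T$, apply {\sf CON}$^N$ to get $T\in\cal T$ whose sentences $Con_T(\bar n)$ have no $S$-proofs of length polynomial in $n$. I claim the same $T$ witnesses {\sf RFN}$^N_1$ for $S$: if $S$-proofs of $\Sigma_1^bRFN_T(\bar n)$ had length $\leq p(n)$, then an $S$-proof of $\Sigma_1^bRFN_T(\overline{q(n)})$ of length $\leq p(q(n))$, combined by modus ponens with the short $S$-proof of the implication above, would yield $S$-proofs of $Con_T(\bar n)$ of length polynomial in $n$, contradicting the choice of $T$. Hence no polynomial bound can exist, and {\sf RFN}$^N_1$ follows from {\sf CON}$^N$.

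Two points are where I expect the real work to lie. First, one must verify that the canonical formalizations genuinely line up: that $Con_T(\bar n)$ is equivalent, by honestly short $S^1_2$-proofs and only a polynomial change of the length parameter, to reflection for the fixed false sentence — this depends on the precise encoding of proofs and of the universal formula $\mu_1$, and is the delicate (if routine) part of the forward direction. The more serious obstacle is the \emph{converse}, {\sf RFN}$^N_1\Rightarrow$ {\sf CON}$^N$, which one would need for an equivalence. Here the easy inequality runs the wrong way: reflection is strictly stronger than consistency, so hardness of proving reflection for $T$ does not transfer to hardness of proving consistency for $T$, and one would have to manufacture, from a reflection-hard $T$, an auxiliary theory $T'\in\cal T$ whose finite consistency is short-proof equivalent to $\Sigma_1^bRFN_T(\bar n)$. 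I expect that construction to be the main difficulty, and it is not clear to me that it can be carried out without an additional idea.
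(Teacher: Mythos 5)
Your proposal is correct and matches the paper exactly: since {\sf RFN}$^N_1$ is a conjecture, the paper offers no unconditional proof, only the observation (immediately after Lemma~\ref{l-3.2}) that taking $\alpha$ to be $0=1$ makes $\Sigma_1^bRfn_T^\alpha(\bar n)$ short-proof equivalent to $Con_T(\bar n)$, whence $\Sigma_1^bRFN_T(\overline{q(n)})\to Con_T(\bar n)$ has short $S^1_2$-proofs and {\sf CON}$^N$ implies {\sf RFN}$^N_1$ --- precisely your derivation, including the choice of witness $T$. Your closing caveat is also consistent with the paper, which claims only this one direction and leaves the converse open.
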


When $\alpha$ is $0=1$, then $\Sigma_1^bRfn_T^\alpha(\bar n)$ is equivalent
 to $Con_T(\bar n)$ and this equivalence has an $S^1_2$-proof of length polynomial in $|n|$. 
Thus, by Lemma~\ref{l-3.2}, there exists a polynomial $q$ such that $\Sigma_1^bRFN_T(\overline{q(n)})\to Con_T(\bar n)$ has an $S^1_2$-proof of length polynomial in $|n|$. 
%Since $\Sigma_1^bRFN_T(\overline{p(n)})$ implies $Con_T(\bar n)$, 
Consequently, Conjecture~{\sf CON$^N$} implies Conjecture~{\sf RFN$^N_1$}. 

We will prove that Conjecture~{\sf RFN$^N_1$} implies {\bf NP$\neq$\bf coNP}.

\begin{proposition}
If {\bf NP$=$\bf coNP}, then there exists  $S\in\cal T$ such that for
all $T\in\cal T$, the lengths of $S$-proofs of $\Sigma_1^bRFN_T(\bar n)$ can be bounded by $p(n)$ for some polynomial $p$.
\end{proposition}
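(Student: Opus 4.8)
The plan is to exploit two classical consequences of {\bf NP}$=${\bf coNP}: that the universal $\Sigma^b_1$ formula $\mu_1$ is equivalent over $\N$ to a $\Pi^b_1$ formula, and that, by the Cook--Reckhow theorem, there is a polynomially bounded propositional proof system $P$. Before using them I would first record that each $\Sigma_1^bRFN_T(\bar n)$ is in fact a \emph{true} sentence: the only way it could fail is if $T$ had a proof of a \emph{false} closed $\Sigma^b_1$ sentence $\mu_1(\bar e,\bar x,\bar z)$, but $S^1_2\subseteq T$ refutes every false closed bounded sentence, so this would contradict the consistency of $T$. Hence the task is genuinely one of bounding proof \emph{length}, and a sound choice of $S$ is not a priori obstructed.

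Fix, using {\bf NP}$=${\bf coNP}, a $\Pi^b_1$ formula $\nu(e,x,z)$ with $\N\models\forall e,x,z\,(\mu_1(e,x,z)\equiv\nu(e,x,z))$ and a polynomially bounded system $P$, and put
\[
S:=S^1_2+\mathrm{RFN}(P)+\forall e,x,z\,(\mu_1(e,x,z)\equiv\nu(e,x,z)),
\]
where $\mathrm{RFN}(P)$ is the soundness statement ``every $P$-provable propositional formula is a tautology''. Both added axioms are true in $\N$, so $S$ is consistent, and both amount to finitely many polynomial-time recognizable axioms, so $S\in\cal T$. The decisive point is that $S$ depends only on the assumption {\bf NP}$=${\bf coNP} and not on $T$.

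Given $T$, I would argue inside $S$ as follows. Using the equivalence axiom together with (\ref{e-univ}), $S$ proves by a short proof that $\Sigma_1^bRFN_T(\bar n)$ is equivalent to the sentence obtained by replacing the consequent $\mu_1(e,x,z)$ by $\nu(e,x,z)$; the latter is a \emph{true} $\Pi^b_1$ sentence all of whose quantifiers range below $2^{\bar n}$, so its propositional translation $\tau_n$ is a tautology of size polynomial in $n$. By the choice of $P$ it has a $P$-proof $\pi_n$ of size polynomial in $n$. Now $S^1_2$ proves the correctness of the translation for $\Pi^b_1$ formulas, and $\mathrm{RFN}(P)$ allows $S$ to convert $\pi_n$ into a proof that $\tau_n$ is satisfied by \emph{every} assignment; instantiating the assignment at an arbitrary tuple below $2^{\bar n}$ yields a polynomial-length $S$-proof of the $\Pi^b_1$ sentence, and hence of $\Sigma_1^bRFN_T(\bar n)$. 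For fixed $T$ all objects involved have size polynomial in $n$, giving the bound $p(n)$.

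The hard part will be the lifting step, and specifically making the \emph{correctness of the translation} available inside a single fixed theory. The reduction of the inner $\Sigma^b_1$ matrix to $\Pi^b_1$ is essential, since otherwise the translation is not a tautology but a formula demanding existential witnesses; yet $S^1_2$ alone does not prove $\mu_1\equiv\nu$ (that would be tantamount to proving the collapse). I sidestep this by placing the \emph{true} equivalence and the \emph{true} reflection $\mathrm{RFN}(P)$ into $S$ as axioms, which preserves consistency. What then remains is the routine but careful verification that these two ingredients suffice for the translation-soundness argument of $S^1_2$ to go through uniformly and that every length stays polynomial in $n$. Equivalently, one may avoid propositional proof systems altogether and phrase the argument through a partial truth definition for $\Sigma^b_1$ formulas, which under {\bf NP}$=${\bf coNP} can be taken to be $\Pi^b_1$; one then follows the partial-truth-definition method underlying the upper bound of Theorem~\ref{t-bounds}.
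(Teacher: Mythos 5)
Your proof is correct, but it takes a genuinely different route from the paper's. The paper's argument is a brute-force axiomatization: assuming ${\bf NP}={\bf coNP}$, the truth of every sentence in a syntactic class $\Gamma$ containing the sentences $\Sigma_1^bRFN_T(\bar n)$ becomes testable in nondeterministic polynomial time, so one takes \emph{all} true such sentences (suitably padded, with accepting computations of the verifying machine encoded into the padding so that the axiom set is polynomial-time decidable) as axioms of $S$; each target sentence then follows from an axiom by an essentially trivial derivation, and no formalized metamathematics is needed beyond the padding trick. You instead build a \emph{fixed finite} extension of $S^1_2$ and run the standard theory/proof-system correspondence: the collapse is used twice, once to get a true $\Pi^b_1$ equivalent $\nu$ of the universal $\Sigma^b_1$ formula $\mu_1$ (added as an axiom), and once, via Cook--Reckhow, to get a polynomially bounded proof system $P$ whose reflection principle is added as well; then each $\Sigma_1^bRFN_T(\bar n)$, rewritten with $\nu$, translates to a poly-size tautology $\tau_n$, the short $P$-proof $\pi_n$ is feasibly verified in $S^1_2$ (the analogue of Fact~\ref{f-1}), $\mathrm{RFN}(P)$ yields $Taut(\bar\tau_n)$, and the $S^1_2$-provable soundness of the propositional translation for $\Pi^b_1$ formulas recovers the first-order sentence. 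The step you flag as the hard part is exactly the standard translation-correctness fact from the $PV$/$EF$-style correspondence (Cook; Kraj\'{\i}\v{c}ek's book), and it is applied here only to single instances $\bar n$ with the concrete proof $\pi_n$ in hand, so no provable totality of the map $n\mapsto\pi_n$ is needed --- this is why the argument goes through. Your preliminary observation that the target sentences are true for consistent $T$ (needed in both proofs, since both constructions only work for true sentences) is also right. What each approach buys: the paper's proof is shorter and self-contained but produces an ad hoc, infinitely axiomatized theory and needs the padding gymnastics; yours yields a cleaner theory, finitely axiomatized over $S^1_2$, and isolates precisely which two consequences of ${\bf NP}={\bf coNP}$ are used, at the cost of invoking the heavier (though standard and citable) formalized-translation machinery; your closing remark that one could instead follow the partial-truth-definition method behind the upper bound of Theorem~\ref{t-bounds} is a plausible third route.
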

\begin{proof}
  The basic idea is to take some base theory and add all
  sentences of the form $\Sigma_1^bRFN_T(\bar n)$ that are true as axioms,
  disregarding whether or not $T$ is consistent. Assuming {\bf NP$=$\bf coNP}, it is possible to test these sentences in nondeterministic polynomial time for each $T$. Since the polynomial bound is different for different theories $T$ we have to apply padding.

Here is a sketch of a proof in more detail. Assume {\bf NP$=$\bf coNP}. Instead of just the sentences expressing $\Sigma_1^bRFN_T(x)$, we will consider all formulas with one free variable $x$ with bounded quantifiers in which the bounds are either of the form $|y|\leq p(x)$, where $y$ the quantified variable and $p$ is a term expressing a polynomial, or of the usual form $y\leq t(x)$, where $t$ is a term of $S_2$. Let $\Gamma$ denote the class of such formulas. Our assumption implies that every formula $\phi(x)\in\Gamma$ is equivalent to a formula from this class in which all quantifiers are existential. Consequently, for every $\phi(x)\in\Gamma$, there exists a nondeterministic Turing machine $M$ that accepts only true sentences of the form $\phi(\bar n)$ and runs in time polynomial in $n$. With some additional (and tedious) work, one can show:

\bcl
There exists a nondeterministic Turing machine $M$ that accepts true sentences of the form $\phi(\bar n)$, for $\phi(x)\in\Gamma$ and $n\in\N$, such that for every  $\phi(x)\in\Gamma$, there exists a polynomial $p_\phi$ such that on sentences $\phi(\bar n)$ the machine always stops after  $p_\phi(n)$ steps.
\ecl

For every  $\phi(\bar n)\in\Gamma$ and $n,m\in\N$, let a padded version of  $\phi(\bar n)$ be
\[
\phi(\bar n)_m:= \phi(\bar n)\vee(\underbrace{0=1\wedge\dots\wedge 0=1}_m).
\]
Clearly, $\phi(\bar n)$ is derivable from $\phi(\bar n)_m$ in predicate calculus using a proof whose length is polynomial (in fact, linear) in the length of the sentence $\phi(\bar n)_m$.
 Using a simple modification of the machine $M$ from the previous claim, one can prove:

\bcl
There exists a nondeterministic \emph{polynomial time} Turing machine $M'$ that accepts only some true sentences of the form $\phi(\bar n)_m$, for $\phi(x)\in\Gamma$, $n,m\in\N$ and such that for every  $\phi(x)\in\Gamma$, there exists a polynomial $p_\phi$ such that for all $n\in\N$, $M'$ accepts $\phi(\bar n)_m$ for some $m\leq p_\phi(n)$.
\ecl

Let $S'$ be the theory axiomatized by the sentences accepted by $M'$. By construction, every true sentence of the form $\phi(\bar n)$, $\phi(x)\in\Gamma$ has an $S'$-proof of length polynomial in $n$; in particular, this holds true for true sentences of the form  $\Sigma_1^bRFN_T(\bar n)$.
The only issue is that the set of axioms is in {\bf NP} and, maybe, not in {\bf P}. This can also be fixed by padding: we can encode accepting computations into padding. E.g., by using $0=1$ and $1=0$, we can encode an arbitrary bit string and let a padded formula be accepted iff the padding encodes an accepting computation of $M'$ on input $\phi(\bar n)_m$.
\end{proof}

The reason for introducing the conjecture about  $\Sigma_1^bRFN$  is that it
enables us to connect diverging branches of so far postulated
conjectures, as we will see shortly. One can certainly study similar statements based on stronger reflection principles for classes of formulas  $\Sigma^b_2,\Sigma^b_3,\dots$. The strength of these principles decreases with increasing indexes, so they are not interesting if we are looking for stronger conjectures. However, the study of these principles may reveal further interesting connections.

%% In fact, one can show that it suffices to assume
%% {\bf NE}={\bf coNE} in the proposition. (Recall that {\bf E} is the
%% class of languages accepted in time $2^{O(n)}$; similarly are defined the
%% nondeterministic and co-nondeterministic versions of this class.)

%%  Clearly, postulating such conjectures for
%% reflection principles is not a way to find strongest possible
%% conjectures. The reason for introducing this conjecture is that it
%% enables us to connect diverging branches of so far postulated
%% conjectures, as we will see shortly.

\subsection{What is the finite G\"odel theorem?}

We finish this section with a remark concerning the question what
should be called the finite G\"odel theorem. If Conjecture~{\sf CON$^N$}
were proven true, we would certainly advocate calling it the finite
G\"odel theorem. However, one can also argue that the connection is
different. Note that if $T$ proves $Con_S$, then $T$-proofs of
$Con_S(\bar n)$ are very short; they are of logarithmic length in $n$, because
the length of $Con_S(\bar n)$ is logarithmic (recall that we are using
binary numerals) and this sentence follows from $Con_S$ by
substitution (if we formalize $Con_S$ as $\forall x.Con_S(x)$). Using
this fact, we can derive G\"odel's theorem from Friedman's lower bound
$n^\epsilon$  on the lengths of $T$-proofs of $Con_T(\bar n)$. So
Friedman's lower bound can also be viewed as the finite G\"odel
theorem.

Proving G\"odel's theorem in this roundabout way is certainly not
natural, but in some cases it may be useful. Using estimates on finite
consistency statements, we proved~\cite{pudlak90} that $S_2$ does not prove
bounded consistency of the apparently weaker theory $S^1_2$, which ruled
out an approach to the separation problem of these two theories. (Bounded
consistency means that we only consider proofs in which all formulas
are bounded.)

\section{Fast growing functions and hard search problems}\label{sec4}

An important property of first-order theories studied in classical
proof theory is their strength measured by the set of arithmetical
sentences provable in them. Among the arithmetical sentences the most
important role is played by $\Pi_1$ and $\Pi_2$ sentences. 
%% Mention ordinal analysis?
%%  Practically
%% for all naturally defined theories, it holds true that if $T$ proves
%% more arithmetical sentences than $S$, then it also proves more $\Pi_2$
%% sentences.
%  In fact, typically $T$ proves some sentences that are not
% derivable from $S+\Pi_1(T)$, where $\Pi_1(T)$ denotes all $\Pi_1$
% sentences provable in $T$.
 A proper $\Pi_2$ sentence, a
sentence that is not equivalent to a $\Pi_1$ sentence, expresses the
fact that some function is total. Specifically, $\forall x\exists
y.\phi(x,y)$, where $\phi$ is a bounded formula, can be interpreted as saying  that there
exists a computable function such that $\forall x.\phi(x,f(x))$. If we
cannot write it equivalently using a formula $\forall x.\psi(x,y)$,
where in $\psi$ all quantifiers are bounded, then $f$ has to grow
faster than all functions defined by the terms of the
theory. Moreover, for pairs of natural theories $S$ and $T$ with $T$
essentially stronger than $S$, there are provably total
computable functions in $T$ that cannot be bounded by computable
functions provably total in $S$. One can say that {\it ``$T$ proves
  the existence of larger numbers than $S$''.} This intuition can be
made more precise using cuts of nonstandard models of arithmetic in
which the arithmetical theories of $S$ and $T$ are
satisfied: in general, $T$ requires longer cuts than $S$.%
\footnote{Consider cuts that contain a fixed nonstandard element.}

\medskip {\bf Remark.} {\small It is important to realize what ``provably
total'' means. For a given theory and a computable function $f$, we can
always find a $\Sigma_1$ definition for which the totality of $f$ is
not provable (e.g., given a defining formula $\phi(x,y)$, we can
extend it by adding the consistency of $T$, i.e.,  $\phi(x,y)\wedge
Con_T(x)$). So when we say that $f$ is provably total, we mean
that $f$ is provably total \emph{for some $\Sigma_1$ definition of $f$}.}

\subsection{Total polynomial search problems}\label{subsec4.1}

We are interested in the exponential domain, which means that we only
consider functions $f$ such that the length of $f(x)$ is bounded by
$p(|x|)$ for some polynomial $p$, so it does not make sense to compare the
growth rate of the functions. Instead, we study the complexity of these
functions. The class of sentences corresponding to $\Pi_2$ are
$\forall\hat{\Sigma}^b_1$ sentences---the sentences starting with unbounded
universal quantifier followed by a $\hat{\Sigma}^b_1$ sentence. Essentially, this class consists of sentences of the form
\bel{e-total}
\forall x\exists y(|y|\leq p(|x|)\wedge\phi(x,y)),
\ee
where $\phi$ is a formalization of a polynomial time relation (i.e., $\phi\in\Delta^b_1$) and $p$ is
some polynomial. There is a computational task naturally associated
with such sentences. Since this is important, we define it formally.

\begin{definition}
A \emph{total polynomial search problem} is given by a pair $(p,R)$,
where $p$ is a polynomial and $R$ is a binary relation such that
\ben
\item $R$ is decidable in polynomial time,
\item $\N\models \forall x\exists y(|y|\leq p(|x|)\wedge R(x,y))$.
\een
The computational task is, for a given $x$, find $y$ such that $|y|\leq
p(|x|)\wedge R(x,y)$.
\end{definition}
The class of all total polynomial search problems will be denoted by
{\bf TFNP}.%
\footnote{The abbreviation {\bf TFNP} is standard, but is
  rather misleading; the class is not a class of functions and it is
  not defined using {\bf NP} relations. Therefore we used {\bf TPS}
  in~\cite{kniha}.} 
 Here are two examples of {\bf TFNP} problems.

\medskip{\bf Example 1.} This example is based on the Pigeon-Hole Principle, which says that there is no one-to-one mapping from an $N+1$-element set to an $N$-element set. The computational task associated with this principle is: given a mapping from an $N+1$-element set to an $N$-element set, find a ``collision'', which is a pair $x\neq x'$ such that $f(x)=f(x')$. This problem is algorithmically trivial if the mapping is given as a list of pairs $(x,f(x))$. In this case $N$ is less than the input length. However, if the problem is presented so that $N$ is exponential in the input length, no polynomial time algorithm is known. Such a representation can be defined using Boolean circuits, or polynomial time algorithms that compute the function $f$. In fact, researchers in cryptography believe that the problem is hard even if the mapping is from $[N]$ to $[M]$ for $M$ much smaller than $N$. These \emph{hash functions} are used in various protocols.

A {\bf TFNP} problem based on the Pigeon-Hole Principle can formally be defined as follows. Take a polynomial time computable function $f(r,x)$; think of $f$ as a set of polynomial time computable functions of one variable $x$ parametrized by $r$. Define a binary relation computable in polynomial time by
\[
R(r,u):\equiv\ (u\leq r\wedge f(r,u)\geq r)\ \vee\ \exists x,x'\leq r (u=(x,x')\wedge f(r,x)=f(r,x')).
\]
In this formula, $u$ is a witness of the fact that $f$ does not map $\{0\dts r\}$ into $\{0\dts r-1\}$ or a witness of a collision.
A polynomial bound on $|u|$ is determined by  a polynomial bound on the lengths pairs of elements less than $r$.

\medskip{\bf Example 2.} Our second example is based on the problem of factoring integers. Again the problem is nontrivial only if the number to be factored is presented in binary (decimal etc.) notation, in which case it is exponential in the input length. Since the search problem must have a solution for every number $N$, we have to distinguish the cases when $N$ is prime and when it is composite. It is well-known that this is decidable in polynomial time. Formally, we define a binary relation computable in polynomial time by
\[
Q(N,M):\equiv\ N\mbox{ is prime }\vee\ (1<M<N\wedge M\mbox{ divides }N).
\]
The bound on $M$ is simply $|M|\leq |N|$. A solution is any number $M$, $|M|\leq|N|$, if $N$ is prime, or a proper factor if $N$ is composite.

\medskip
Having the concept of a total polynomial search problem, we can now replace the \emph{growth rate} of
functions by the \emph{computational complexity} of finding solutions. Not surprisingly, the situation
is much less clear than in the classical setting. Firstly, we can only
hypothesize about the computational complexity of specific search
problems. But this is what we expected and are ready to face.
Secondly, we do not have a quantitative measure of complexity that we
could apply to this kind of computational problems. We can distinguish
problems for which the task is solvable in polynomial time from those
for which it isn't, but some evidence suggests that there are also
distinct classes of problems that are not solvable in polynomial time
and have different complexity. To compare the complexity of different
problems, we use reductions. Polynomial reductions are known for decision problems and used, in particular, in the theory of {\bf NP} completeness. For
{\bf TFNP} there is also a natural concept of polynomial
reduction. (Note that {\bf TFNP} is not a class of decision problems, so we do need
a different concept.)

\begin{definition}[\cite{jpy}]\label{d-5}
Let $R$ and $S$ be total polynomial search problems. We say
that $R$ is \emph{polynomially reducible} to $S$ if $R$ can be solved in
polynomial time using an oracle that gives solutions to $S$. We say
that $R$ and $S$ are \emph{polynomially equivalent} if there are polynomial
reductions in both directions. We say that $R$ is \emph{many-one polynomially
  reducible} to $S$, if it is polynomially reducible using one query
to the oracle for~$S$. 
\end{definition}
Many-one polynomial reducibility can be equivalently defined by the
condition: {\it there are functions $f$ and $g$ computable in polynomial
time such that for all $x$ and $z$,}
\[
S(f(x),z)\ \Rightarrow\ R(x,g(x,z)),
\]
were we are assuming that polynomial bounds on the lengths of numbers
involved are implicit in the relations $R$ and $S$.

Reductions enable us to study the structure of {\bf TFNP} and define subclasses. We are interested in classes that are closed under polynomial reductions. One important class is {\bf PHP}, the class of all {\bf TFNP} problems reducible to an instance of the Pigeon-Hole Problem as described in Example~1 above. Several other classes were defined already in the seminal paper~\cite{jpy}. They enable one to show that a problem is probably not solvable in polynomial time. Specifically, if one proves that a problem is complete in one of the well-known classes, it implies that the problem is not solvable in polynomial time unless the class collapses to the bottom class consisting of all problems solvable in polynomial time. 

From the point of view of computational complexity, it is natural to
identify polynomially equivalent problems. However, we should bear in
mind that from the point of view of a particular theory, two
definition of the same problem may behave differently, as we noted
above. We will consider definitions of {\bf TFNP} by $\Delta^b_1$
formulas and for a given theory we will take ``the best possible
definition''. Formally, this is defined as follows.

\begin{definition}
\ben
% \item A \emph{proper definition of a polynomial search problem
%     $(p,R)$} is a definition where $p$ is defined by an arithmetical
%   term and $R$ is defined by a pair of formulas in
% $\Sigma^b_1$ and $\Pi^b_1$ provably equivalent in $S^1_2$.
\item A $\Delta^b_1$ \emph{definition of a {\bf TFNP} problem} $(p,R)$ is a
  pair $(q,\phi)$ where $q$ is a polynomial and $\phi$ is a 
  $\Delta^b_1$ formula such that 
\[
\N\models \forall x,y((|y|\leq p(|x|)\wedge R(x,y))\equiv
(|y|\leq q(|x|)\wedge \phi(x,y))).
\]
\item We say that $(p,P)\in\ ${\bf TFNP} is \emph{provably total in a theory
  $T$}, if for some $\Delta^b_1$ definition $(q,\phi)$ of $(p,P)$, $T$ proves
that 
\[
\forall x\exists y(|y|\leq q(|x|)\wedge\phi(x,y)).
\]
\item The set of all $(p,P)\in\ ${\bf TFNP} provably total in $T$ will be denoted by {\bf TFNP}$(T)$.
The set of all $P\in\ ${\bf TFNP} polynomially reducible to some
$Q\in${\bf TFNP}$(T)$ will be denoted by ${\bf TFNP^*}(T)$.
\een
\end{definition}
Note that according to our definition of the class $\Delta^b_1$ (in  Subsection~\ref{ssec-2.3}), the
formula $\phi$ must be a $\Sigma^b_1$ formula equivalent to a
$\Pi^b_1$ formula \emph{provably in $S^1_2$} (to ensure that it defines a set
in {\bf P} it does not suffice to have a proof in $T$). On the other hand, we do \emph{not} require that a problem $P$ in ${\bf TFNP^*}(T)$ is \emph{provably} reducible to  some  $Q\in${\bf TFNP}$(T)$. 
The difference
between {\bf TFNP}$(T)$ and ${\bf TFNP^*}(T)$ is small; in fact, if we
defined {\bf TFNP} using {\bf NP} relations (see $\overline{\bf TFNP}$ below), these classes would be the same.
% $R(x,y)\vee\exists z(S(f(x),z)\wedge y=g(x,z))$

To characterize low complexity theorems of fragments of arithmetic is
an important problem studied in proof complexity. In particular, we
are interested in sentences that are universal closures of
$\Sigma^b_1$ formulas. Naturally, we want to identify sentences that express the same fact. The best way to do that is to focus on provably total polynomial search problems. Provably total
polynomial search problems of all fragments of bounded arithmetic $S^i_2$,
$i=1,2,\dots,$  have been characterized using combinatorial
principles~\cite{skelley-thapen,BB10,pudlak-thapen}. ($S^i_2$ is $S_2$ with the induction schema~(\ref{PIND}) restricted to $\Sigma^b_i$ formulas.)
For $S^1_2$ they are all {\bf TFNP} problems that are solvable in polynomial time (the lowest class in  {\bf TFNP}). The class of provably total problems of $S^2_2$ turned out to be surprisingly the class \emph{Polynomial Local Search}, a class that had been introduced in~\cite{jpy}. 

Here is another important conjecture.

\begin{conjecture}{\sf TFNP}\label{TFNP}
For every theory $T\in\cal T$ there exists a {\bf TFNP} problem $P$
that is not polynomially reducible to any {\bf TFNP} problem provably total
in $T$. Stated in symbols  ${\bf TFNP^*}(T)\neq{\bf TFNP}$.%
\footnote{We distinguish the complexity class {\bf TFNP} and the conjecture about it {\sf TFNP} by different fonts.}
\end{conjecture}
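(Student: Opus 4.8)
Because this conjecture implies ${\bf P\neq NP}$, no unconditional proof is within reach by present methods, and this shapes the whole plan. The implication itself is immediate: if ${\bf P=NP}$ then every total polynomial search problem is solvable in polynomial time, hence lies in ${\bf TFNP}(S^1_2)\subseteq{\bf TFNP}(T)$ for every $T\in\cal T$, so ${\bf TFNP^*}(T)={\bf TFNP}$ and the conjecture fails outright. The realistic goal is therefore not a direct proof but to fit {\sf TFNP} into the common framework of the paper---to exhibit a single canonical hard problem that works uniformly in $T$, and to derive the conjecture from the consistency and reflection conjectures already on the table.

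The first thing I would examine is the direct diagonalization. For a fixed $T\in\cal T$ the class ${\bf TFNP}(T)$ is recursively enumerable: one enumerates $T$-proofs of sentences $\forall x\exists y(|y|\leq q(|x|)\wedge\phi(x,y))$ with $\phi\in\Delta^b_1$, and by Buss's Witnessing Theorem (Theorem~\ref{t-witnessing}) each such proof yields a polynomial-time witnessing procedure. One could then try to build a total, polynomial-time-checkable search relation that, on a suitably encoded input, disagrees with the $e$-th enumerated problem. Two obstructions appear at once. First, the conjecture demands beating the reduction closure ${\bf TFNP^*}(T)$, not just ${\bf TFNP}(T)$, so one must prove the diagonal problem is not polynomially \emph{reducible} to any enumerated one---a genuine lower bound, not merely a construction. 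Second, maintaining totality of the diagonal problem while preserving its hardness is exactly the kind of step that relativizes, so a naive diagonalization cannot possibly succeed.

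The route I would actually develop is to connect {\sf TFNP} with the consistency conjecture {\sf CON$^N$} and the reflection conjecture {\sf RFN$^N_1$}. Given $T$, one applies {\sf CON$^N$} to obtain a $T'\in\cal T$ whose bounded consistency $Con_{T'}(\bar n)$ has no polynomial-length $T$-proofs, and one tries to package the search for a short inconsistency of $T'$ as a ${\bf TFNP}$ problem; the aim is to argue that membership of such a problem in ${\bf TFNP^*}(T)$ would, after formalization, yield the forbidden short proofs. Lemma~\ref{l-3.2} and Corollary~\ref{l-rfn}, which convert provable totality into short proofs of reflection instances, are the natural bridge here. The hard part---and the crux of the whole enterprise---is that this bridge must simultaneously respect polynomial \emph{reducibility} on the search side and polynomial \emph{proof length} on the logical side, uniformly over all $T\in\cal T$; reconciling the semantic notion of reduction with the syntactic notion of provability is precisely where either one of the sister conjectures must be invoked or the ${\bf P}$ versus ${\bf NP}$ barrier confronted directly.
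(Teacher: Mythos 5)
The statement you were asked to prove is one of the paper's central \emph{conjectures}: the paper offers no proof of it and argues that none is possible by present methods, so there is no official argument to measure yours against. Your opening assessment is therefore correct and matches the paper's own stance. The implication you start from is confirmed by the paper's diagram, where {\sf TFNP} implies the {\sf SAT} conjecture, which implies {\sf RFN}$_1$, which implies ${\bf P}\neq{\bf NP}$; and your direct collapse argument under ${\bf P}={\bf NP}$ is sound, modulo the nicety that a polynomial-time solvable problem is most cleanly placed in ${\bf TFNP^*}(T)$ by reducing it to a trivially total problem, rather than by claiming membership of the given relation in ${\bf TFNP}(S^1_2)$, since ${\bf TFNP}(T)$ requires a provably total $\Delta^b_1$ definition of the \emph{same} relation. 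Your two obstructions to naive diagonalization are also genuine; indeed Lemma~\ref{l-4.1} shows that ${\bf TFNP}(T)$ always has a many-one complete problem, so the conjecture lives or dies with the reduction closure, exactly as you say.

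Where the proposal goes astray is in the route you say you would actually develop. Deriving {\sf TFNP} from {\sf CON}$^N$ is not an available completion strategy but is precisely the second open problem in the paper's concluding section: whether either of {\sf CON} and {\sf TFNP} implies the other is unknown, and the paper even raises the possibility that their relativizations are independent. Worse, the reflection conjecture points in the wrong direction: {\sf RFN}$_1$ sits \emph{below} {\sf TFNP} in the implication diagram (it is a consequence, via the proposition connecting polynomial-time constructible proofs of $\Sigma_1^bRFN_T(\bar n)$ with p-optimal proof systems for {\it SAT}, together with the results of Beyersdorff et al.), so invoking {\sf RFN}$^N_1$ as a source of hardness for {\sf TFNP} reverses the known implications. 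There is also a concrete technical flaw in the sketch: the ``search for a short inconsistency of $T'$'' is not a total search problem, since $T'$ is consistent and the problem has no solutions at all; this is exactly why the paper's consistency-flavoured support for the conjecture takes the form of Herbrand Consistency Search (Theorem~\ref{t-hcon}), where one searches for a \emph{falsifying assignment witnessing consistency} of a universal sentence --- a problem that is total precisely because the theory is consistent. The paper's actual evidence is of this different kind: the equivalence with the nonexistence of a complete {\bf TFNP} problem, the $HCS$ reformulation, the oracle construction of~\cite{herbr}, and the pattern of combinatorial characterizations of the provably total search problems of the theories $S^i_2$. Your instinct to tie unprovability of totality to consistency is close in spirit to that last route, but as formulated it would not yield a total problem, let alone one provably outside ${\bf TFNP^*}(T)$.
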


The weaker statement ${\bf TFNP}(T)\neq{\bf TFNP}$, in plain words, says that, for every theory $T\in\cal T$, there exists a total polynomial search problem $(p,R)$ such that $T$ cannot prove that the problem is total for any proper definition (definition by a $\Delta^b_1$ formula) of $(p,R)$. This means that the unprovability in $T$ is not caused by a particular way we define the problem, but by a semantic property of it that we imagine as high computational complexity. We state the conjecture in the stronger form, ${\bf TFNP^*}(T)\neq{\bf TFNP}$, because ${\bf TFNP}(T)$ may not be closed under polynomial time reductions.

Let us compare this conjecture with the corresponding statement about
fast growing recursive functions. One can easily prove by
diagonalization that for every $T\in\cal T$, there exists a computable
function $f$ which grows faster than any computable function provably
total in $T$. This means that for any computable function $g$ provably
total in $T$, there exists an $n_0$ such that $f(n)>g(n)$ for all
$n\geq n_0$. Thus for any formalization of $f$ by a $\Sigma_1$ formula
$T$ cannot prove that $f$ is total.  In the above conjecture, the
condition that $f$ cannot be bounded by provably total functions is
replaced by the condition that a {\bf TFNP} problem is not
polynomially reducible to {\bf TFNP} problems that are provably total in $T$.

%  So, again, provably total means, provably total for some
% definition by a $\Delta^b_1$ formula (or a formula from a natural
% class defining polynomial time relations). 

All conjectures in this area can be stated in purely complexity theoretical
terms. The above conjecture has an especially simple equivalent form, which
we state now.

\begin{conjecture}{equivalent to {\sf TFNP}}
There is no complete problem in {\bf TFNP}, i.e., there exist no
{\bf TFNP} problem to which all {\bf TFNP}
problems can be reduced.
\end{conjecture}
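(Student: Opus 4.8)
The plan is to prove the two implications separately, working with the negations of the two statements. The negation of Conjecture~{\sf TFNP} is that ${\bf TFNP^*}(T)={\bf TFNP}$ for some $T\in\cal T$, and the negation of the equivalent form is that a complete problem exists; I will show these two are equivalent.

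The easy direction is to assume there is a complete problem $Q=(p,R)\in{\bf TFNP}$ and produce a theory with ${\bf TFNP^*}(T)={\bf TFNP}$. Since $R$ is decidable in polynomial time it has a $\Delta^b_1$ definition $\phi_R$, and since $Q$ is a genuine {\bf TFNP} problem the sentence $\theta_R:=\forall x\exists y(|y|\leq p(|x|)\wedge\phi_R(x,y))$ is true in $\N$. I would take $T:=S^1_2+\theta_R$. This theory holds in $\N$, so it is consistent; it extends $S^1_2$ and has a polynomial-time axiom set, so $T\in\cal T$; and $Q\in{\bf TFNP}(T)$ by construction. Completeness of $Q$ then says every {\bf TFNP} problem reduces to $Q\in{\bf TFNP}(T)$, so ${\bf TFNP^*}(T)={\bf TFNP}$, contradicting Conjecture~{\sf TFNP}.

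For the converse I assume ${\bf TFNP^*}(T)={\bf TFNP}$ for some $T\in\cal T$ and construct a single complete problem $U_T$. The idea is a \emph{universal} problem whose instances are triples $(e,\pi,x)$, where $e$ indexes (via a fixed clocked universal machine) a polynomial-time relation $\phi_e$ together with a polynomial bound $q_e$, $\pi$ is a candidate $T$-proof of the totality statement $\theta_e:=\forall x\exists y(|y|\leq q_e(|x|)\wedge\phi_e(x,y))$, and $x$ is an input. A solution to $(e,\pi,x)$ is any $y$ with $|y|\leq q_e(|x|)\wedge\phi_e(x,y)$ when $\pi$ is a valid $T$-proof of $\theta_e$, and the trivial value $0$ otherwise; both tests run in polynomial time. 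Every $(p,P)\in{\bf TFNP}(T)$ reduces to $U_T$ by a single query: on input $x$ one queries $(e_0,\pi_0,x)$, where $e_0$ indexes a fixed $\Delta^b_1$ definition $(q,\phi)$ of $P$ and $\pi_0$ is a $T$-proof of its totality; since the two definitions agree on witnesses, the returned $y$ solves $P$. Because polynomial reductions compose and ${\bf TFNP^*}(T)={\bf TFNP}$, every {\bf TFNP} problem reduces to $U_T$, so $U_T$ is complete.

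The step that makes this work, and the one I expect to be the crux, is showing that $U_T$ is \emph{itself} total even though $T$ is only assumed consistent, not sound: a valid instance $(e,\pi,x)$ could in principle carry a $T$-proof of a \emph{false} $\theta_e$, leaving some $x$ with no witness. I would rule this out with the observation that a consistent $T\in\cal T$ proves only true sentences of the form $\theta_e$. Indeed, if $\theta_e$ were false there would be a \emph{standard} number $x_0$ with $\neg\phi_e(x_0,y)$ for all $y$ in the finite range $|y|\leq q_e(|x_0|)$; the corresponding sentence $\forall y(|y|\leq q_e(|\overline{x_0}|)\to\neg\phi_e(\overline{x_0},y))$ is a true \emph{bounded} sentence and is therefore provable in $S^1_2$, hence in $T$, while instantiating $\theta_e$ at $\overline{x_0}$ gives the contradictory $\exists y(|y|\leq q_e(|\overline{x_0}|)\wedge\phi_e(\overline{x_0},y))$ in $T$, violating consistency. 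Thus a witness exists on every valid instance and $U_T\in{\bf TFNP}$. The only remaining care is a routine encoding point, ensuring the witness bound $q_e(|x|)$ is majorized by a fixed polynomial in $|(e,\pi,x)|$. Combining the two directions gives the equivalence.
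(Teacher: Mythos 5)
Your proof is correct and takes essentially the same route as the paper: the forward direction is the paper's one-liner (take $S^1_2$ plus the axiom that the complete problem is total), and your universal problem $U_T$ is exactly the construction in the paper's Lemma~\ref{l-4.1}, down to the padding remark and the key soundness step that a consistent $T\supseteq S^1_2$ is $\Sigma_1$-complete and therefore proves only true $\Pi_1$ totality statements. The only cosmetic difference is that you index clocked polynomial-time machines where the paper enumerates $\Sigma^b_1$ formulas and then converts the resulting {\bf NP} relation into a polynomial-time one by pairing witnesses.
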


The proof of the equivalence of the versions is easy. To prove that the first version implies the second, suppose the second is false. Let $P$ be a complete
problem in {\bf TFNP}. Then take a fragment of arithmetic and add the
axiom that (a formalization of)  $P$ is total. 

The converse
implication follows immediately from the following fact.

\begin{lemma}\label{l-4.1}
For every $T\in\cal T$, there exists a {\bf TFNP} problem $(p,P)$ such that all {\bf
  TFNP} problems provably total in $T$ are many-one polynomially reducible to $(p,P)$.
\end{lemma}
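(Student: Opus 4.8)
The plan is to build a single universal problem $P_T\in{\bf TFNP}$ whose instances package a candidate $T$-proof together with an input value, and then to reduce every provably total problem to $P_T$ by hard-wiring its own totality proof. First I would normalize the source problems. If $(p,R)\in{\bf TFNP}(T)$ has $\Delta^b_1$ definition $(q,\phi)$, then $\phi\in\Sigma^b_1$ can be rewritten in $\hat\Sigma^b_1$ form $\exists w\,\theta(x,w)$ with $\theta$ sharply bounded (Subsection~\ref{ssec-2.3}), where $w$ codes a witness $y$ together with the sharply bounded data certifying $\phi(x,y)$, and $y$ is recovered from $w$ by a fixed polynomial-time decoding. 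Since this normal-form equivalence is provable in $S^1_2\subseteq T$, the assumed $T$-proof of totality of $(q,\phi)$ converts into a $T$-proof $d_R$ of $\forall v\exists w\,\theta(v,w)$; moreover, as $\theta$ is sharply bounded, $\theta(x,w)$ implies $R(x,y)$ in $\N$ (using that $(q,\phi)$ is a $\Delta^b_1$ definition of $(p,R)$).

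Second, I would define $P_T$ on instances $\langle d,x,1^N\rangle$, where the unary $N$ is a length budget. Write $L$ for the length $|t(|x|)|$ of the value of the bound term $t$ appearing in the conclusion of $d$; note $L$ is computable in time polynomial in the input length, directly from $t$ and $|x|$, without evaluating $t$. The relation $P_T(\langle d,x,1^N\rangle,w)$ holds iff either (a) $d$ is not a valid $T$-proof of a sentence of the prescribed syntactic shape $\forall v\exists w(|w|\le t(|v|)\wedge\theta(v,w))$ with $\theta$ sharply bounded, \emph{or} $N<L$, and in this case $w=0$; or (b) $d$ is a valid $T$-proof of such a sentence, $N\ge L$, and $|w|\le N\wedge\theta(x,w)$. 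Validity of a $T$-proof is decidable in polynomial time (axioms of $T$ are, by $\cal T$, polynomial-time decidable), the shape check is a syntactic test, and $\theta$ is sharply bounded, so the relation is polynomial-time; the solution length is uniformly bounded by $|w|\le N\le|\langle d,x,1^N\rangle|$, giving the required polynomial $p$.

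Third — and this is the crux — I must show $P_T$ is total. Case (a) always has the solution $w=0$. For case (b) I would prove the following soundness-for-free lemma, which is where the assumption that $T$ is merely consistent (not $\Sigma_1$-sound) is handled: if $T$ is consistent, $T\supseteq S^1_2$, and $T\vdash\forall v\exists w\,\theta(v,w)$ with $\theta$ sharply bounded, then for every $n\in\N$ the sentence $\exists w\,\theta(\bar n,w)$ is true. Indeed, instantiation gives $T\vdash\exists w\,\theta(\bar n,w)$; since $\exists w\,\theta(\bar n,w)$ is a bounded sentence with numeral parameters, if it were false then its true negation $\forall w(|w|\le t(|\bar n|)\to\neg\theta(\bar n,w))$ would be provable already in $S^1_2$ (true bounded sentences on numerals are decided by $S^1_2$), hence in $T$, contradicting consistency. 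Because the internal sharp bound of $\theta$ forces $|w|\le L$, the realizing $w$ satisfies $|w|\le L\le N$ whenever case (b) is active, so a solution always exists; thus $P_T\in{\bf TFNP}$.

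Finally, I would exhibit the many-one reduction. Given genuine $(p,R)\in{\bf TFNP}(T)$, fix its $T$-proof $d_R$ from the first step and its bound term $t_R$, and set $L_R(|x|):=|t_R(|x|)|$, a fixed polynomial in $|x|$. Put $f(x):=\langle d_R,x,1^{L_R(|x|)}\rangle$, which is computable in polynomial time since $d_R$ is a constant and $1^{L_R(|x|)}$ has polynomial length, and let $g(x,w)$ decode the witness $y$ from $w$. On input $f(x)$ the proof $d_R$ is valid of the prescribed shape and $N=L_R(|x|)=L$, so case (b) applies; hence any solution $z$ to $P_T(f(x),\cdot)$ satisfies $\theta(x,z)$, and $g(x,z)=y$ satisfies $R(x,y)$ with the right length bound, verifying $P_T(f(x),z)\Rightarrow R(x,g(x,z))$ as required by the definition of many-one reducibility. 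The main obstacle is exactly the totality lemma of the third paragraph: without assuming soundness of $T$ one cannot directly infer that a provably total search problem is genuinely total, and the consistency argument via $S^1_2$-decidability of bounded sentences is what makes the universal problem itself a legitimate member of ${\bf TFNP}$; the secondary technical point, obtaining a single polynomial solution bound despite bound terms of unbounded syntactic degree, is absorbed by the unary budget $1^N$ together with the polynomial-time computation of $L$.
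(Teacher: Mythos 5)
Your construction is correct and is essentially the paper's own proof: the paper likewise packages a totality proof $d$ together with the defining $\Sigma^b_1$ formula, a polynomial bound, and padding of length a syntactic time bound into instances of a single universal problem, derives totality of that problem from consistency plus $\Sigma_1$-completeness of $T\supseteq S^1_2$ (the totality statements being $\Pi_1$, exactly your soundness-for-free lemma), and reduces each provably total problem by hard-wiring its proof via $x\mapsto(x,\phi,q,d,2^{r(|x|)})$ and projecting the witness. The only differences are cosmetic --- you normalize each problem to certificate form $\exists w\,\theta(x,w)$ up front where the paper instead pairs the solution with the nondeterministic verification data inside the universal problem's witness --- plus one notational slip you should fix: with the shape written as $|w|\le t(|v|)$, your threshold $L=|t(|x|)|$ conflates the value of the bound with its length (the budget must satisfy $N\ge$ the actual length bound on $w$), although your later appeal to the internal \emph{sharp} bound of $\theta$, which keeps $|w|\le L$ polynomial and poly-time computable, shows the intended and correct reading.
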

% See \cite{logical} for more details.
\begin{proof}
The proof is based on the fact that one can effectively enumerate all
problems in ${\bf TFNP^*}(T)$. (Such proofs are routine and we
include a proof here only because it demonstrates a method that can be
applied in other similar situations, in particular, we will use it in
Proposition~\ref{pr-6.2}).
The basic idea is to connect all provably total problems into one. We
can recognize a definition of a provably total problem by finding a
proof of the totality for this definition. A minor complication is
that different provably total problems may require different
polynomials as bounds on the witnesses and bounds in the $\Delta^b_1$
formulas defining them. This can easily be solved by suitable padding.

Now we present the argument in more detail. Recall that from the point of view of provability in a theory, it does not matter if we use $\Delta^b_1$ formulas or, more generally, $\Sigma^b_1$ in the definition of the problems. So, for the sake of simplicity, we will enumerate $\Sigma^b_1$ formulas. 

Given a $\Sigma^b_1$
formula $\psi(x)$, we say that $r(n)$ is a
syntactic nondeterministic time bound for $\psi$ if the bounds at quantifiers in the formula ensure that $\psi(x)$ is decidable by a nondeterministic Turing machine in
time $r(n)$ where $n$ is the length of $x$. Since $\psi$ is a
$\Sigma^b_1$ formula, there always exists a polynomial $r$ that is such a bound
for $\psi$.

Let $T\in\cal T$ be given. We define a binary relation $R(u,v)$ by
the following condition:
\bi
\item if $u=(x',\phi,q,d,a)$ is a quintuple such that $\phi$ is a $\Sigma^b_1$ formula, $q$
  is a polynomial, $d$ is
  $|T|$-proof of $\forall x\exists y(|y|\leq q(|x|)\wedge\phi(x,y))$
  and $|a|=r(|x'|)$, where $r$ is a syntactic nondeterministic time bound for
  $\exists y(|y|\leq q(|x|)\wedge\phi(x,y))$, then $\phi(x',v)$.
\ei

Note that $\Phi:=\forall x\exists y(|y|\leq q(|x|)\wedge\phi(x,y))$ is a $\Pi_1$ sentence and $T$ is consistent and $\Sigma_1$-complete (since it contains $S^1_2$). Hence if $T$ proves $\Phi$, then $\Phi$ is true in $\N$.

The relation $R$ is computable in nondeterministic polynomial time, because the
condition on  $(x',\phi,q,d,a)$ is a simple syntactical condition and
if the condition is satisfied,  $\phi(x',v)$ can be computed in
nondeterministic polynomial time bounded by $|a|$. Further, for every
$u$ there exists some $v$, $|v|\leq|u|$, such that $R(u,v)$ holds true,
because if the condition on  $(x',\phi,q,d,a)$ is satisfied, then for
every $x'$ there exists $v$, $|v|\leq|a|$, that satisfies
$\phi(x',v)$, and if the condition is not satisfied, then one can take
$v=0$.

The fact that we only know that $R$ is computable in
\emph{nondeterministic} polynomial time is not a problem. Clearly,
there exists a ternary relation $P'$ computable in polynomial time and
a polynomial $p'$
such that 
\[
R(u,v)\equiv \exists w(|w|\leq p(|u|,|v|)\wedge P'(u,v,w)).
\]
So we define 
\[
P(u,y):\equiv \exists v,w(y=(v,w)\wedge P'(u,v,w))
\quad\mbox{ and }\quad
p(n)=p'(n,n).
\]

Let a  {\bf TFNP} problem $(q,Q)$ be given and suppose that it
is provably total in $T$. We have a 
$\Sigma^b_1$ formula $\phi$ and a polynomial $q$ that defines the
problem and a $T$-proof of totality $d$ for this
representation. Also we have a nondeterministic polynomial time bound $r$
for $\exists y(|y|\leq q(|x|)\wedge\phi(x,y))$. We define a reduction
of $(q,Q)$ to $(p,P)$ by
\[
x\mapsto f(x):=(x,\phi,q,d,2^{r(|x|)}).
\]
Given a witness $(v,w)$ for $P(f(x),(v,w))$ we get a witness for
$Q(x,v)$ simply by taking the first element from the pair $(v,w)$.
\end{proof}

We are indebted to to Emil Je\v{r}\'abek for the following proposition.

\bpr[E. Je\v{r}\'abek, unpublished]
There exists a complete problem in {\bf TFNP} w.r.t. polynomial reductions if and only if there exists a complete problem in {\bf TFNP} w.r.t. \emph{many-one} polynomial reductions.
\epr

The proposition is an immediate corollary of the following lemma.

\bl
For every {\bf TFNP} problem $P$, there exists a {\bf TFNP} problem $P'$ such that for every  {\bf TFNP} problem $Q$, if $Q$ is polynomially reducible to $P$, then $Q$ is many-one polynomially reducible to $P'$.
\el

\bprf
Let $P$ be given by a polynomial $p$ and a binary relation $R$. 
%(w.l.o.g. we will assume that the polynomial bound is implicit in $R$). 
We define a binary relation $R'(u,v)$ as follows. Interpret a string $u$ as an encoding of a string $x$ and an oracle Boolean circuit $C$. We will only allow oracle circuits that have $p(n)$ input bits for a possible oracle answer for each query of length $n$. 
Then $R'((x,C),v)$ will be defined to be true if $v$ encodes a computation of $C$ on input $x$ with the oracle queries and answers to be pairs $r,s$ such that $R(r,s)$ holds true; in other words, $v$ encodes a computation of $C$ that uses $P$ as an oracle. Furthermore, $R'(u,v)$ is defined to be true, if $u$ does not have the form described above. Clearly, $R'$ defines a total  problem: given $(x,C)$, we can run $C$ on input $x$ using $P$ as an oracle.

Suppose $Q$ is reducible to $P$ using a polynomial time query machine $M$. For each input $x$ for the problem $Q$, we can construct in polynomial time an oracle Boolean circuit $C$ that simulates computations of $M$ on $x$. Given a string $v$ such that $R'((x,C),v)$, we get an output string $y$ of the computation of $M$ that satisfies $Q(x,y)$, because $M$ is a polynomial reduction of $Q$ to $R$. So the reduction is given by the polynomial time functions $x\mapsto (x,C)$ and $v\mapsto y$, where $y$ is the output of the computation encoded by $v$.
\eprf

Furthermore, Je\v{r}\'abek noted that we also get an equivalent conjecture if we use the following modification. Let us denote by $\overline{\bf TFNP}$ the class of search problems defined in the same way as {\bf TFNP} except that the binary relations are only required to be in {\bf NP}.%
\footnote{It would be more logical to use {\bf TFP} for what is
  called {\bf TFNP} and reserve {\bf TFNP} for $\overline{\bf TFNP}$.}
Many-one polynomial reductions for $\overline{\bf TFNP}$ are defined exactly in the same way as for {\bf TFNP}.

\bpr
There exists a complete problem in {\bf TFNP} if and only if there exists a complete problem in $\overline{\bf TFNP}$.
\epr

\prfh
(1) Every problem $P$ in  {\bf TFNP} is, by definition, also in $\overline{\bf TFNP}$. 
(2) Let $Q\in\overline{\bf TFNP}$. Let $Q$ be given by a binary relation  $\exists^p z.R(x,y,z)$. Then the binary relation $R'$ defined by 
\[
R'(x,(y_1,y_2)):= R(x,y_1,y_2)
\]
defines a problem in {\bf TFNP}. 
%Using these two observations as a hint, it is very easy to finish the proof. We leave it to the reader.
\qed

\subsection{Some arguments supporting the conjecture}

It is always difficult to justify a mathematical conjecture. Either the sentence is true, or it is false, but unlike in physics, in mathematics there are no experiments that may support one or the other. Thus the belief in a conjecture is based on subjective feelings. 
Here are our reasons why we believe that the conjecture should be true.

\ben
\item Every {\bf TFNP} problem is based on some mathematical principle
  that ensures that for every input there exists a solution. Although
  these principles are simple for the basic classes of {\bf TFNP}
  problems, it seems likely that there is no universal mathematical
  principle that would work for every {\bf TFNP} problem.
\item Combinatorial characterizations of provably total polynomial search problems have been obtained for some fragments of Bounded Arithmetic. 
 The  description of these combinatorial problems suggests that their strength
  increases with increasing strength of the theories.%
\footnote{We only hypothesize that the strength of fragments $S^i_2$ of Bounded Arithmetic increases with increasing~$i$, but this hypothesis is supported by a connection with the Polynomial Hierarchy in computational complexity~\cite{kpt}.}
\item  An oracle has been
  constructed relative to which the conjecture holds true~\cite{herbr}. 
\item The connection with search problems verifying the consistency
  of a theory that we describe below can also be viewed as a
  supporting argument.
\een

\subsection{Herbrand Consistency Search}

Conjecture~{\sf TFNP} has another equivalent form in which the concept
of consistency plays a key role. The well-known Herbrand theorem provides a ``combinatorial'' characterization of provability in predicate calculus (see, e.g., \cite{buss-handbook}). In particular one can characterize the consistency of theories. Let us consider logic without equality and the special case of universal sentences.
According to Herbrand's theorem a
universal sentence $\Phi:=\forall x_1\dots\forall x_k.\phi(x_1\dts x_k)$, where $\phi$ is quantifier free, is
consistent if and only if for every family of terms $\tau_{ij}$,
$i=1\dts n$, $j=1\dts k,$
\bel{e-herbrand}
\bigwedge_{i=1}^n\phi(\tau_{i1}\dts \tau_{ik})
\ee
is satisfiable as a propositional formula. Sentences expressing consistency using standard proofs (in Hilbert-style, or Gentzen calculi) and sentences expressing consistency using Herbrand's theorem are equivalent provably in every theory that proves Herbrand's theorem. However, the corresponding restricted finite versions of consistency statements are essentially different because the transformation of standard proofs into sets of terms that witness provability in Herbrand's theorem is nonelementary. But here we are interested in a different aspect of Herbrand's theorem: the complexity of finding a satisfying assignment for~(\ref{e-herbrand}). Since the formula, as a proposition, is always satisfiable when $\Phi$ is consistent, every consistent
universal sentence defines a natural {\bf TFNP} problem.

\begin{definition}
Let $\Phi:=\forall x_1\dts x_k.\phi(x_1\dts x_k)$ be a consistent universal sentence.  Then $HCS(\Phi)$, the \emph{Herbrand Consistency Search for $\Phi$}, is the following total polynomial search problem. Given terms $\tau_{ij}$ in the language of $\Phi$,  $i=1\dts n$, $j=1\dts k$, find a truth assignment to the atomic  subformulas occurring in $\phi(\tau_{i1}\dts\tau_{ik})$, for $i=1\dts n$, that makes $\bigwedge_{i=1}^n \phi(\tau_{i1}\dts\tau_{ik})$ true.
\end{definition}

For simplicity, we define Herbrand consistency search only for
universal sentences in this paper, but  using Skolemization, one can easily extend this
definition to conjunctions of prenex formulas.  In
\cite{herbr} we proved the following theorem.

\begin{theorem}\label{t-hcon}
  For every total polynomial search problem $P$, there exists a
  consistent universal sentence $\Phi$ such that the problem $P$ is
  many-one polynomially reducible to $HCS(\Phi)$.
\end{theorem}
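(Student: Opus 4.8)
The plan is to realize an arbitrary total polynomial search problem as the task of finding a satisfying assignment of a Cook--Levin style propositional encoding of a halting computation, and then to exhibit that encoding uniformly as a Herbrand instantiation of a single \emph{consistent} universal sentence. Fix a deterministic polynomial-time Turing machine $D$ deciding the relation $R$ of $P=(p,R)$, and recall that by totality every $x$ carries a witness $y$ with $|y|\le p(|x|)$ and $R(x,y)$. I would work in a language (without equality) having two ground constants $\mathbf 0,\mathbf 1$ for bit values, pairing/list functions to form numerals and to index the cells of a computation tableau of $D$, a single monadic predicate $T$ whose atoms carry the truth values that an $HCS$-solution assigns, and---the crucial ingredient---a function symbol $W$ intended to denote a choice of witness, $W(x)=$ some $y$ with $R(x,y)$.

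The sentence $\Phi$ would be the prenex conjunction of universally quantified axioms of three kinds. First, the ground facts $T(\mathbf 1)$ and $\neg T(\mathbf 0)$ together with recursion axioms $\forall h,t\,(T(\mathsf{bit}(\mathbf 0,\mathsf{cons}(h,t)))\leftrightarrow T(h))$ and the successor analogue, so that the $i$-th bit of any \emph{numeral} is pinned down by ground data and needs no separate forcing. Second, the local window-consistency axioms of the Cook--Levin tableau for $D$, written uniformly from the transition table of $D$ and quantified over cell indices; these only propagate a configuration and are satisfiable by any genuine run. Third, a single \emph{acceptance axiom} $\forall x\,\big(\text{the tableau of }D\text{ on }(x,W(x))\text{ halts accepting}\big)$, where the witness bits feeding row $0$ are exactly the atoms $T(\mathsf{bit}(i,W(x)))$. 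Consistency of $\Phi$ is then immediate: interpret $W$ by an actual witness-selector (possible by totality of $P$), interpret $T$ as the true Boolean value of each encoded bit or gate, and observe that all three groups of axioms hold in this genuine structure; a universal sentence true in some structure is consistent, so $HCS(\Phi)$ is a well-defined total search problem.

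For the many-one reduction I would let $f(x)$ be the poly-time map that instantiates the universal axioms at the numeral $\overline x$ and at all cell indices of the concrete grid of size $q'(|x|)$ (the time bound of $D$), producing one $HCS(\Phi)$ instance; and I would let $g(x,z)$ read off the witness bits $T(\mathsf{bit}(i,W(\overline x)))$, $i<p(|x|)$, from the solution $z$. The verification is that \emph{every} satisfying assignment yields a genuine witness: the bit-recursion axioms force row $0$ to hold the true bits of $x$ together with the (free) witness bits; since $D$ is deterministic and its input row is pinned, the window axioms force the entire tableau uniquely; and since the instance contains the instantiated acceptance axiom, the forced halting state is accepting, i.e.\ $R(x,g(x,z))$ holds. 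Satisfiability of the instance (needed for totality of $HCS(\Phi)$, and guaranteed by consistency of $\Phi$) corresponds precisely to the existence of a witness. This gives $HCS(\Phi)(f(x),z)\Rightarrow P(x,g(x,z))$ for all $x,z$, as required by the definition of many-one reducibility.

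The step I expect to be the main obstacle is \textbf{forcing ``the computation accepts'' while keeping $\Phi$ consistent}. One cannot simply add a unit clause asserting that the output gate is true: as a universal template $\forall\,\dots\,T(\cdot)$ such a clause over-forces and destroys consistency, and the target atom is not ground because it depends on $x$. The whole design turns on the observation that routing the witness bits through the function symbol $W$ converts ``the verifier accepts'' into a genuinely \emph{universal} statement whose consistency is underwritten by the totality of $P$, yet whose instantiation at a fixed numeral compels the free witness bits to encode a real solution. The remaining work is the (routine but tedious) engineering of an equality-free, uniform tableau encoding---getting the bit-indexing, the window predicates, and the $x$-dependent grid size into a single universal template---together with the polynomial-time bookkeeping showing that $f$ and $g$ are computable in polynomial time.
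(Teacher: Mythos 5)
Your construction is essentially the proof of this theorem given in \cite{herbr} (the present paper only cites it): one Skolemizes the totality statement via the witness function $W$, encodes the polynomial-time verifier by open bit-recursion and tableau axioms true in an expansion of $\N$ --- whence consistency --- and reads the witness bits off any satisfying assignment of the axioms instantiated at the numeral $\bar{x}$, determinism of the verifier guaranteeing that every solution describes the genuine accepting run. The one point you defer as engineering, expressing ``halts accepting'' inside a fixed universal matrix despite the $x$-dependent running time, is indeed routine: clock $D$, add persistent halted and accepting flags to configurations, state acceptance as the universal implication $\forall x\,\forall t\,(\mbox{halted at }t\to\mbox{accepting at }t)$, and let the reduction instantiate $t$ at the actual halting time, where the tableau instances force the halted flag.
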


Using this theorem we can state Conjecture~{\sf TFNP} in the following
equivalent form.

\begin{conjecture}{equivalent to {\sf TFNP}}
For every theory $T\in\cal T$ there exists a consistent universal
sentence $\Phi$ such that $HCS(\Phi)$ is not polynomially reducible to
any {\bf TFNP} problem provably total in $T$, i.e.,
$HCS(\Phi)\not\in{\bf TFNP^*}(T)$.
\end{conjecture}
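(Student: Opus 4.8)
The plan is to establish the equivalence by transferring the equivalence between Conjecture~{\sf TFNP} and the non-existence of a complete problem in {\bf TFNP} (already proved in the excerpt) across the many-one reduction supplied by Theorem~\ref{t-hcon}. The statement to be proved is that Conjecture~{\sf TFNP} holds if and only if, for every $T\in\cal T$, there exists a consistent universal sentence $\Phi$ with $HCS(\Phi)\not\in{\bf TFNP^*}(T)$. Since $HCS(\Phi)$ is itself a particular {\bf TFNP} problem, one direction is immediate: if for every $T$ there is a consistent universal $\Phi$ with $HCS(\Phi)\not\in{\bf TFNP^*}(T)$, then in particular there is a {\bf TFNP} problem (namely $HCS(\Phi)$) not reducible to any problem provably total in $T$, which is exactly Conjecture~{\sf TFNP}. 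So the content is entirely in the forward direction.

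For the forward direction, I would assume Conjecture~{\sf TFNP} and fix an arbitrary $T\in\cal T$. By Conjecture~{\sf TFNP} there is a {\bf TFNP} problem $P$ with $P\not\in{\bf TFNP^*}(T)$, i.e.\ $P$ is not polynomially reducible to any {\bf TFNP} problem provably total in $T$. Now apply Theorem~\ref{t-hcon} to $P$: it produces a consistent universal sentence $\Phi$ such that $P$ is many-one polynomially reducible to $HCS(\Phi)$. The key step is then to observe that ${\bf TFNP^*}(T)$ is closed under polynomial reductions — this is built into its definition, since ${\bf TFNP^*}(T)$ is precisely the downward closure under polynomial reducibility of ${\bf TFNP}(T)$. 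Consequently, if we had $HCS(\Phi)\in{\bf TFNP^*}(T)$, then by transitivity of polynomial reducibility (a many-one reduction is in particular a polynomial reduction) we would get $P\in{\bf TFNP^*}(T)$, contradicting the choice of $P$. Hence $HCS(\Phi)\not\in{\bf TFNP^*}(T)$, which is what we wanted.

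The main obstacle, and the reason this equivalence is not completely trivial, is the bookkeeping around the two different notions of reduction and the asymmetry in the definition of ${\bf TFNP^*}(T)$ versus ${\bf TFNP}(T)$. Theorem~\ref{t-hcon} gives only a \emph{many-one} reduction of $P$ to $HCS(\Phi)$, whereas Conjecture~{\sf TFNP} is phrased in terms of general (Turing-style, oracle) polynomial reductions; one must verify that a many-one reduction composes correctly with a general polynomial reduction to yield a general polynomial reduction, so that membership in the reduction-closed class ${\bf TFNP^*}(T)$ genuinely propagates from $HCS(\Phi)$ back to $P$. This is routine because many-one reducibility is a special case of polynomial reducibility and the latter is transitive, but it is exactly the point where the strong form ${\bf TFNP^*}(T)\neq{\bf TFNP}$ (rather than the weaker ${\bf TFNP}(T)\neq{\bf TFNP}$) is essential: the argument relies on ${\bf TFNP^*}(T)$ being closed under polynomial reductions, which ${\bf TFNP}(T)$ need not be, as the excerpt explicitly warns. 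Once this closure and transitivity are in hand, both implications follow in a few lines, and no further computation is required.
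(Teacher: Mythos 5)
Your proof is correct and follows exactly the route the paper intends: the backward direction is immediate because $HCS(\Phi)$ is itself a {\bf TFNP} problem, and the forward direction composes the many-one reduction from Theorem~\ref{t-hcon} with the closure of ${\bf TFNP^*}(T)$ under polynomial reductions. Your explicit attention to the many-one versus Turing reduction bookkeeping, and to why the ${\bf TFNP^*}(T)$ (rather than ${\bf TFNP}(T)$) formulation is needed, correctly fills in the details the paper leaves implicit when it states the equivalence as a direct consequence of that theorem.
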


%% As with Conjecture~{\sf CON$^N$}, one can ask how much stronger one theory must be than another in order to be able to prove the totality of more polynomial
%% search problems. But we can also ask: what is a search problem whose
%% totality is not provable in a given theory? The following could be an answer to
%% both questions.

This form of the {\sf TFNP} conjecture suggests a natural question: what is a sentence $\Phi$ that is likely not in ${\bf TFNP^*}(T)$? The following could be an answer to this question.

\begin{conjecture}{\sf TFNP$^+$}
Suppose $T\in\cal T$ is axiomatized by a universal 
sentence. Then $T$ does not prove that $HCS(T)$ is total for any
formalization of it by a $\Delta^b_1$ formula. 
% In symbols, $HCS(T)\not\in{\bf TFNP^*}(T)$. % not equivalent!!!
\end{conjecture}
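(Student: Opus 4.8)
The plan is to prove the contrapositive by a Gödel-style second-incompleteness argument. Write $\Phi=\forall x_1\dts x_k.\phi(x_1\dts x_k)$ for the universal axiom of $T$. The first, essentially bookkeeping, step is to pin down what \emph{``$HCS(T)$ is total''} asserts: for the natural $\Delta^b_1$ formalization $\chi$, the totality sentence $\forall u\exists v(|v|\leq q(|u|)\wedge\chi(u,v))$ says precisely that every finite Herbrand expansion $\bigwedge_{i=1}^n\phi(\tau_{i1}\dts\tau_{ik})$ of $\Phi$ is propositionally satisfiable. This is exactly the \emph{Herbrand consistency} of $T$, which I will write $HCon_T$. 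So for the canonical definition the goal reduces to $T\not\vdash HCon_T$.

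To bring Gödel's theorem to bear I would argue, reasoning inside $T$, that $HCon_T\to Con_T$. By the nontrivial direction of Herbrand's theorem a $T$-refutation of $\bot$ can be converted into a propositionally unsatisfiable Herbrand expansion; formalizing this yields $\neg Con_T\to\neg HCon_T$, that is, $HCon_T\to Con_T$. If $T$ proves this implication, then $T\vdash HCon_T$ would give $T\vdash Con_T$, which is impossible for a consistent $T$ with $S^1_2\subseteq T$ by the Second Incompleteness Theorem. This disposes of the canonical formalization for every $T$ strong enough to carry out the Herbrand analysis.

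The remaining difficulty, and the reason the statement quantifies over \emph{all} $\Delta^b_1$ formalizations, is that this is a semantic rather than a syntactic assertion. One cannot simply reduce an arbitrary formalization to the canonical one, because, as the Remark in Section~\ref{sec4} shows, two $\Delta^b_1$ definitions of the same polynomial-time relation need not be $S^1_2$-provably equivalent. A complete proof would have to show that no choice of defining formula can make totality provable, so that unprovability is forced by a property of the problem itself and not by an artifact of its presentation; this is the same ``semantic'' phenomenon that underlies the stronger form ${\bf TFNP^*}(T)\neq{\bf TFNP}$.

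The main obstacle is the clash between the two preceding steps and the possible \emph{weakness} of $T$. Converting a refutation into an unsatisfiable Herbrand expansion proceeds through cut elimination, whose non-elementary blow-up---noted already before Theorem~\ref{t-hcon}---means that proving $HCon_T\to Con_T$ forces $T$ to prove the totality of a superexponential function, which a universal theory extending only $S^1_2$ need not be able to do. For such weak theories the reduction to ordinary consistency breaks down, and whether a weak universal $T$ proves its own Herbrand consistency is precisely the delicate point to which the known unprovability-of-Herbrand-consistency results are sensitive. Thus the self-reference argument settles only the strong case; establishing the statement for \emph{every} universal $T\in\cal T$, together with the semantic quantifier over all formalizations, appears to lie beyond current methods, which is why it can only be offered as a conjecture, on a par with the complexity separations that the whole family of conjectures entails.
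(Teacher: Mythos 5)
The statement you were asked to compare against is an open conjecture in the paper, not a theorem: the paper gives no proof of {\sf TFNP$^+$} and even remarks that the author is ``not very confident that it is true,'' so there is no proof to match yours against, and you were right not to manufacture one. Your partial analysis coincides exactly with the paper's own commentary following the conjecture: the G\"odel-II argument via Herbrand consistency (formalizing $HCon_T\to Con_T$ and invoking $T\not\vdash Con_T$) rules out the \emph{natural} formalization whenever $T$ is strong enough to prove Herbrand's theorem, while the two obstacles you identify --- contrived $\Delta^b_1$ definitions that need not be $S^1_2$-provably equivalent to the canonical one, and weak universal theories for which the non-elementary cut-elimination blow-up blocks the derivation of $HCon_T\to Con_T$ --- are precisely the reasons the paper can only state this as a conjecture.
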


Note that if $T$ is strong enough to prove Herbrand's theorem, then it
does not prove the totality of $HCS(T)$ formalized in a natural way,
because if it did, it would prove its own consistency. However, this
does not exclude the possibility that it proves the totality for some
contrived definition. Although we call it a conjecture, we are not
very confident that it is true. But suppose it were true and suppose
that $S\in\cal T$ is axiomatized by a universal formula and $T$ is a theory that proves Herbrand's Theorem and the consistency of $S$. Then we would have $HCS(S)\in{\bf TFNP^*}(T)\setminus{\bf
  TFNP^*}(S)$. Thus according to this conjecture, adding the
consistency of a theory to itself produces more provably total
polynomial search problems (at least for theories axiomatized by a universal formula).

\section{Propositional proof systems, disjoint {\bf NP}-pairs and
  disjoint {\bf coNP}-pairs}\label{sec5}

So far we were concerned with first order theories. In this section we
will show that one can also use other formal systems, namely,
propositional proof systems, in order to state and study conjectures
about incompleteness in the finite domain.

Let a language for classical propositional logic be fixed; say, we
take connectives $\neg,\wedge,\vee$ and variables $p_1,p_2,\dots$. Let
{\it TAUT} be the set of all tautologies and {\it SAT} be the set of
all satisfiable propositions.
Following~\cite{cook-reckhow}, we say that \emph{a proof system is a
  polynomial time computable function $P$ from $\Sigma^*$ onto {\bf
    TAUT}}.%
\footnote{Recall that in this paper  
$\Sigma$ denotes $\{0,1\}$, but in this definition it could be any finite alphabet of size at least~2.} 
If $P(w)=\phi$, we say that $w$ is a proof of $\phi$ in
the proof system $P$. This elegant definition captures three basic
properties of proof systems: \ben
\item the relation \emph{``$w$ is a proof of $\phi$''} is decidable in
  polynomial time;
\item the system is sound;
\item the system is complete.
\een
In the rest of this section the term ``proof system'' will
always refer to ``\emph{propositional} proof system''.

According to this definition, a proof can be any evidence that shows
logical validity of a proposition. %% For example, we may take a first
%% order theory $T\in\cal T$, fix a formula $\tau(x)$ that represents
%% {\it TAUT} in $T$, and then proclaim any $T$-proof of $\tau(\bar\phi)$ to be
%% a proof of $\phi$. % repreated below
The standard formalizations of propositional
calculus based on axioms and logical rules are systems from a special
class of proof systems, called \emph{Frege systems.}

We say that \emph{a proof system $P$ is polynomially bounded} if there
exists a polynomial $p$ such that every tautology $\phi$ has a
$P$-proof of length at most $p(|\phi|)$. Since {\it TAUT} is {\bf
  coNP}-complete, the existence of a polynomially bounded proof system
is equivalent to {\bf NP}={\bf coNP}.

A weaker concept is length optimality. We say that \emph{a proof
  system $P$ is length-optimal} if for every proof
system $Q$, there exists a polynomial $p$ such that if $\phi$ has a
proof of length $n$ in $P$, then it has a proof of length at most
$p(n)$ in $Q$. (Length-optimality is a nonuniform version of p-optimality that will be defined in Section~\ref{sec6}.)  
In~\cite{KP} we showed that Conjecture~{\sf CON}$^N$ is
equivalent to the following one.

\begin{conjecture}{equivalent to {\sf CON}$^N$}
There exists no length-optimal proof system.
\end{conjecture}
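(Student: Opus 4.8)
The plan is to prove the contrapositive form of the claimed equivalence: the negation of Conjecture~{\sf CON}$^N$ holds if and only if a length-optimal proof system exists. Recall that the negation of {\sf CON}$^N$ asserts the existence of a single \emph{master} theory $S\in\cal T$ that proves every finite consistency statement $Con_T(\bar n)$ by proofs of length polynomial in $n$; and recall that the intended meaning of a \emph{length-optimal} system $P^*$ is that $P^*$ length-simulates every proof system $Q$, i.e.\ every $\phi$ with a $Q$-proof of length $n$ has a $P^*$-proof of length $p(n)$ for a polynomial $p$ depending only on $Q$. The whole argument rests on a dictionary between theories and proof systems: to a theory $U\in\cal T$ I associate the system $P_U$ whose proofs of a tautology $\tau$ are pairs $(\tau,d)$ with $d$ a $U$-proof of $\lceil Taut(\bar\tau)\rceil$, where $Taut(x)$ is the $\Pi^b_1$ formula ``$x$ codes a tautology''; and to a proof system $Q$ I associate $T_Q:=S^1_2+RFN_Q$, where $RFN_Q$ is the soundness sentence $\forall w,\tau(Prf_Q(w,\tau)\to Taut(\tau))$. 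Since a genuine proof system $Q$ is sound, $T_Q$ is a true, hence consistent and $\Pi_1$-sound, theory in $\cal T$, so $P_{T_Q}$ is a legitimate proof system; this $\Pi_1$-soundness is exactly what makes $P_U$ sound whenever $U$ is true.

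For the direction ``master theory $\Rightarrow$ optimal system'', let $S$ be a master theory and set $P^*:=P_S$. To show $P^*$ length-simulates an arbitrary $Q$, take a $Q$-proof $w$ of $\tau$ with $|w|=n$. By Fact~\ref{f-1}, $S^1_2$ proves $Prf_Q(\bar w,\bar\tau)$ in length polynomial in $n$, so one application of $RFN_Q$ gives a $T_Q$-proof $d$ of $Taut(\bar\tau)$ of length $m=\mathrm{poly}(n)$. The key step is to convert this short $T_Q$-provability into a short $S$-proof of $Taut(\bar\tau)$ — that is, a short $P^*$-proof of $\tau$ — using only the \emph{finite consistency} of $T_Q$, which $S$ proves cheaply. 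Arguing in $S$: it proves $Con_{T_Q}(\overline{\mathrm{poly}(m)})$ by a proof of length $\mathrm{poly}(n)$; were $\tau$ not a tautology, its falsifying assignment would have length $\le|\tau|$, so $\neg Taut(\bar\tau)$ would be a true $\Sigma_1$ sentence which, by the $\Sigma_1$-completeness quantified in Fact~\ref{f-1}, $T_Q$ proves by a short proof, and this together with $d$ yields a short $T_Q$-refutation, contradicting $Con_{T_Q}$. Hence $S$ proves $Taut(\bar\tau)$, and tracking the bounds gives a $P^*$-proof of $\tau$ of length polynomial in $n$.

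For the converse, suppose $P^*$ is length-optimal and put $S:=T_{P^*}=S^1_2+RFN_{P^*}$, again a true theory in $\cal T$. Given any $T\in\cal T$, I must show $S$ proves $Con_T(\bar n)$ in length $\mathrm{poly}(n)$. First note that $U:=S^1_2+Con_T$ is consistent (as $T$ is) and proves each $Con_T(\bar n)$ in \emph{logarithmic} length, since $Con_T(\bar n)$ is a substitution instance of the single axiom $Con_T$; hence $P_U$ has short proofs of the tautology coding $Con_T(\bar n)$. Since $P^*$ length-simulates $P_U$, these tautologies have $P^*$-proofs of length $\mathrm{poly}(n)$. Finally I reflect in $S$: from $RFN_{P^*}$, $S$ proves that whatever has a short $P^*$-proof is a tautology, and a short standard computation shows $S$ proves ``that tautology is true $\to Con_T(\bar n)$''; combining, $S$ proves $Con_T(\bar n)$ in length $\mathrm{poly}(n)$, which is the negation of {\sf CON}$^N$.

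The step I expect to be the main obstacle — and the heart of the theorem — is the passage from short provability in the auxiliary theory to a short object in the fixed system $P^*$, and its mirror image in the converse. In the forward direction the delicate point is that finite \emph{consistency} alone, rather than a full reflection principle, already suffices; this works only because $Taut(\bar\tau)$ is $\Pi_1$ with a short falsifying witness, so its negation is reached by $\Sigma_1$-completeness with the polynomial bounds of Fact~\ref{f-1}, and controlling these bounds uniformly (the bound $m=\mathrm{poly}(n)$ and the consistency cutoff $\mathrm{poly}(m)$) is where the bookkeeping must be done carefully. In the converse the analogous care is needed to make the propositional translation of $Con_T(\bar n)$ and its reversal provable in $S$ by proofs polynomial in $n$ rather than in $2^{|n|}$, for which the binary-numeral conventions and the efficiency of the translation are essential.
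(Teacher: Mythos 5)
Your proof is correct and follows essentially the same route as the paper's source for this equivalence (Kraj\'{\i}\v{c}ek--Pudl\'ak \cite{KP}): the dictionary $U\mapsto P_U$ via $S$-proofs of $Taut(\bar\tau)$ and $Q\mapsto S^1_2+RFN_Q$, with finite consistency plus formalized $\Sigma^b_1$-completeness replacing reflection in the forward direction, and the propositional translation of $Con_T(\bar n)$ with $RFN_{P^*}$ in the converse. One small repair: the master theory $S$ is only assumed consistent, not true, so the soundness of $P_S$ should be justified not by $\Pi_1$-soundness but by the fact that $\neg Taut(\bar\tau)$ for any standard non-tautology $\tau$ is a true $\Sigma^b_1$ sentence with a standard witness, hence provable in $S^1_2$, so a consistent extension of $S^1_2$ can never prove $Taut(\bar\tau)$ for such $\tau$.
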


Why do we believe that this conjecture is true? An argument that we can give is based on a construction of proof systems used to prove that the two statements of Conjecture~{\sf CON}$^N$ are equivalent. Given an arithmetical theory $T$, we can formalize the concept of a propositional tautology by some formula $\tau(x)$. For a given tautology $t$, we take its G\"odel number $n$ and treat any first order $T$-proof of $\tau(\bar{n})$ as a proof in a propositional proof system. Then it seem plausible that in stronger theories we can prove some tautologies by shorter proofs. Moreover, one can show that these proof systems are in a sense universal. So the fact that the logical strength of theories cannot be bounded is likely to be projected into these proof systems.

Another argument supporting the conjecture is from our experience with specific proof systems studied in proof complexity. Most systems are based on some class of formulas and deduction rules. If we enlarge the class of formulas then, usually, the system becomes stronger. For example, if we use quantified Boolean formulas instead of ordinary Boolean formulas, the system seems much stronger. For some weak systems, in particular, bounded depth Frege systems, this has actually been proven~\cite{Impagliazzo-Krajicek}. As, apparently, there is no limit on how strong expressive power formulas can have, we also believe that there is no limit on how efficient a proof system can be.

%% {\bf Why do we believe? Every theory gives a proof system, stronger theories give stronger. This conjecture is a complexity theoretical version of incompleteness - first order proofs are replaced by propositional proofs, provability is replaced by provability by polynomial size proofs. Connections with bounded arithmetic show that it is a nonuniform version of unprovability.}

\subsection{Disjoint {\bf NP} pairs}

In \cite{Razborov94} Razborov defined the \emph{canonical
  pair of a proof system} $P$ to be the pair of sets $(PR(P),NSAT^*)$
where $PR(P)=\{(\phi,2^m);\ \phi\mbox{ has a $P$-proof of length at
  most }m\}$, and  $NSAT^*=\{(\phi,2^m);\ \neg\phi\mbox{ is satisfiable
}\}$. Note that it is a pair of two disjoint {\bf NP} sets. If a proof
system $P$ simulates a proof system $Q$, then  $(PR(Q),NSAT^*)$ is
polynomially reducible to  $(PR(P),NSAT^*)$ in the following sense.

We say that \emph{a disjoint {\bf NP} pair $(A,B)$ is polynomially reducible
to  a disjoint {\bf NP} pair $(C,D)$} if there exists a polynomial
time computable function $f$ that maps $A$ into $C$ and $B$ into $D$. We say that pairs $(A,B)$ and $(C,D)$ are \emph{polynomially equivalent} if there are reductions between them in both directions.

It is not difficult to show that canonical pairs of proof systems are
universal in the class of all disjoint {\bf NP} pairs, which means
that every disjoint {\bf NP} pair $(A,B)$ is polynomially reducible to
the canonical pair of some proof system $P$. In fact, even more is true.

\begin{proposition}[\cite{GSZ}]
For every disjoint {\bf NP} pair $(A,B)$, there exists a proof system whose
canonical pair is polynomially \emph{equivalent} to $(A,B)$.
\end{proposition}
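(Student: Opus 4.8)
The plan is to establish reductions in both directions, where the key freedom is that we get to \emph{choose} the proof system $P$; the direction ``$(A,B)$ reduces to a canonical pair'' is essentially the universality statement recalled just above, but to get equivalence we must build $P$ out of $(A,B)$ itself. Throughout I assume $A$ and $B$ are nonempty and fix witnesses $a_0\in A$, $b_0\in B$. (The trivial cases such as $B=\emptyset$ are genuinely excluded: since $NSAT^*\neq\emptyset$, no canonical pair can reduce into a pair with an empty second component, so some nondegeneracy hypothesis is unavoidable.) Using the {\bf NP}-machine for $B$ together with the Cook--Levin translation, I associate to each string $x$ a propositional formula $\beta_x$ of size polynomial in $|x|$ such that $\beta_x$ is a tautology iff $x\notin B$; equivalently $(\beta_x,2^m)\in NSAT^*$ iff $x\in B$. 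I arrange the encoding so that $x$ is recoverable from $\beta_x$ in polynomial time and so that $\beta_x$ is never the fixed tautology $p_1\vee\neg p_1$.

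Next I would define $P$ so that \emph{short} proofs are tied to $A$-membership, while completeness is supplied only by \emph{long} generic proofs. A $P$-proof $w$ is read as follows: if $w=\langle 1,x,a\rangle$ and $a$ is a valid $A$-witness for $x$, then $P(w):=\beta_x$ (this is sound, since $x\in A$ forces $x\notin B$, hence $\beta_x\in\mathit{TAUT}$); if $w=\langle 0,\phi,T\rangle$ and $T$ is a complete truth-table verification that $\phi$ is a tautology, then $P(w):=\phi$; otherwise $P(w):=p_1\vee\neg p_1$. This $P$ is polynomial time, sound, complete (the truth-table route proves every tautology) and onto $\mathit{TAUT}$, so it is a proof system. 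The quantitative crux is that a truth-table proof of $\phi$ has length at least $2^{|\phi|}$, so in the regime $m<2^{|\phi|}$ the only $P$-proofs of $\phi$ of length $\le m$ are tag-$1$ proofs (or the junk route, which yields $p_1\vee\neg p_1$); for $\phi=\beta_x$ such a short proof exists exactly when $x\in A$.

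The forward reduction is then immediate: set $f(x):=(\beta_x,2^{q(|x|)})$, where $q$ bounds the length of a tag-$1$ proof built from an $A$-witness for $x$. If $x\in A$ the tag-$1$ proof witnesses $(\beta_x,2^{q(|x|)})\in PR(P)$, and if $x\in B$ then $\beta_x\notin\mathit{TAUT}$, so $(\beta_x,2^{q(|x|)})\in NSAT^*$. The backward reduction $g$ is where the real work lies, and it is the step I expect to be the main obstacle: $PR(P)$ also contains every pair arising from a long generic proof, and $NSAT^*$ contains \emph{all} non-tautologies, neither of which obviously maps into $A$ respectively $B$. The resolution is to split on the size of $m$. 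Given $(\phi,2^m)$: if $m\ge 2^{|\phi|}$, the bound itself affords enough time to decide tautologyhood of $\phi$ by brute force (at most $2^{|\phi|}\le m\le$ the input length many steps), so I output $a_0$ when $\phi\in\mathit{TAUT}$ and $b_0$ otherwise; if $m<2^{|\phi|}$, I output the decoded $x$ when $\phi=\beta_x$, output $a_0$ when $\phi=p_1\vee\neg p_1$, and output $b_0$ in all remaining cases.

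Finally I would verify $g(PR(P))\subseteq A$ and $g(NSAT^*)\subseteq B$. In the large-$m$ regime this is immediate from the brute-force test and the choice of defaults. In the small-$m$ regime, membership in $PR(P)$ forces $\phi$ to be either $p_1\vee\neg p_1$ or some $\beta_x$ with $x\in A$ (by the quantitative point of the second paragraph), so $g$ lands in $A$; membership in $NSAT^*$ forces $\phi\notin\mathit{TAUT}$, hence $\phi\neq p_1\vee\neg p_1$, and if $\phi=\beta_x$ then $x\in B$, so $g$ lands in $B$. Together with $f$ this yields the claimed equivalence. The delicate points I would check with care are that the two regimes are separated in polynomial time (comparing the number $m$ against $2^{|\phi|}$, where $m\le$ input length) and that the brute-force tautology test in the large-$m$ regime runs in time polynomial in the total input length $|\phi|+m$.
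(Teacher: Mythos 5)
The paper itself contains no proof of this proposition---it is quoted from~\cite{GSZ}---so your attempt can only be judged on its own merits; its architecture (short proofs encode $A$-witnesses, completeness supplied by an exponentially long generic route, backward reduction split into a small-$m$ and a large-$m$ regime) is indeed the standard one. But there is a genuine flaw in the quantitative crux: you claim that a truth-table proof of $\phi$ has length at least $2^{|\phi|}$. It does not. A complete truth-table verification has length roughly $2^{v(\phi)}\cdot\mathrm{poly}(|\phi|)$, where $v(\phi)$ is the number of distinct variables of $\phi$, and $v(\phi)$ can be far smaller than $|\phi|$. This breaks the correctness of your backward reduction $g$ in the small-$m$ regime: take a tautology $\phi$ of length $n$ with $O(1)$ variables that is neither the fixed tautology $p_1\vee\neg p_1$ nor of the form $\beta_x$; its tag-$0$ proof has length $O(n)$, so $(\phi,2^{cn})\in PR(P)$ for a constant $c$, while $cn<2^{n}$ places this pair in your small-$m$ branch, where $g$ outputs $b_0\in B$---violating the required $g(PR(P))\subseteq A$.

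The repair is local and standard: either pad the tag-$0$ proofs, requiring them to have the form $\langle 0,\phi,T,1^{2^{|\phi|}}\rangle$, so that your stated length bound becomes true by fiat; or keep the proofs as they are and use $2^{v(\phi)}$ as the regime threshold throughout---truth-table proofs then genuinely have length at least $2^{v(\phi)}$, and in the regime $m\geq 2^{v(\phi)}$ the brute-force tautology test runs in time $2^{v(\phi)}\cdot\mathrm{poly}(|\phi|)\leq m\cdot\mathrm{poly}(|\phi|)$, which is polynomial in the input length $|\phi|+m$ (recall the second component $2^m$ is given in binary, so $m$ is bounded by the input length). With either fix, the remaining ingredients of your argument---injectivity and polynomial-time decodability of $x\mapsto\beta_x$, the junk-proof clause mapping to the fixed tautology, and your nondegeneracy caveat that $A,B\neq\emptyset$ (which is indeed unavoidable, since $PR(P)$ and $NSAT^*$ are always nonempty)---go through and yield the claimed polynomial equivalence.
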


% The idea of the proof is
% to take any proof system and add the tautologies that express that
% $A\cap B\cap\{ 0,1\}^n=\emptyset$ for $n=1,2,\dots$. Observe that
% there exists a polynomial $p$ such that if $x\in B$, $|x|=n$, then the
% proposition $\alpha_n(w)$ expressing $w\not\in A\cap\{ 0,1\}^n$ has a
% $P$-proof of length at most $n$, and
% the same with $A$ and $B$ swtiched. So $w\mapsto\alpha_n(w)$ is a
% polynomial reduction of $(A,B)$ to the canonical pair of~$P$.  

Furthermore, if $P$ and $Q$ are proof systems and there exists a polynomial
$p$ such that for every tautology $\phi$, if $\phi$ has a $P$-proof of
length $n$, then $\phi$ has a $Q$-proof of length at most $p(n)$, then
the canonical pair of $P$ is polynomially reducible to the canonical
pair of~$Q$. Indeed, the mapping $(\phi,2^n)\mapsto (\phi,2^{p(n)})$
is such a reduction. Thus we get:

\begin{proposition}[\cite{Razborov94,KMT}]\label{disjNP-CON-N}
If $P$ is a length-optimal proof system, then its canonical pair is a complete
disjoint {\bf NP} pair with respect to polynomial reductions
(i.e., every disjoint {\bf NP} pair is reducible to it).\footnote{Razborov proved this fact for p-optimal proof systems (see Definition~\ref{def8} below); K\"obler, Messner and Tor\'an improved it to length optimal proof systems.}
\end{proposition}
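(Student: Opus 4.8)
The plan is to obtain the proposition by composing two facts already established just above: the \emph{universality} of canonical pairs and the explicit length-preserving reduction between canonical pairs of systems that simulate one another. I would first record that polynomial reducibility of disjoint {\bf NP} pairs is transitive---if a polynomial-time $f$ sends $A$ into $C$, $B$ into $D$ and a polynomial-time $g$ sends $C$ into $E$, $D$ into $F$, then $g\circ f$ is polynomial-time and reduces $(A,B)$ to $(E,F)$---so that reductions may be chained.

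Now fix an arbitrary disjoint {\bf NP} pair $(A,B)$. By the universality of canonical pairs recorded above (the proposition of~\cite{GSZ}, or already its weaker ``reducible to'' form), there is a proof system $Q$ with $(A,B)$ polynomially reducible to its canonical pair $(PR(Q),NSAT^*)$. It thus suffices to reduce $(PR(Q),NSAT^*)$ to the canonical pair $(PR(P),NSAT^*)$ of the length-optimal system $P$ and then compose. For this I would use the explicit reduction described right before the statement, in the form: \emph{if there is a polynomial $p$ such that every tautology with a $Q$-proof of length $n$ has a $P$-proof of length at most $p(n)$, then $(\phi,2^n)\mapsto(\phi,2^{p(n)})$ reduces $(PR(Q),NSAT^*)$ to $(PR(P),NSAT^*)$.} This map is polynomial-time computable (the exponent grows only from $\sim n$ to $\sim p(n)$ bits); it sends $PR(Q)$ into $PR(P)$ by the simulation hypothesis (taking $p$ monotone, so a $Q$-proof of length $\le n$ yields a $P$-proof of length $\le p(n)$); and it sends $NSAT^*$ into $NSAT^*$ because membership there depends only on $\neg\phi$ being satisfiable and ignores the exponent. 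The hypothesis of this reduction is exactly what length-optimality of $P$ supplies when applied to the particular system $Q$. Chaining $(A,B)\le(PR(Q),NSAT^*)\le(PR(P),NSAT^*)$ yields $(A,B)\le(PR(P),NSAT^*)$, and since $(A,B)$ was arbitrary, the canonical pair of $P$ is complete.

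There is no genuine obstacle here: the content lies entirely in the universality result and the explicit reduction, and the proposition is merely their composition. The only points requiring attention are bookkeeping. One must read length-optimality in the direction that makes $P$ \emph{dominant}---it must guarantee, for each $Q$, that $Q$-proofs translate into $P$-proofs with polynomial blow-up, so that it is the universal system $Q$ whose canonical pair reduces \emph{to} that of $P$. One should also replace the simulation polynomial by a monotone majorant so that the bound ``length $\le n$'' is handled uniformly, and verify that the shared second component $NSAT^*$ is respected by the map---both entirely routine.
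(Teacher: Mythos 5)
Your proof is correct and is essentially the paper's own argument: the paper derives the proposition (``Thus we get'') by composing exactly the two facts you use, namely the universality of canonical pairs and the reduction $(\phi,2^n)\mapsto(\phi,2^{p(n)})$ between canonical pairs of systems one of which length-simulates the other. Your caution about the direction of length-optimality is well placed---the paper's definition as printed has the roles of $P$ and $Q$ transposed, and your reading (every $Q$-proof of length $n$ yields a $P$-proof of length at most $p(n)$, with $p$ taken monotone) is the intended one and the one the argument requires.
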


Therefore the following conjecture is a strengthening of
Conjecture~{\sf CON$^N$}.

\begin{conjecture}{\sf DisjNP}
  There exist no complete disjoint {\bf NP} pair (with respect to
  polynomial reductions).
\end{conjecture}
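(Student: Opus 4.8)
The plan is to establish the assertion attached to this conjecture, namely that {\sf DisjNP} is a \emph{strengthening} of {\sf CON}$^N$; concretely, what is provable here is the single implication {\sf DisjNP}$\,\Rightarrow\,${\sf CON}$^N$ (no converse is claimed, in keeping with the fact that we cannot unconditionally separate such conjectures). The starting point is the equivalence, established in~\cite{KP} and recalled just above, between {\sf CON}$^N$ and the statement that \emph{there exists no length-optimal proof system}. Thus it suffices to show that {\sf DisjNP} implies the nonexistence of a length-optimal proof system.

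I would argue by contraposition. Suppose {\sf CON}$^N$ fails; then, by the equivalence just recalled, there is a length-optimal proof system $P$. By Proposition~\ref{disjNP-CON-N}, the canonical pair of $P$ is then a complete disjoint {\bf NP} pair with respect to polynomial reductions. But this directly contradicts {\sf DisjNP}, which asserts that no complete disjoint {\bf NP} pair exists. Hence, assuming {\sf DisjNP}, no length-optimal proof system can exist, so {\sf CON}$^N$ holds. This is exactly the claimed strengthening.

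There is essentially no obstacle in this last step, because all the genuine content has already been packed into Proposition~\ref{disjNP-CON-N}: its proof (via Razborov's canonical pairs and the completeness argument of K\"obler, Messner and Tor\'an) is where one verifies that simulation between proof systems descends to a polynomial reduction between their canonical pairs, and that length-optimality upgrades this to completeness among all disjoint {\bf NP} pairs. Once that proposition is in hand, the present statement is an immediate contrapositive. The one conceptual point worth stressing is that ``strengthening'' here means implication in a single direction only: {\sf DisjNP} may in principle be strictly stronger than {\sf CON}$^N$, and indeed the expectation is that the canonical-pair formulation carries more information than bare length-optimality, but such strictness is neither asserted nor presently provable.
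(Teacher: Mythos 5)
Your proposal matches the paper's own justification exactly: the paper does not (and cannot) prove the conjecture itself, but supports the claim that {\sf DisjNP} strengthens {\sf CON}$^N$ precisely by combining the equivalence from~\cite{KP} of {\sf CON}$^N$ with the nonexistence of a length-optimal proof system together with Proposition~\ref{disjNP-CON-N}, which is the contrapositive argument you give. You also correctly note the one-directional nature of the implication, consistent with the paper's citation of the oracle of~\cite{GSSZ} separating the two conjectures.
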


Gla{\ss}er et al.~\cite{GSSZ} constructed an oracle relative to which there
is no complete disjoint {\bf NP}-pair. Other than that, we have little
supporting evidence. A combinatorial characterization of the canonical
pair has only been found for the resolution proof system. 
In~\cite{GSSZ} they also constructed an oracle relative to which there
exists a complete disjoint {\bf NP}-pair, but no length-optimal proof system
exists, i.e., Conjecture~{\sf DisjNP} fails, but Conjecture~{\sf CON$^N$}
holds true.

\subsection{Disjoint {\bf coNP} pairs}

We now turn to disjoint {\bf coNP} pairs. When comparing different
disjoint {\bf coNP}-pairs, one can use the same
polynomial reduction as used for disjoint {\bf NP}-pairs; 
hence one can also ask similar questions. In particular, are there
disjoint {\bf coNP} pairs inseparable by a set in {\bf P}? Are there
complete disjoint {\bf coNP} pairs? We believe that the answer to the
first question is yes, because we accept ${\bf NP\cap coNP}\neq{\bf P}$
as a very likely fact. The answer to the second question is less
clear, but we still lean to the negative answer.

\begin{conjecture}{\sf DisjCoNP}
  There exist no complete disjoint {\bf coNP} pair (with respect to
  polynomial reductions).
\end{conjecture}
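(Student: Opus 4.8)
Since {\sf DisjCoNP} is a conjecture rather than a provable statement, the realistic goal of a ``proof'' here is the one pursued throughout the paper: to situate it in the hierarchy by deriving it from, or reducing it to, one of the principal conjectures. The guiding observation I would start from is that a disjoint {\bf coNP} pair is essentially the same datum as a total search problem. If $(A,B)$ is a disjoint {\bf coNP} pair, then $\bar A,\bar B\in{\bf NP}$, so there are polynomial-time relations $\alpha,\beta$ with $x\notin A\equiv\exists^p u\,\alpha(x,u)$ and $x\notin B\equiv\exists^p v\,\beta(x,v)$, and disjointness says precisely that for every $x$ at least one of the two witnesses exists. Hence the task ``given $x$, output a $u$ with $\alpha(x,u)$ or a $v$ with $\beta(x,v)$'' is total and its solution relation is polynomial-time, so it is a genuine {\bf TFNP} problem $P_{A,B}$. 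This makes Conjecture~{\sf TFNP} the natural target to connect with.

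Concretely, I would try to show that a complete disjoint {\bf coNP} pair would yield a complete {\bf TFNP} problem, which by contraposition gives the implication {\sf TFNP}$\,\Rightarrow\,${\sf DisjCoNP} and places {\sf DisjCoNP} below {\sf TFNP} in the classification. Let $(U,V)$ be complete for disjoint {\bf coNP} pairs and put $P:=P_{U,V}$. Given an arbitrary {\bf TFNP} problem $Q$, I would recover a solution of $Q(x)$ bit by bit using $P$ as an oracle, exploiting that Definition~\ref{d-5} allows oracle (Turing) reductions. For a prefix $w$ of a candidate solution, consider the two {\bf coNP} sets ``every solution of $Q(x)$ extending $w$ has next bit $1$'' and ``$\dots$ has next bit $0$''; reducing this pair to $(U,V)$ by completeness and querying $P$ at the image of $(x,w)$ tells me, from the tag of the returned witness, a bit value $b$ with which some solution is still consistent. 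Maintaining the invariant that some solution extends the current prefix, after polynomially many queries the entire solution is fixed, yielding a polynomial reduction $Q\leq P$ and hence completeness of $P$.

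The hard part will be the mismatch between the two notions of reduction. Reductions of disjoint pairs are \emph{decision} reductions: the map $f$ need only send $A$ into $U$ and $B$ into $V$, and it carries no information about the {\bf NP}-witnesses, whereas a {\bf TFNP} reduction must translate \emph{solutions}. Thus the naive correspondence $(A,B)\mapsto P_{A,B}$ is not visibly reduction-preserving, and the bit-extraction argument must manufacture usable information from membership facts alone. A closely related wrinkle is global disjointness: the prefix-pairs above are disjoint only on inputs for which a solution extending $w$ exists, while a legitimate disjoint pair must be disjoint on \emph{all} inputs, so a tie-breaking or padding convention is needed to repair the ``no extension'' inputs without leaving the class {\bf coNP}. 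I expect this to be cleaner in the {\bf NP}-relation formulation $\overline{\bf TFNP}$, which by the propositions of Section~\ref{sec4} has the same complete problems and matches the two-sided ``$x\notin A$ or $x\notin B$'' shape exactly. A dual route worth keeping in reserve is the proof-system one used for {\sf DisjNP}: attach to each proof system a canonical disjoint {\bf coNP} pair and argue that no single pair can be universal; but there too the crux is showing that one pair cannot absorb the witnesses of all the others. Finally, any unconditional proof is of course out of reach, since it would entail separations we cannot currently establish.
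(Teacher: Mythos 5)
You have the implication backwards relative to what the paper establishes, and the direction you attempt is genuinely open. The paper's way of situating {\sf DisjCoNP} is Proposition~\ref{TFNP-coNP}: a \emph{complete {\bf TFNP} problem yields a complete disjoint {\bf coNP} pair}, whose contrapositive is {\sf DisjCoNP}$\Rightarrow${\sf TFNP}; in the paper's diagram {\sf DisjCoNP} sits \emph{above} {\sf TFNP} as the stronger conjecture. The paper gets this via the canonical disjoint {\bf coNP} pair of a {\bf TFNP} problem $(p,R)$, with elements $(x,C)$ for $C$ a Boolean circuit and $(x,C)\in A_i\equiv\forall y(R(x,y)\to C(y)=i)$, plus two lemmas: every disjoint {\bf coNP} pair reduces to some canonical pair, and many-one {\bf TFNP} reductions induce pair reductions (the circuit trick $D_x(y)=C(g(x,y))$ is exactly what makes reduction-preservation work). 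Your plan instead tries to show that a complete disjoint {\bf coNP} pair yields a complete {\bf TFNP} problem, i.e.\ {\sf TFNP}$\Rightarrow${\sf DisjCoNP}; that would be a new result not claimed in the paper (indeed the paper's open problems still ask merely for an oracle making {\sf DisjCoNP} true), so even if your argument were completed it would not reproduce the paper's placement of the conjecture but would collapse two levels of the hierarchy.

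Moreover, the obstacle you flag yourself is fatal as things stand, and it is worth seeing precisely where. The tag-only use of the oracle is actually fine: if $(A_w,B_w)$ were a genuine disjoint {\bf coNP} pair, a reduction $f$ to the complete pair $(U,V)$ plus one query to $P_{U,V}$ returns a tag telling you $f(x,w)\notin V$ or $f(x,w)\notin U$, whence by contraposition $x\notin B_w$ or $x\notin A_w$, and either outcome extends the prefix while preserving the invariant --- no witness translation is needed. The step that fails is disjointness: on inputs $(x,w)$ where no solution of $Q(x)$ extends $w$, both universal conditions hold vacuously, so $A_w\cap B_w\neq\emptyset$, the pair is not a disjoint {\bf coNP} pair, and completeness of $(U,V)$ gives you no reduction at all --- its guarantee is void off the promise. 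Repairing this means excising the bad inputs, but the condition ``some solution extends $w$'' is {\bf NP}, and intersecting a {\bf coNP} set with it leaves {\bf coNP}; no padding convention helps because the defect is semantic (it is exactly the promise-problem/disjoint-pair gap), not syntactic. So the proposal neither matches the paper's argument nor closes the gap in its own; the honest conclusion is that {\sf DisjCoNP}$\Rightarrow${\sf TFNP} is what can be proved (and is proved in the paper), while your converse remains a conjecture-level question.
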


The next proposition states that 
Conjecture~{\sf TFNP} is a consequence of the above conjecture. 

\bpr\label{TFNP-coNP}
If there exists a complete {\bf TFNP} problem, then there exists a
complete disjoint {\bf coNP} pair.
\epr

The proposition follows from the two lemmas below. First we need a definition.

\begin{definition}
Let a {\bf TFNP} problem $(p,R)$ be
given. Assume that $R(x,y)\tto |y|=p(|x|)$. The \emph{canonical disjoint {\bf coNP} pair of $(p,R)$} is the pair  $(A_0,A_1)$ defined as
follows. The elements of $A_0\cup A_1$ are pairs $(x,C)$ where $x$
is an arbitrary binary string and $C$ is a Boolean circuit with
$p(|x|)$ bit-inputs and one bit-output. The sets $A_0$ and $A_1$ are defined
by
\bel{canonical-coNP}
(x,C)\in A_i\ \equiv\ \forall y (R(x,y)\to C(y)= i).
\ee
\end{definition}
The condition that, for a given $x$, all elements $y$ satisfying $R(x,y)$ have the same length is, clearly, not essential, because we can always pad the string $y$ to the maximal length $p(|x|)$.

\bl
For every disjoint {\bf coNP} pair  $(B_0,B_1)$ there exists a {\bf TFNP} problem $(p,R)$ such that  $(B_0,B_1)$ is polynomially reducible to the canonical disjoint {\bf coNP} pair of $(p,R)$.
\el

\bprf
Let a disjoint {\bf coNP} pair $(B_0,B_1)$ be given. Suppose that $B_i$s are  defined by
\[
x\in B_i\ \equiv\ \forall y(|y|\leq r_i(|x|)\to\beta_i(x,y))
\]
for $i=0,1$, where $\beta_i$ is computable in polynomial time and
$r_i$ is a polynomial. Let the
binary relation $R$ be defined by
\[
R(x,z)\ \equiv\ \exists i\in\{0,1\}\exists y
(z=(i,y)\wedge |y|\leq r_i(|x|)\wedge \neg\beta_i(x,y)).
\]
Since $\beta_i$s are computable in polynomial time, so is also $R$ and
the length of every $z$ satisfying $R(x,z)$ is polynomially bounded in
the length of $x$. Furthermore, since $B_0$ and $B_1$ are disjoint,
$R$ is total. Again, by suitably padding $z$ we may ensure that  
$R(x,z)\tto |z|=p(|x|)$ for some polynomial $p$. 
Let $(A_0,A_1)$ be the canonical pair of $(p,R)$. 
The pair $(B_0,B_1)$ is reducible to $(A_0,A_1)$  by the mapping
\[
x\ \mapsto\ (x,C),
\]
where $C$ is a circuit such that $C(i,y)=1-i$, because for this $C$, $(x,C)\in A_j$ iff $x\in B_j$.
\eprf

\bl
Let $(p,P)$ and $(q,Q)$ be two {\bf TFNP} problems such that $R(x,y)\tto |y|=p(|x|)$ and $Q(x,y)\tto |y|=q(|x|)$. Let $(A_0,A_1)$
respectively $(B_0,B_1)$ be their canonical {\bf coNP} pairs and
suppose that 
$(p,P)$ is polynomially many-one reducible to $(q,Q)$. Then $(A_0,A_1)$
is reducible to $(B_0,B_1)$.
\el
\bprf
Let $(p,P)$, $(q,Q)$ and a polynomial many-one reduction $(f,g)$ of
$(p,P)$ to $(q,Q)$ be given.  Let $(A_0,A_1)$ and $(B_0,B_1)$ be the
canonical {\bf coNP} pairs of $(p,P)$ and $(q,Q)$. We define a
polynomial reduction of $(A_0,A_1)$ to $(B_0,B_1)$ as follows. For an
input of the form $(x,C)$ where $C$ is a Boolean circuit, we put
\[
h(x,C)=(f(x),D_x),
\]
where $D_x$ is a Boolean circuit with $q(|f(x)|)$ bit inputs such that for all
$y$ of length $q(|f(x)|)$,
\bel{e54}
D_x(y) = C(g(x,y)).
\ee
If an input $z$ does not have the required form, we put $h(z)=0$. We will check that this defines a polynomial reduction of  $(A_0,A_1)$ to $(B_0,B_1)$. Let $(x,C)\in A_i$ and let $y$ be any number such that $|y|=q(|f(x)|)$ and $Q(f(x),y)$. Since $P(x,g(x,y))$, we have $C(g(x,y))=i$ by the definition of $A_i$. By (\ref{e54}), $D_x(y)=i$. This proves that $f(x)\in B_i$. 
\eprf

\subsection{Multivalued functions}

A class closely related to {\bf TFNP} and the question whether there
exists a complete problem in this class were studied by Beyersdorff, K\"obler and Messner~\cite{BKM}. We need a couple of preliminary definitions.

A multivalued partial function $f$ is called an \emph{{\bf NP}
  multivalued function} if it is computed by a nondeterministic
polynomial time Turing machine $M$ in the following sense. $M$ stops
in two possible states: ACCEPT and REJECT. For a given input value $x$,
the values of $f$ are those words on the output tape which appear when
the state ACCEPT is reached. For a function $f\in{\bf NPMV}$ we denote
by $f\{ x \}$ the set of all values for the input $x$. Thus $f$ is total iff $f\{x\}\neq\emptyset$ for all $x$. 
The class of {\bf NP} \emph{multivalued
functions} is denoted by {\bf NPMV}.  The class of \emph{total} {\bf
  NP} \emph{multivalued functions} is denoted by {\bf NPMV}$_t$. 

By their nature, {\bf NPMV}$_t$ functions are {\bf TFNP}
problems, but there is an essential 
difference in how one defines reduction. For $f,g\in{\bf
NPMV}$, we say that $f$ is polynomially reducible to $g$ if there
exists a polynomial time computable function $h$ such that for all $x$,
\[
f\{x\}=g\{h(x)\} .
\] 

A relation to our Conjecture~{\sf TFNP} is given by the following
proposition. 
 
\bpr\label{TFNP-NPMV}
The existence of a complete function in {\bf NPMV}$_t$
implies the existence of a complete {\bf TFNP} problem. 
\epr
\bprf Let
$g$ be a complete function in {\bf NPMV}$_t$. We can represent $g$
using a polynomial time computable ternary relation as follows.
\[
g\{x\}=\{y;\ \exists^p z.R(x,y,z)\}.
\]
Recall that the superscript at the existential quantifier means that we tacitly assume that there exists a polynomial bound $p$ such that
$R(x,y,z)$ is satisfied only if the lengths of $y$ and $z$ are bounded 
by $p(|x|)$. Define
\[
Q(x,u):= R(x,(u)_1,(u)_2).
\]
We claim that $Q$ defines a complete {\bf TFNP} problem. Let $S(x,y)$
be a binary relation computable in polynomial time viewed as a {\bf
  TFNP} problem (again, we tacitly assume an implicit polynomial bound on the
length of $y$). Define a function $f\in \mbox{\bf NPMV}_t$ by
\[
f\{x\}:=\{y;\ S(x,y)\}.
\]
Since $f$ is reducible to the complete function $g$, there exists a
polynomial time computable function $h$ such that $f\{x\}=g\{h(x)\}$,
which is equivalent to
\[
\{y;\ S(x,y)\}=\{y;\ \exists z.R(h(x),y,z)\}=\{y;\ \exists z.Q(h(x),(y,z))\}.
\]
Thus the pair of functions $h,k$, where $k(u):=(u)_1$, is a polynomial
reduction of $S$ to $Q$.
\eprf

We do not know if the opposite implication holds true. 
Beyersdorff et al. \cite{BKM} %(Theorem 9) 
proved that if there exists a complete function in {\bf NPMV}$_t$, then there
exists a complete disjoint {\bf coNP} pair. This is now a consequence
of Propositions \ref{TFNP-coNP} and  \ref{TFNP-NPMV}.

\section{Classification of conjectures}\label{sec6}

%% In this section we propose some classification of the conjectures
%% considered so far. This should be viewed as a preliminary proposal,
%% because more research is needed to find clear and natural criteria for
%% classification. But first we need to introduce a few more conjectures.

\subsection{Uniform and nonuniform}

A more natural way to compare proof systems than just comparing the
lengths of proofs is polynomial simulation. This is a concept,
introduced in~\cite{cook75}, is similar to polynomial reductions used in
the theory of {\bf NP}-completeness and those we used to compare {\bf
  TFNP} problems.

\begin{definition}\label{def8}
We say that \emph{a proof $P$ system polynomially simulates a proof
  system $Q$} if there exists a polynomial time computable function
such that given a $Q$-proof $d$ of $\phi$, $f(d)$ is a $P$-proof of
(the same) $\phi$.  We say that \emph{a proof system $P$ is p-optimal}
if it polynomially simulates every proof system.
\end{definition}

Using this concept we can state a conjecture slightly weaker than Conjecture~{\sf CON$^N$}.

\begin{conjecture}{\sf CON}
There exists no p-optimal proof system.
\end{conjecture}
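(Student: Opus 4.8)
The plan is to attack the statement directly. Since {\sf CON} is a genuinely open hypothesis, no unconditional argument is within reach; what I can lay out is the natural diagonalization strategy together with the precise point at which it stalls. Fix an arbitrary proof system $P$ (a polynomial-time computable surjection onto {\it TAUT}) and enumerate all candidate simulations, i.e.\ pairs $(M_i,q_i)$ where $M_i$ is a clocked polynomial-time transducer and $q_i$ a polynomial bounding its running time. By Definition~\ref{def8}, to witness that $P$ is not p-optimal it suffices to produce a single proof system $Q$ such that for every $(M_i,q_i)$ there is a tautology $\varphi$ with a short $Q$-proof $d$ for which $M_i(d)$, computed within $q_i(|d|)$ steps, fails to be a $P$-proof of $\varphi$.

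First I would try to build one ``diagonal'' system $Q$ that defeats all $(M_i,q_i)$ simultaneously. The construction reserves, for each index $i$, a block of tautologies $\{\varphi^i_n\}_n$ and equips $Q$ with very short proofs of them (e.g.\ $Q$ accepts a short certificate as a proof of $\varphi^i_n$), while choosing the $\varphi^i_n$ so that $P$ has no proof that $M_i$ can output within its time bound. The relevant failure is easy to detect: since $P$ is polynomial-time computable, the predicate ``$M_i(d)$ is a valid $P$-proof of $\varphi^i_n$'' is decidable in polynomial time, so the construction can monitor each $M_i$ and react. Keeping $Q$ a legitimate proof system---polynomial-time verifiable, sound, and onto {\it TAUT}---is then a bookkeeping matter, handled by padding and by letting $Q$ default to a fixed complete system on all inputs not reserved for diagonalization.

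The main obstacle, and the reason {\sf CON} is a conjecture rather than a theorem, is the supply of hardness witnesses $\varphi^i_n$. To defeat $M_i$ I need tautologies with short $Q$-proofs that $P$ provably cannot prove short, infinitely many of them and uniform enough to be recognized by the construction. Exhibiting such a family unconditionally is exactly a super-polynomial proof-size lower bound against the \emph{fixed} system $P$, and proving such bounds for \emph{arbitrary} $P$ is beyond current methods: if every $P$ were polynomially bounded then {\bf NP} $=$ {\bf coNP}, so any unconditional proof of {\sf CON} already yields {\bf NP} $\neq$ {\bf coNP}, and through Proposition~\ref{disjNP-CON-N} and Conjecture~{\sf DisjNP} one is pushed toward even stronger structural separations. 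Crucially, the diagonalization cannot manufacture these witnesses by self-reference, because ``$\varphi$ is hard for $P$'' is not a property the construction can certify without already possessing the lower bound.

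I therefore expect the feasible outcomes to be partial rather than a full proof: a \emph{relativized} separation exhibiting an oracle under which no p-optimal system exists---paralleling the oracle constructions cited in this paper for the companion conjectures {\sf DisjNP} and {\sf TFNP}---and \emph{conditional} derivations of {\sf CON} from stronger hypotheses, for instance from {\sf CON}$^N$ (Conjecture~\ref{CONN}) via the observation that a p-optimal system is in particular length-optimal. An unconditional proof, however, I expect to founder precisely at the hardness-witness step identified above.
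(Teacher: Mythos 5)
You were right not to manufacture a ``proof'': in the paper {\sf CON} is stated as a conjecture and is never proved, so there is no argument to compare yours against. What the paper offers instead is (i) a proved \emph{equivalence} (from~\cite{KP}) with the uniform statement that for every $S\in\cal T$ there exists $T\in\cal T$ such that $S$-proofs of $Con_T(\bar n)$ cannot be \emph{constructed in polynomial time}, and (ii) informal plausibility arguments (theories yield ever-stronger proof systems via propositional translations of $\tau(\bar n)$; enlarging the class of formulas tends to strengthen systems). Your overall stance --- diagonalization stalls at the supply of hardness witnesses, conditional derivations and relativized separations are the realistic goals, and {\sf CON} follows from {\sf CON$^N$} because a p-optimal system is in particular length-optimal --- is consistent with the paper's position.

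Two concrete corrections, both stemming from the same conflation. First, your claim that ``any unconditional proof of {\sf CON} already yields ${\bf NP}\neq{\bf coNP}$'' contradicts the paper's own classification: it states explicitly that the nonuniform conjectures {\sf CON$^N$} and {\sf RFN$^N_1$} imply ${\bf NP}\neq{\bf coNP}$, while the uniform versions {\sf CON} and {\sf RFN$_1$} are \emph{only known to imply} ${\bf P}\neq{\bf NP}$. Second, the source of the slip is that your diagonalization witnesses are tautologies that $P$ ``provably cannot prove short,'' i.e.\ proof-\emph{length} lower bounds; but refuting p-optimality only requires defeating every polynomial-\emph{time} translation $M_i$, and this can in principle be done with tautologies that \emph{have} short $P$-proofs which are merely hard to construct. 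In other words, your obstacle analysis is really an analysis of {\sf CON$^N$} (length-optimality), which is exactly why {\sf CON} is the weaker, uniform conjecture. The paper's equivalent formulation of {\sf CON} via polynomial-time constructibility of proofs of $Con_T(\bar n)$ makes the right currency explicit: constructibility rather than length, and hence search hardness rather than size lower bounds is where an attack on {\sf CON} proper should be aimed.
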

 
In~\cite{KP} we proved that this conjecture is equivalent to the
following uniform version of Conjecture~{\sf CON$^N$}.

\begin{conjecture}{\sf equivalent to {\sf CON}}
  For every $S\in\cal T$, there exists $T\in\cal T$ such that
  $S$-proofs of $Con_T(\bar n)$ cannot be constructed in polynomial time in $n$.
\end{conjecture}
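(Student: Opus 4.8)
The plan is to prove both directions of the equivalence by exploiting the standard two-way correspondence between theories in $\mathcal T$ and propositional proof systems, mirroring the nonuniform argument of~\cite{KP} that relates {\sf CON}$^N$ to length-optimality, but tracking \emph{poly-time constructibility} rather than mere length. To a theory $T\in\mathcal T$ I would attach the proof system $P_T$ in which a proof of a propositional formula $\phi$ is a $T$-proof of $Taut(\lceil\phi\rceil)$, where $Taut$ is a standard $\Pi^b_1$ ``is a tautology'' predicate. This is a genuine Cook--Reckhow system: $T$-proofs are checkable in polynomial time; soundness follows from consistency of $T$ together with $\Sigma_1$-completeness, since a non-tautology would let $T$ prove both $Taut(\lceil\phi\rceil)$ and its (true, bounded) negation; and completeness is clear. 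Conversely, to a proof system $P$ I would attach the theory $S_P:=S^1_2+RFN(P)$, where $RFN(P)$ is the reflection sentence $\forall w\forall\psi(Prf_P(w,\psi)\to Taut(\psi))$ and $Prf_P$ is the $P$-proof predicate; since $P$ is sound, $\N\models S_P$, so $S_P\in\mathcal T$.

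For the direction assuming the uniform consistency statement fails, I am given $S\in\mathcal T$ such that for every $T\in\mathcal T$ the $S$-proofs of $Con_T(\bar n)$ are constructible in time polynomial in $n$, and I would show $P_S$ is p-optimal. Fixing an arbitrary proof system $Q$, put $T_Q:=S^1_2+RFN(Q)\in\mathcal T$. The crucial step is to construct, uniformly in $w,\phi$, a short $S^1_2$-proof of $Con_{T_Q}(\bar N)\wedge Prf_Q(\bar w,\lceil\phi\rceil)\to Taut(\lceil\phi\rceil)$ for a bound $N$ polynomial in $|w|$: reasoning in $S^1_2$, if $Taut(\lceil\phi\rceil)$ failed there would be a falsifying assignment and hence a short $T_Q$-proof of $\neg Taut(\lceil\phi\rceil)$, while $RFN(Q)$ applied to $Prf_Q(\bar w,\lceil\phi\rceil)$ gives a short $T_Q$-proof of $Taut(\lceil\phi\rceil)$, and the two together refute $Con_{T_Q}(\bar N)$. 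The algorithm producing the $P_S$-proof of $\phi$, i.e.\ the $S$-proof of $Taut(\lceil\phi\rceil)$, then concatenates the poly-time-constructible $S$-proof of $Con_{T_Q}(\bar N)$ supplied by the hypothesis (since $N=\mathrm{poly}(|w|)$, its construction runs in time $\mathrm{poly}(N)=\mathrm{poly}(|w|)$), the short $S^1_2$-proof of $Prf_Q(\bar w,\lceil\phi\rceil)$ furnished by Fact~\ref{f-1}, and the implication above. As $Q$ was arbitrary, $P_S$ p-simulates every system.

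For the converse I am given a p-optimal $P$ and set $S:=S_P$; I must build, for each fixed $T\in\mathcal T$, poly-time $S$-proofs of $Con_T(\bar n)$. Let $\|Con_T(\bar n)\|$ be the standard propositional translation of $Con_T(\bar n)$, a tautology of size polynomial in $n$. The auxiliary theory $T':=S^1_2+Con_T$ lies in $\mathcal T$ (it is true in $\N$, so consistent, which is why I take $S^1_2+Con_T$ rather than $T+Con_T$) and proves each $Con_T(\bar n)$ by a logarithmic-length proof; by the standard efficiency of proof translation, the proof system attached to $T'$ proves $\|Con_T(\bar n)\|$ by poly-time-constructible proofs. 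By p-optimality $P$ p-simulates that system, hence $P$ proves $\|Con_T(\bar n)\|$ by poly-time-constructible proofs $w_n$. Finally, inside $S$ I would invoke $RFN(P)$: from the short (Fact~\ref{f-1}) $S$-proof of $Prf_P(\bar w_n,\lceil\|Con_T(\bar n)\|\rceil)$ one gets $Taut(\lceil\|Con_T(\bar n)\|\rceil)$, and the provable-in-$S^1_2$ soundness of the translation, $Taut(\lceil\|Con_T(\bar n)\|\rceil)\to Con_T(\bar n)$ with poly-size proofs, yields the desired $S$-proof of $Con_T(\bar n)$, produced in polynomial time in $n$.

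The main obstacle I expect is maintaining full \emph{uniformity}, poly-time constructibility rather than just poly-size bounds, at every step, since this is precisely what separates {\sf CON} from the already-settled {\sf CON}$^N$/length-optimality equivalence; it rests on Fact~\ref{f-1} and on the hypotheses being about construction \emph{time}. The second delicate point, which replaces the reflection principle one might naively expect to need, is the consistency trick in the first direction: one never proves $RFN(Q)$ in $S$, but instead uses boundedness of the hypothetical refutation to convert a single bounded-consistency statement $Con_{T_Q}(\bar N)$ into the required instance of reflection. I would also verify carefully that the auxiliary theories $T_Q$, $T'$ and $S_P$ are genuinely consistent, hence in $\mathcal T$, without assuming soundness of the arbitrary given $T$.
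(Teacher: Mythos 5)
Your proposal is correct and follows essentially the same route as the proof the paper relies on (the equivalence is cited to Kraj\'{\i}\v{c}ek--Pudl\'ak~\cite{KP}): the theory-to-system correspondence $T\mapsto P_T$ via $T$-proofs of $Taut(\lceil\phi\rceil)$ and $P\mapsto S^1_2+RFN(P)$, the bounded-consistency trick replacing reflection in the first direction, and propositional translations of $Con_T(\bar n)$ plus $RFN(P)$ in the second, are exactly the ingredients of that argument, transposed as you say from length bounds to poly-time constructibility. The only step you leave implicit --- that the auxiliary implications (e.g.\ $Con_{T_Q}(\bar N)\wedge Prf_Q(\bar w,\lceil\phi\rceil)\to Taut(\lceil\phi\rceil)$ and the two directions of translation correctness) have short $S^1_2$-proofs constructible in polynomial time --- is the standard formalized $\Sigma^b_1$-completeness of $S^1_2$, which is precisely what the cited proof uses.
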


A uniform version of Conjecture~{\sf RFN$^N_1$} is obtained in the same
way.

\begin{conjecture}{\sf RFN$_1$}
  For every $S\in\cal T$, there exists $T\in\cal T$ such that
  $S$-proofs of $\Sigma_1^bRFN_T(\bar n)$ cannot be constructed in
  polynomial time in $n$.
\end{conjecture}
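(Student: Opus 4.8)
Strictly speaking {\sf RFN$_1$} is a conjecture: its nonuniform form {\sf RFN$^N_1$} already implies {\bf NP}$\neq${\bf coNP} (by the Proposition following that conjecture), so no unconditional proof is within reach and the sensible goal is to \emph{locate} {\sf RFN$_1$} in the hierarchy. The natural statement to establish here is the uniform analogue of the implication ``{\sf CON$^N$} implies {\sf RFN$^N_1$}'' proved above, namely that Conjecture~{\sf CON} implies Conjecture~{\sf RFN$_1$}. The plan is to reuse the reduction of the finite reflection principle to finite consistency, upgraded so that the relevant proofs are not merely short but \emph{constructible} in polynomial time.

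First I would isolate the one ingredient that makes the uniform version go through: taking $\alpha$ to be the formula $0=1$, the sentence $\Sigma_1^bRfn_T^\alpha(\bar n)$ is $S^1_2$-provably equivalent to $Con_T(\bar n)$ with proofs constructible in time polynomial in $|n|$, and Lemma~\ref{l-3.2} applied to this \emph{fixed} $\alpha$ (whose universal-formula index $e$ and polynomial $p$ from~(\ref{e-univ}) are then also fixed, and in particular independent of $T$, since $\alpha$ carries no occurrence of $Pr_T$) supplies a \emph{fixed} polynomial $q$ together with a polynomial-time procedure that, on input $n$, outputs an $S^1_2$-proof of $\Sigma_1^bRFN_T(\overline{q(n)})\to Con_T(\bar n)$. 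The word I rely on is ``procedure'': Lemma~\ref{l-3.2} provides an algorithm, not just a length bound, which is exactly what the uniform (constructibility) reading of the conjectures demands.

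Then I would argue directly, keeping the \emph{same} witness $T$. Assume {\sf CON} and fix an arbitrary $S\in\cal T$; let $T\in\cal T$ be such that $S$-proofs of $Con_T(\bar n)$ cannot be constructed in time polynomial in $n$. I claim this same $T$ witnesses {\sf RFN$_1$} for $S$. Indeed, if $S$-proofs of $\Sigma_1^bRFN_T(\bar n)$ could be constructed in polynomial time, then on input $n$ one could construct, in time polynomial in $q(n)$ and hence polynomial in $n$, an $S$-proof of $\Sigma_1^bRFN_T(\overline{q(n)})$, append the polynomial-time-constructed $S^1_2$-proof of $\Sigma_1^bRFN_T(\overline{q(n)})\to Con_T(\bar n)$, and close with modus ponens, obtaining a polynomial-time construction of an $S$-proof of $Con_T(\bar n)$---contradicting the choice of $T$. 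Since $S$ was arbitrary, {\sf RFN$_1$} follows.

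The only place demanding care is the uniform bookkeeping: one must check that substituting the numeral $\overline{q(n)}$ into the assumed polynomial-time constructor for $\Sigma_1^bRFN_T$ keeps the total running time polynomial in $n$ (it does, because $q$ is a single fixed polynomial), and that the padding implicit in passing between $\Sigma_1^bRFN_T$ and the single-formula reflection $\Sigma_1^bRfn_T^{0=1}$ does not inflate the constructed proofs. All of this is routine given Lemma~\ref{l-3.2}, so the conditional implication is not where the difficulty lies. The genuine obstacle is unconditional provability: like every conjecture in this paper, {\sf RFN$_1$} asserts a lower bound that present techniques cannot reach, and its nonuniform strengthening {\sf RFN$^N_1$} already entails {\bf NP}$\neq${\bf coNP}.
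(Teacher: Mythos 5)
You are right that {\sf RFN$_1$} appears in the paper only as a conjecture with no unconditional proof, and your conditional derivation---instantiating Lemma~\ref{l-3.2} at the fixed formula $0=1$, noting that the lemma supplies a time-$r(|n|)$ \emph{constructor} and not just a length bound, and composing it with the assumed polynomial-time constructor for $\Sigma_1^bRFN_T(\bar n)$ via modus ponens in $S\supseteq S^1_2$---is exactly the uniform analogue of the paper's own argument that {\sf CON$^N$} implies {\sf RFN$^N_1$}, and it matches the implication from {\sf CON} to {\sf RFN$_1$} recorded in the paper's diagram of conjectures. Your bookkeeping (the fixed polynomial $q$, substitution of the numeral $\overline{q(n)}$, and the polylogarithmic-time equivalence between $\Sigma_1^bRfn_T^{0=1}(\bar n)$ and $Con_T(\bar n)$) is sound, so this is essentially the same approach as the paper's.
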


Except for modifications of these conjectures, such as Conjecture~{\sf CON$^{N+}$}, we do not know of any other pair of uniform and nonuniform conjectures. In particular, {\sf TFNP} is apparently uniform, but we do not know if it has a nonuniform companion.

Note that ${\bf NP}\neq{\bf coNP}$ is implied by the nonuniform conjectures {\sf CON$^N$} and {\sf RFN$^N_1$}, while the uniform versions {\sf CON} and {\sf RFN$_1$} are only known to imply ${\bf P}\neq{\bf NP}$. Thus we should also classify ${\bf NP}\neq{\bf coNP}$ as nonuniform and ${\bf P}\neq{\bf NP}$ as uniform.
Then it may seem strange that according to this classification ${\bf NP}\neq{\bf coNP}$ should be a nonuniform conjecture, in spite of the fact that both {\bf NP} and {\bf coNP} are uniform complexity classes.
But if we look at  ${\bf NP}\neq{\bf coNP}$ from the point of view of proof complexity, then it is clearly a nonuniform version of ${\bf P}\neq{\bf NP}$. Just consider the following equivalent formulations of these conjectures:
\bi
\item  ${\bf P}\neq{\bf NP}$ $\Leftrightarrow$ there exists a proof system $P$ such that for every tautology $\tau$ a $P$-proof of $\tau$ \emph{can be constructed in polynomial time;}
\item ${\bf NP}\neq{\bf coNP}$ $\Leftrightarrow$ there exists a proof system $P$ such that  every tautology $\tau$ \emph{has a $P$-proof of polynomial length.}
\ei
However, although Conjecture~{\sf DisjNP} seems to be uniform, it does imply the nonuniform Conjecture~{\sf CON$^N$} (see Proposition~\ref{disjNP-CON-N}). We do not have an explanation for this.

\subsection{Logical complexity}

We started with statements about finite consistency, statements that
express facts about logic, and eventually arrived at statements about
disjoint sets of certain complexity, statements from 
structural complexity theory that apparently have nothing to do with
the main theme of incompleteness. But one should realize that
expressing these conjectures using concepts from computational
complexity theory is just a convenient way to state them. It seems that it should be possible  to present all uniform conjectures as statements about unprovability of
certain sentences in theories from the class $\cal T$. The following proposition shows how to state Conjecture~{\sf CON} in this way.

\bpr\label{pr-6.1}
There exists a $p$-optimal proof system (for {\it TAUT}) if and only if there exists a
theory $T\in\cal T$ such that for every proof system $P$ there exists
a definition of $P$ by a $\Delta^b_1$ formula 
such that $T$ proves the soundness of $P$ represented by this formula.
\epr
For the proof, see \cite{kniha}, pages 578-9.
Next proposition shows how to express Conjecture~{\sf DisjNP} as a statement about unprovability of certain sentences.

\bpr\label{pr-6.2}
There exists a complete disjoint {\bf NP} pair if and only if there exists
a theory $T\in\cal T$ such that for every disjoint {\bf NP} pair
$(B_0,B_1)$ there are $\Sigma^b_1$ definitions of $B_0$ and $B_1$ for
which $T$ proves that they define disjoint sets.
\epr
\bprf
Suppose that there exists a complete disjoint {\bf NP} pair $(A_0,A_1)$. Let
$\exists^py.\alpha_i(x,y)$ be $\Sigma^b_1$ definitions of $A_i$,
$i=0,1$. Define a theory $T$ to be 
%$S^1_2$ plus an axiom expressing that $A_0\cap A_1=\emptyset$
\[
S^1_2\ +\ \forall x(\neg\exists^py.\alpha_0(x,y)\vee\neg\exists^py.\alpha_1(x,y)).
\]
Let $(B_0,B_1)$ be an arbitrary disjoint {\bf NP} pair. Let
$\exists^py.\beta_i(x,y)$ be some $\Sigma^b_1$ definitions of $B_i$,
$i=0,1$. Since  $(A_0,A_1)$ is complete, there exists a polynomial
time reduction $f$ of $(B_0,B_1)$ to $(A_0,A_1)$. Consider the
following definitions of $B_i$, $i=0,1$, by $\Sigma^b_1$ formulas:
\[
\exists^py.\beta_i(x,y)\wedge\exists^pz.\alpha_i(f(x),z).
\]
It is clear that they define the sets  $B_i$ correctly and that $T$
proves that sets defined by these formulas are disjoint.

The proof of the converse implication is a standard 
argument that we have already presented in the proof of
Lemma~\ref{l-4.1}, so we will be very brief.

Let $T$ be a theory with the property stated in the proposition. For
$i=0,1$, let $A_i$ be the set of tuples $(x,\beta_0,\beta_1,d,a)$ such
that 
\bi
\item $\beta_0$ and $\beta_1$ are $\Sigma^b_1$ formulas, $d$ is a
$T$-proof of the disjointness of the sets defined by $\beta_0$ and $\beta_1$,
$a$ is a nondeterministic time bound for $\beta_0$ and $\beta_1$, and
$\exists^py.\beta_i(x,y)$ holds true.
\ei
We leave to the reader to verify that these conditions define a
disjoint {\bf NP} pair and that every disjoint {\bf NP} pair is
polynomially reducible to it.
\eprf
The non-existence of a complete disjoint {\bf coNP} pair, Conjecture~{\sf DisjCoNP}, can be expressed as a statement about provability in the same way. Conjecture~{\sf TFNP} was, in fact, introduced as a sentence about unprovability in theories in $\cal T$. 

Thus a natural way to classify such conjectures is according to the logical complexity of sentences that are claimed to be unprovable. The two most important classes are $\forall\Pi^b_1$ and $\forall\Sigma^b_1$ (i.e., the sentences of the form: universally quantified $\Pi^b_1$ and $\Sigma^b_1$ formulas). Our uniform conjectures are classified as follows:

\medskip
$\forall\Pi^b_1$ -- {\sf CON, DisjNP};

\medskip
$\forall\Sigma^b_1$ -- {\sf RFN$_1$, TFNP, DisjCoNP}.

\medskip

\subsection{Some related statements}

Several concepts related to our conjectures have been studied. We will present some of these sentences here. We will call them conjectures, since we believe that they are true, but we do not have essentially any supporting argument for their truth.

We have observed that Conjecture~{\sf CON$^N$} can be strengthened to Conjecture~{\sf DisjNP}. Its uniform version, Conjecture~{\sf CON}, can, furthermore, be strengthened in a different way. Recall that {\bf UP}, \emph{unambiguous} {\bf P}, is the class of languages that are accepted by polynomial time \emph{nondeterministic} Turing machines that satisfy the property that for every accepted input, there is a \emph{unique} accepting computation.  K\"obler, Messner and Tor\'an~\cite{KMT} proved that if there exists a p-optimal proof system, then {\bf UP} has a complete set with respect to many-one reductions. Hence the following is a strengthening of Conjecture~{\sf CON}.

\begin{conjecture}{\sf UP}
There is no complete set, with respect to many-one reductions, in {\bf UP}.
\end{conjecture}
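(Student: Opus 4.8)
This statement is a \emph{conjecture}, not a theorem, and unavoidably so: a $\le^p_m$-complete set for {\bf UP} would exist whenever ${\bf UP}={\bf P}$, since every nontrivial language in {\bf P} is $\le^p_m$-complete for {\bf P} (a poly-time reduction may simply decide its input and output a fixed yes/no instance), and ${\bf P}={\bf NP}$ forces ${\bf UP}={\bf P}$ because ${\bf P}\subseteq{\bf UP}\subseteq{\bf NP}$. Hence Conjecture~{\sf UP} implies ${\bf UP}\neq{\bf P}$ and therefore ${\bf P}\neq{\bf NP}$, so it cannot be established unconditionally by present methods. The only honest ``proof'' one can propose is of the same character as the evidence the paper marshals for {\sf DisjNP} and {\sf TFNP}: a \emph{relativized} separation, together with the unconditional structural implications recorded above.

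The plan is to construct an oracle $O$ relative to which ${\bf UP}^O$ possesses no $\le^p_m$-complete set, in direct analogy with the Gla{\ss}er et al.\ oracle that kills complete disjoint {\bf NP} pairs~\cite{GSSZ} and the {\sf TFNP} oracle of~\cite{herbr}. First I would enumerate all candidate complete sets: a candidate is named by an unambiguous polynomial-time oracle machine $N_e$, defining a language $L_e\in{\bf UP}^O$. To refute completeness of $L_e$ it suffices to produce a single language $A_e\in{\bf UP}^O$ that $\le^p_m$-reduces to $L_e$ via no polynomial-time map. I would encode $A_e$ through the oracle — letting $x\in A_e$ be governed by whether a reserved ``flag'' string $s_x$ lies in $O$ — and, by a stagewise diagonalization on disjoint length-blocks, meet each requirement $R_{e,i}$: \emph{the $i$-th polynomial-time function $f_i$ is not a reduction of $A_e$ to $L_e$}. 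One meets $R_{e,i}$ by selecting an $x$ whose flag is still free, computing $f_i(x)$, querying whether $N_e$ accepts $f_i(x)$ under the commitments made so far, and setting $s_x\in O$ to the opposite truth value, so that $x\in A_e\not\Leftrightarrow f_i(x)\in L_e$. Because $f_i$ and $N_e$ run in polynomial time, each requirement inspects and fixes only polynomially many oracle positions, leaving infinitely many fresh blocks for later stages.

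The main obstacle is \emph{preserving unambiguity}, and this is the step I expect to absorb essentially all of the technical effort. In the {\bf NP} case one may freely add accepting witnesses to diagonalize, but here every bit planted into $O$ must keep each previously committed machine $N_{e'}$ unambiguous: toggling an oracle string can create a \emph{second} accepting path of some $N_{e'}$, ejecting $L_{e'}$ from ${\bf UP}^O$ and invalidating the work already done for it. The delicate part is to show the reserved blocks can be chosen sparsely enough, and the commitments made conservatively enough (e.g.\ fixing large contiguous neighbourhoods of each queried string, and charging conflicts in a priority ordering), that no earlier machine's ``unique accepting computation'' promise is ever violated. This is exactly where {\bf UP} is harder than the {\sf DisjNP} setting of~\cite{GSSZ}, and getting a clean combinatorial bookkeeping that reconciles diagonalization with the uniqueness constraint is the crux.

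Finally, I would record the unconditional facts that situate {\sf UP} above {\sf CON}: by the K\"obler--Messner--Tor\'an theorem~\cite{KMT}, the existence of a p-optimal proof system yields a $\le^p_m$-complete set for {\bf UP}; contrapositively, Conjecture~{\sf UP} implies Conjecture~{\sf CON}, so {\sf UP} is a genuine strengthening. Because {\sf CON} is strictly weaker, none of the consistency-based machinery of Section~\ref{sec3} can by itself deliver {\sf UP}, which is precisely why one is thrown back on the relativized construction above as the best available evidence.
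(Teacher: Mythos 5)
You have correctly recognized that the paper offers no proof of Conjecture~{\sf UP}: it is stated purely as a conjecture, and the only surrounding mathematics is the K\"obler--Messner--Tor\'an result that a p-optimal proof system yields a many-one complete set in {\bf UP}, whence {\sf UP} strengthens {\sf CON} --- exactly the two unconditional observations you record. Your derivation that {\sf UP} implies ${\bf UP}\neq{\bf P}$ and hence ${\bf P}\neq{\bf NP}$ is also sound. Where you go beyond the paper is the proposed relativized separation, and there it is worth knowing that the crux you identify has in fact already been overcome in the literature: Hartmanis and Hemachandra (``Complexity classes without machines: on complete languages for {\bf UP}'', Theoretical Computer Science, 1988) constructed an oracle relative to which {\bf UP} has no many-one complete sets, so the evidence you propose to manufacture exists and predates the oracles of \cite{GSSZ} and \cite{herbr} that you cite as models.

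Two technical points in your sketch deserve sharpening. First, {\bf UP} is a semantic (promise) class: one cannot effectively enumerate exactly those oracle machines that are unambiguous relative to the \emph{final} oracle, so each requirement must be met in disjunctive form --- either $N_e$ is forced to be ambiguous on some input (and then $L_e\notin{\bf UP}^O$, so $e$ imposes no requirement) or $f_i$ fails to reduce $A_e$ to $L_e$; your stagewise plan should be phrased this way rather than as diagonalizing only against machines assumed unambiguous. Second, setting $s_x\in O$ to the value opposite to ``$N_e$ accepts $f_i(x)$ under the commitments made so far'' is circular as stated, because the computation of $N_e$ on $f_i(x)$ may query $s_x$ itself and other uncommitted positions; the standard fix is a case analysis over oracle extensions, in which the failure of the diagonalization under both settings of $s_x$ is used to extract two distinct accepting paths of $N_e$ and thereby destroy its unambiguity, feeding the first disjunct. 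Finally, a small overstatement: you call {\sf CON} ``strictly weaker'' than {\sf UP}, but only the implication {\sf UP}$\Rightarrow${\sf CON} is known (via \cite{KMT}), and the paper claims no strictness.
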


So far we only talked about proof systems for {\it TAUT}. In the same way one
can define proof systems and polynomial simulations for any set. In
particular, a \emph{proof system for {\it SAT}} is a polynomial time
computable function from $\Sigma^*$ onto {\it SAT}. There is one
essential difference between proof systems for {\it TAUT} and {\it
  SAT}---the latter does have polynomially bounded proof systems. In
fact, the definition of {\it SAT} itself gives one such proof system; in this
system any pair $(\phi,a)$, where $a$ is a satisfying assignment of a formula $\phi$ is a proof (of the
satisfiability of) $\phi$. This is called the \emph{standard} proof
system for {\it SAT}.

Here is an example of a nonstandard proof system $P$ for {\it SAT}. In
$P$ a proof of $\phi$ is either a pair $(\phi,a)$, where $a$ is a  satisfying assignment of $\phi$, or it is
$\phi$ itself in the case when $\phi$ is a proposition $\gamma_n$
expressing, in a natural way, the fact that $n$ is a composite 
number and $n$ is a composite number. Note that in the standard proof system the proof of $\gamma_n$
encodes a nontrivial factor of $n$. Hence, if the standard proof
system p-simulated $P$, then factoring would be in polynomial time.

Beyersdorff et al.~\cite{BKM} proved that the existence of a p-optimal
proof system for {\it SAT} implies the existence of a complete
function in {\bf NPMV}$_t$. 
Hence, by our Proposition~\ref{TFNP-NPMV}, it also implies the existence
of a complete problem in {\bf TFNP}. 
To put the conjecture about complete sets
in {\it SAT} into context, we need the following proposition.

\bpr
Let $S\in\cal T$ be a theory such that for every 
theory $T\in\cal T$, $S$-proofs of $\Sigma_1^bRFN_T(\bar n)$ can be
constructed in polynomial time in $n$. 
Then there exists a p-optimal proof system for {\it SAT}.
\epr

\begin{proof}
  Let $sat(x,y)$ be a $\Delta^b_1$ formula expressing the fact that
  $y$ is a satisfying assignment of a propositional formula $x$.  Suppose
  that $S$ satisfies the assumption of the proposition. We define a
  proof system $P$ for {\it SAT} by:
\[
\mbox{$y$ is a $P$-proof of $x$ $\Leftrightarrow$ $y$ is an $S$-proof
  of $\exists z.sat(\bar{x},z)$.} 
\]
Given a proof system $f$ for {\it SAT}, we take $T\in\cal T$ such that
it proves the soundness of $f$, i.e.,
\bel{e-sound}
T\vdash\ \forall y\exists z.sat(f(y),z).
\ee
By Corollary~\ref{l-rfn}, $S$-proofs of $\exists z.sat(f(\bar d),z)$
can be constructed in polynomial time for every~$d$. Thus, given an
$f$-proof $d$ of $f(d)$, we can construct in polynomial time a proof
in~$P$. Hence $P$ is a p-optimal proof system for {\it SAT}.
\end{proof}

Conjectures {\sf DisjNP, DisjCoNP} and {\sf UP} are related to our main conjectures {\sf CON} and {\sf TFNP}. Here is an example of a plausible conjecture that is apparently incomparable with {\sf CON} and {\sf TFNP}.

\begin{conjecture}{\sf NP$\cap$coNP}
There is no complete set in ${\bf NP\cap coNP}$.
\end{conjecture}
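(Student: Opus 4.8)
The plan is to fix, first, the notion of reduction: for a single set the natural choice is polynomial-time many-one reducibility $\leq^p_m$, so a set $L$ is complete when $L\in{\bf NP\cap coNP}$ and $A\leq^p_m L$ for every $A\in{\bf NP\cap coNP}$. Before attacking the conjecture itself I would record the one cleanly provable fact that fixes its difficulty. If ${\bf NP}={\bf coNP}$, then ${\bf NP\cap coNP}={\bf NP}$, and the latter has a $\leq^p_m$-complete set (for instance {\it SAT}); hence the conjecture implies ${\bf NP}\neq{\bf coNP}$, and a fortiori ${\bf P}\neq{\bf NP}$. This already rules out an unconditional proof by present means, so the realistic target is to recast the conjecture as a statement about unprovability and to supply a plausibility argument, exactly as Propositions~\ref{pr-6.1} and~\ref{pr-6.2} do for the other conjectures of this paper.

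The central step I would carry out is an equivalence mirroring Proposition~\ref{pr-6.2}: \emph{there is a $\leq^p_m$-complete set in ${\bf NP\cap coNP}$ if and only if there exists a theory $T\in\cal T$ such that every $L\in{\bf NP\cap coNP}$ has a $\Sigma^b_1$ definition $\sigma_L$ and a $\Pi^b_1$ definition $\pi_L$ for which $T\vdash\forall x(\sigma_L(x)\equiv\pi_L(x))$}. For the forward direction, take a complete set $A$ with $\Sigma^b_1$ definition $\alpha$ and $\Pi^b_1$ definition $\alpha'$, put $T:=S^1_2+\forall y(\alpha(y)\equiv\alpha'(y))$, and for $L\leq^p_m A$ via a polynomial-time $f$ set $\sigma_L(x):=\alpha(f(x))$ and $\pi_L(x):=\alpha'(f(x))$; instantiating the axiom at $y=f(x)$ gives the required $T$-proof, and the two formulas define $L$ correctly. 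For the converse I would use the combining construction of Lemma~\ref{l-4.1}, enumerating tuples $(x,\sigma,\pi,d,a)$ in which $d$ is a $T$-proof of $\forall x(\sigma(x)\equiv\pi(x))$ and $a$ a syntactic time bound, accepting via $\sigma$ and rejecting via $\pi$ so that the resulting set lies in ${\bf NP\cap coNP}$ and is $\leq^p_m$-universal. With this equivalence in hand, the conjecture becomes the clean unprovability statement that no single $T\in\cal T$ proves all such $\Delta^b_1$-equivalences for the whole class.

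The hard part will be the unprovability side, and I do not expect to settle it with available methods. The obstruction is structural: the existence of such a $T$ — equivalently of a $\leq^p_m$-complete set — relativizes in both directions, since there are oracles making ${\bf NP\cap coNP}$ possess a complete set and oracles making it lack one, so any genuine proof must be non-relativizing, and, because the conjecture entails ${\bf NP}\neq{\bf coNP}$, it would have to deliver a notorious separation and more. The heuristic reason to believe the conjecture is that ${\bf NP\cap coNP}$ is a \emph{semantic} rather than syntactic class: there is no recursive enumeration of pairs of nondeterministic machines guaranteed to define complementary languages, and a complete set would manufacture precisely such an enumeration, paralleling the situation for Conjecture~{\sf DisjCoNP}. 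The honest deliverable of this plan is therefore the equivalence above together with the semantic-class heuristic as supporting evidence, leaving the statement itself a genuine conjecture, consistent with its status here.
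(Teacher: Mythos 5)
The statement you were asked about is one of the paper's \emph{conjectures}: the paper offers no proof of it and cannot, since (as you correctly observe) it implies ${\bf NP}\neq{\bf coNP}$. The paper merely situates it in its implication diagram via the result of Beyersdorff, K\"obler and Messner that p-optimal proof systems for both {\it TAUT} and {\it SAT} yield a complete set in ${\bf NP}\cap{\bf coNP}$, so that the conjecture implies {\sf CON}$\vee${\sf SAT}. You were therefore right not to attempt a proof, and your main deliverable --- the equivalence between the existence of a $\leq^p_m$-complete set in ${\bf NP}\cap{\bf coNP}$ and the existence of a single $T\in\cal T$ proving $\forall x(\sigma_L(x)\equiv\pi_L(x))$ for suitable $\Sigma^b_1/\Pi^b_1$ definitions of every $L$ in the class --- is exactly the analogue of Propositions~\ref{pr-6.1} and~\ref{pr-6.2} that the paper says should exist for such conjectures but does not carry out for ${\bf NP}\cap{\bf coNP}$. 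Your forward direction is fine (the axiom $\forall y(\alpha(y)\equiv\alpha'(y))$ is true in $\N$, so $T\in\cal T$, and instantiation at $f(x)$ works since $S^1_2$ proves the totality of the polynomial-time $f$). Your incidental claims are also accurate: the direct implication ${\bf NP}\neq{\bf coNP}$ is sharper than the ${\bf P}\neq{\bf NP}$ recorded in the paper's diagram, and oracles exist in both directions (Sipser's construction for non-existence, any oracle collapsing ${\bf NP}$ and ${\bf coNP}$ for existence).

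One step in your converse direction deserves to be made explicit, because without it the construction is incomplete: a $T$-proof $d$ of $\forall x(\sigma(x)\equiv\pi(x))$ must guarantee that this equivalence is \emph{true} in $\N$, or else the enumerated universal set, ``accepting via $\sigma$ and rejecting via $\pi$,'' need not lie in ${\bf NP}\cap{\bf coNP}$ at all --- the two machines could fail to be complementary on tuples built from a false but $T$-provable equivalence. The justification is the same soundness argument the paper uses inside the proof of Lemma~\ref{l-4.1}: since $\sigma$ and $\pi$ are bounded formulas, both implications $\forall x(\sigma(x)\to\pi(x))$ and $\forall x(\pi(x)\to\sigma(x))$ are $\Pi_1$ sentences, and every consistent $T\supseteq S^1_2$ is $\Pi_1$-sound by $\Sigma_1$-completeness, so $T$-provability of the equivalence implies its truth. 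With that sentence added, your equivalence stands, and your overall assessment --- that the statement remains a genuine conjecture supported by the semantic-class heuristic, in parallel with {\sf DisjCoNP} --- matches the paper's stance.
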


Beyersdorff et al.~\cite{BKM} proved that if both {\it TAUT} and {\it
  SAT} have p-optimal proof systems, then there exists a complete set in
${\bf NP\cap coNP}$. Hence Conjecture~{\sf NP$\cap$coNP} is above
Conjecture~{\sf RFN$_1$}. 

The implications between the most important uniform conjectures considered in this paper are
depicted in the figure below. Recall that {\sf CON} is equivalent to the nonexistence of a p-optimal proof system for {\it TAUT}.

{\footnotesize%\scriptsize
\[\xymatrix@C=5mm{            % @C = column spacing @R = row spacing
  &&&&& {\sf DisjCoNP}\ar[ld]\\
 {\sf DisjNP}\ar[rd]&{\sf UP}\ar[d]& & & {\sf TFNP}\ar[ld]&\\
 & {\sf CON} \ar[rd]&{\sf NP}\cap{\sf coNP}\ar[d] & {\sf SAT}\ar[ld]&&\\
 & & {\sf CON}\vee{\sf SAT}\ar[d]&&&\\
& & {\sf RFN}_1\ar[d]&&&\\
& &{\bf P}\neq{\bf NP}&&&
}\]
}

\subsection{Towards general conjectures}\label{sec-general}

We will focus on uniform conjectures, because the situation there seems to be clearer.
We have seen that our uniform conjectures are statements about unprovability of particular
sentences. The structure of these sentences is determined by
\ben
\item some class $\cal C$ of sentences 
\item associated with computational
  problems $\cal P$, and 
\item some complexity hierarchy $\cal H$ of the associated problems.
\een
The conjectures say that the more difficult the associated
computational problem is, the more difficult is to prove the
sentence. 

Consider, for example, Conjecture~{\sf DisjNP}. In this conjecture we have 
sentences expressing that two sets defined by
$\Sigma^b_1$ sentences are disjoint. These sentences are of the form: 
\bel{e-d}
\forall x(\neg\phi(x)\vee\neg\psi(x)),
\ee
where $\phi$ and $\psi$ are $\Sigma^b_1$ sentences defining the two sets. These sentences are equivalent to universally quantified $\Pi^b_1$ sentence, but they are just some specific universally quantified $\Pi^b_1$ sentences. For such sentences, a natural task is, for a given $x$, to decide which of the two $\neg\phi(x)$ or $\neg\psi(x)$ is true. The complexity hierarchy of the computational problems is defined using polynomial time reductions. 

Consider Conjecture~{\sf CON}. In the equivalent form of this conjecture given by Proposition~\ref{pr-6.1}, the sentences expressing that a propositional proof system $P$ is sound are also universally quantified $\Pi^b_1$ sentences. They have the form
\bel{e-f}
\forall x,y,z({ proof}_P(x,y)\to sat(x,z)),
\ee
where $proof_P(x,y)$ is a $\Delta^b_1$ formula expressing that $y$ is
a $P$-proof of $x$. The structure of sentences (\ref{e-d}) and (\ref{e-f}) is similar (essentially, they are universally quantified disjunctions), but
the length of $y$ in the second formula is not polynomially bounded in the length of $x$.  
% We associate with these formulas the same computational task, but 
Furthermore, we use a different kind or reductions to define the hierarchy: in the first case, a reduction can map $x$ to another element, but we do not care about the witnesses of the $\Sigma^b_1$ formulas; in the
second case, $x$ does not change, but we map a witness $y$ to another
witness.  

Ideally, we would like to state a general conjecture from which our current conjectures would follow as special cases. However, to be able to do that, we first need to fully understand
what are the classes $\cal C$ whose sentences can be associated with computational tasks,
what are the computational problems $\cal P$, and 
what are the complexity hierarchies $\cal H$.
So far we only have examples.

%% What about other universally
%% quantified $\Pi^b_1$ sentences? Can we always find 
%% some natural
%% computational task associated with them?

%% But there is an essentially different way of viewing these
%% statements: we can view them as new axioms, e.g., like large cardinal axioms
%% in set theory. If we view them in this way, then tere is no hope to
%% find one general principle.

%% Instead of looking for the strongest conjectures, it may be better to look for a general ``metaconjecture'' that would
%% specify for which classes $\cal C$, problems $\cal P$ and hierarchies
%% $\cal H$ such statements should be true.

\section{The role of reductions}\label{sec8}

In Propositions~\ref{pr-6.1} and~\ref{pr-6.2} we saw that conjectures whose statements used reductions can be equivalently stated without referring to any concept of polynomial reduction. In this section we will explain how polynomial reductions naturally appear when we compare the logical strength of sentences.

When we are comparing sentences from some class $\cal C$, we do it
with respect to a specific base theory $T$. Thus for some $\Phi,\Psi\in\cal
C$, we are asking whether $T\vdash\Phi\to\Psi$. One can show that at
least for some type of sentences and some theory $T$, the provability
implies the existence of a reduction.

Let the base theory be $S^1_2$ and the sentences have the form
$\forall x\exists^py.\phi(x,y)$, where $\phi$ is $\Sigma^b_1$. We will
show that provability of one sentence from the other implies the
existence of a polynomial reduction of one {\bf TFNP} problem to the
other. The following is a well-known fact (see~\cite{hanika}), but we
will still give a proof, because we want to argue that it can be
generalized to stronger theories.

\bpr
Suppose that $\N\models\forall x\exists^py.\phi(x,y)\wedge\forall u\exists^pv.\psi(u,v)$ and 
\bel{pr-red}
S^1_2\vdash \forall x\exists^py.\phi(x,y)\to\forall u\exists^pv.\psi(u,v),
\ee
where $\phi$ and $\psi$ define polynomial time computable relations. Then the {\bf TFNP} problem defined by $\psi$ is polynomially reducible to the {\bf TFNP} problem defined by $\psi$.
\epr
\bprf
This proposition is an immediate consequence of the following result (see~\cite{pudlak92}).
\bl
If $S^1_2\vdash \forall x\exists^p y\forall^p z.\alpha(x,y,z)$, 
%\marginpar{potrebujeme mit kvantory omezene? Ne, staci omezit z.}
where $\alpha$ is $\Pi^b_0$, then for a given $x$, one can compute $y$ such that $\forall^p z.\alpha(x,y,z)$ using a polynomial time oracle Turing machine with any oracle that, for a given $x$ and $y$ such that $\exists^p z.\neg\alpha(x,y,z)$ holds true, produces some $z$ such that $\neg\alpha(x,y,z)$ holds true.
\el
Write the implication in~(\ref{pr-red}) in the following prenex form
\[
\forall u\exists x\exists v\forall y(\phi(x,y)\to\psi(u,v)).
\]
By the lemma, there is a polynomial time Turing machine $M$ that computes $x$ and $v$ from a given $u$ using any oracle  that whenever $\exists y(\phi(x,y)\wedge\neg\psi(u,v))$ holds true produces a witness for $y$. We want to use an oracle that only produces witnesses for  $\exists y.\phi(x,y)$. Clearly, such an oracle suffices. If $M$ asks a query $(x,v)$ such that $\psi(u,v)$ is true, then we can stop, because we already have a witness for $\exists^pv.\psi(u,v)$. If no such query occurs during the computation of $M$, then the oracle must always produce a witness for $\exists y.\phi(x,y)$, hence  we get $x$ and $v$ such that $\forall y(\phi(x,y)\to\psi(u,v))$ is true, which is equivalent to $\exists y.\phi(x,y)\to\psi(u,v)$. But the antecedent is always true, so we have $\psi(u,v)$.
\eprf

If the base theory $T$ is stronger than $S^1_2$, we believe that
we nevertheless get some class of reductions that is probably stronger
than polynomial time computable reductions, but still somewhat restricted so that the classes of {\bf TFNP} equivalent with respect to these reductions do not completely collapse. These reductions should be defined
using the provably total search problems of~$T$. 
The idea is that the provably total polynomial search problems of $S^1_2$ are the problems solvable in polynomial time and this gives us reductions that are polynomial time computations with oracle queries to which we substitute solutions of the problem to which we are reducing the given problem. Similarly, if $\cal S$ is the class of provably total polynomial search problems of $T$, then provability in $T$ should give us reductions that are problems from $\cal S$ with oracle queries. A special case of this appeared in~\cite{BKT14} (not quite explicitly) where the theory was $T^1_2$ and the class of search problems was {\bf PLS}. Although it may be interesting to study such reductions in general, we believe that they would give the same conjectures if used instead of polynomial reductions.

%% What if there are complete problems (in {\bf TFNP}, disjoint pairs
%% etc.)? Answer: then the impossibility to prove the sentences may be
%% due to the complexity of \emph{reductions}.

\section{Conclusions and open problems}

In this paper we put forward the thesis that there exists a connection between the complexity of problems associated with first order sentences and their logical strength manifested as impossibility of proving them in weak theories. If we interpret this thesis in a broad sense, then the thesis is true; e.g., we cannot prove in a weak theory that some computation stops if the problem requires extremely long time to be solved. However, our argument here is that there may be such a connection already on the very low level, namely in the domain of problems solvable in nondeterministic polynomial time. Since the current state of research into such low complexity classes does not have the means to prove separations, we can only state and compare hypotheses about such a connection. 

There are two basic conjectures which have equivalent formulations and come in some flavors. The first one is about finite consistency statements and was proposed already a long time ago~\cite{KP}. The second one is more recent and concerns provably total polynomial search problems. We showed how they are related to some weaker statements and some stronger ones. Some of these statements had already been studied before. There are still many problems that need to be solved if we want to fully understand this topic; some are of a fundamental nature, some are more specific. Some problems have already been mentioned in previous sections. Below we briefly mention some more.

\ben
\item The main problem, mentioned in Subsection~\ref{sec-general}, is to find a general conjecture about incompleteness and computational complexity. The conjectures we studied in this paper should be special cases of it.
\item More specifically, propose a natural and plausible conjecture that implies the two main  Conjectures~{\sf CON} and~{\sf TFNP}, or prove that one of these conjectures implies the other, or show that their relativizations are independent. 
%\item Propose a natural and plausible conjecture stronger than Conjectures~\ref{c6} or~\ref{c8}.
\item Construct an oracle with respect to which Conjecture~{\sf DisjCoNP} is true. Construct oracles that show that relativized conjectures are different or show that they are equivalent for pairs of conjectures presented in this paper. Apparently the only separation that is known is a separation of Conjectures~{\sf CON} and~{\sf DisjNP}, see~\cite{GSSZ}.
\item In order to get more evidence for Conjecture~{\sf TFNP}, characterize provably total polynomial search problems in stronger systems of Bounded Arithmetic. The strongest theory for which a combinatorial characterization has been found is $V^1_2$, see~\cite{KNT11,BB14}.
\item Characterize more canonical pairs of propositional proof systems in order to get more evidence for Conjecture~{\sf DisjNP}. A combinatorial characterization of the canonical pair has only been found for Resolution, see~\cite{BPT14}. Characterize canonical pairs of some total polynomial search problems (as defined in this paper) in order to get some evidence for Conjecture~{\sf DisjCoNP}. Nothing is known in this direction.
\item We would also be interested in seeing connections between the non-existence of complete problems in some probabilistic classes and our main conjectures. K\"obler et al~\cite{KMT} proved that if {\it TAUT$_2$} (or {\it SAT$_2$}) have a p-optimal proof system, then {\bf BPP}, {\bf RP} and {\bf ZPP} have many-one complete problems. ({\it TAUT$_2$} and {\it SAT$_2$} are the sets of $\Pi_2$ and $\Sigma_2$ quantified Boolean tautologies.) But most researchers believe that these probabilistic classes do have complete problems, because they are in fact equal to {\bf P}.
\item Another important subject are \emph{proof-complexity generators} of Kraj\'{\i}\v{c}ek and Razborov (see~\cite{krajicek11,razborov15} and the references therein). One conjecture about proof-complexity generators states that they are hard for every proof system. It would be very interesting to find connections to our conjectures.
\een

\subsection*{Acknowledgment}
I would like to thank Pavel Hrube\v{s}, Emil Je\v{r}\'abek, Jan Kraj\'{\i}\v{c}ek and Neil Thapen for their useful comments on the draft of this paper. I am also indebted to an anonymous referee for pointing out many small errors and suggesting improvements of presentation.

%\section{Computational content of $\forall\bf P$ sentences}

\end{document}